\theoremstyle{plain}
\newtheorem{theorem}{Theorem}[section]
\newtheorem{corollary}[theorem]{Corollary}
\newtheorem{proposition}[theorem]{Proposition}
\newtheorem{lemma}[theorem]{Lemma}
\newtheorem*{theorem*}{Theorem}
\theoremstyle{remark}
\newtheorem{remark}[theorem]{Remark}
\theoremstyle{definition}
\newtheorem{definition}[theorem]{Definition}
\newtheorem{example}[theorem]{Example}
\numberwithin{equation}{theorem}
\newcommand{\ZZ}{\mathbb{Z}}
\newcommand{\m}{\mathfrak{m}}
\newcommand{\n}{\mathfrak{n}}
\newcommand{\p}{\mathfrak{p}}
\newcommand{\q}{\mathfrak{q}}
\newcommand{\Ass}{\operatorname{Ass}}
\newcommand{\Spec}{\operatorname{Spec}}
\newcommand{\Min}{\operatorname{Min}}
\newcommand{\Max}{\operatorname{Max}}
\newcommand{\Supp}{\operatorname{Supp}}
\newcommand{\Ann}{\operatorname{Ann}}
\newcommand{\Var}{\operatorname{Var}}
\newcommand{\Jac}{\operatorname{Jac}}
\newcommand{\im}{\operatorname{im}}
\newcommand{\Hom}{\operatorname{Hom}}
\newcommand{\inj}{\operatorname{inj}}
\newcommand{\spl}{\operatorname{spl}}
\newcommand{\sur}{\operatorname{sur}}
\newcommand{\rank}{\operatorname{rank}}
\newcommand{\Tor}{\operatorname{Tor}}
\newcommand{\coker}{\operatorname{coker}}
\newcommand{\GL}{\operatorname{\textbf{GL}}}
\newcommand{\Pic}{\operatorname{Pic}}
\begin{document}

\title[Surjective and Splitting Capacities]{Surjective and Splitting Capacities}
\author{Robin Baidya}
\email{rbaidya1@gsu.edu}
\address{Department of Mathematics and Statistics, Georgia State University, Atlanta, Georgia 30303}
\curraddr{}
\thanks{This paper constitutes a portion of a dissertation~\cite{Bai3} that will be submitted to the Department of Mathematics and Statistics at Georgia State University in partial fulfillment of the requirements for the degree of Doctor of Philosophy.  The author would like to thank his advisor Yongwei Yao for suggesting the problems considered in this paper, for proposing several definitions that ultimately led to solutions of these problems, and for reviewing many early drafts of this work.  The author would also like to thank Florian Enescu for his helpful comments during a presentation of some of these results at the algebra seminar at Georgia State University.}
\date{\today}
\keywords{basic set, splitting capacity, surjective capacity} 
\subjclass[2010]{Primary 13E05; Secondary 13C05, 13D15, 13E15, 13F05}

\begin{abstract}
Let $R$ be a commutative ring, $S$ a module-finite $R$-algebra, $M$ a right $S$-module, and $N$ a finitely generated right $S$-module such that $\Max(R)\cap\Supp_R(N)$ is finite-dimensional and Noetherian.  Working under various combinations of additional hypotheses on $R$, $M$, and $N$, we give lower bounds on the \textit{global surjective capacity of $M$ with respect to $N$ over $S$}, that is, the supremum of the nonnegative integers $t$ such that $N^{\oplus t}$ is a direct summand of a quotient $S$-module of $M$.  We express our lower bounds in terms of local analogues of global surjective capacity and topological properties of $\Spec(R)$.  Assuming that $N$ is finitely presented over $S$, we also give lower bounds on the \textit{global splitting capacity of $M$ with respect to $N$ over $S$}, that is, the supremum of the nonnegative integers $t$ such that $N^{\oplus t}$ is a direct summand of $M$.  In the process, we generalize Serre's Splitting Theorem from algebraic $K$-theory and a theorem from stable algebra due to Bass.  We also generalize a theorem on Noetherian modules by De~Stefani, Polstra, and Yao that serves as an analogue of an older result by Stafford.  To close, we consider the case of finitely generated modules over a Dedekind domain; in this case, we show that we can provide conditions equivalent to having a given global surjective or splitting capacity.
\end{abstract}
\commby{}
\maketitle

\setcounter{section}{-1} 

\section{Introduction}\label{sec:intro}

In this paper, every ring has a 1, and every module is unital.

Let $R$ be a commutative ring, and let $M$ and $N$ be $R$-modules.  It is well known that an $R$-linear map $f:M\rightarrow N$ is surjective if and only if $f_{\p}$ is surjective for every $\p\in\Spec(R)$.  Suppose now that, for every $\p\in\Spec(R)$, there is a surjective $R_{\p}$-linear map from $M_{\p}$ to $N_{\p}$.  Can we conclude that there is a surjective $R$-linear map from $M$ to $N$?

In general, the answer is \textit{no}.  For example, suppose that $R$ is a Dedekind domain with a nonprincipal ideal $I$.  Then $R_{\p}\cong I_{\p}$ for every $\p\in\Spec(R)$, but there is neither a surjective $R$-linear map from $R$ to $I$ nor a surjective $R$-linear map from $I$ to $R$.

How might we strengthen our hypothesis on $R$, $M$, and $N$, then, to be able to conclude that there is a surjective $R$-linear map from $M$ to $N$ or, more generally, from $M$ to $N^{\oplus t}$ for a given positive integer $t$?  Before considering this question further, we introduce some definitions and notation:

\begin{definition}\label{definition:sur}
Let $R$ be a commutative ring, $S$ an $R$-algebra, and $M$ and $N$ right $S$-modules.  We let $\sur_S(M,N)$ denote the supremum of the nonnegative integers $t$ such that there exists a surjective $S$-linear map from $M$ to $N^{\oplus t}$, and we refer to $\sur_S(M,N)$ as the \textit{global surjective capacity of $M$ with respect to $N$ over $S$}.

Let $\p\in\Spec(R)$.  We refer to $\sur_{S_{\p}}(M_{\p},N_{\p})$ as the \textit{local surjective capacity of $M$ with respect to $N$ over $S$ at $\p$}.
\end{definition}

Under the hypotheses of the previous definition, we observe that we can always get an upper bound on a global surjective capacity in terms of local surjective capacities:
\[
\sur_S(M,N)\leqslant\inf\{\sur_{S_{\m}}(M_{\m},N_{\m}):\m\in\Max(R)\cap\Supp_R(N)\}.
\]
Can we also get a lower bound on a global surjective capacity using local surjective capacities?  Bass gives an answer to this question in his work on stable algebra~\cite[Theorem~8.2]{Bas}.  In the sequel, we endow every subset $Y$ of $\Spec(R)$ with the subspace topology induced by the Zariski topology on $\Spec(R)$, and we define $\dim(Y)$ to be the Krull dimension of $Y$ as a topological space.  We define $\dim(\varnothing)=-\infty$.

\begin{theorem}[Bass~{\cite[Theorem~8.2]{Bas}}]\label{theorem:Bas}
Let $R$ be a commutative ring, $S$ a module-finite $R$-algebra, and $M$ a direct summand of a direct sum of finitely presented right $S$-modules.  Suppose that $Y:=\Max(R)$ is Noetherian with $\dim(Y)<\infty$.  Suppose also that $\sur_{S_{\m}}(M_{\m},S_{\m})\geqslant 1+\dim(Y)$ for every $\m\in Y$.  Then $\sur_S(M,S)\geqslant 1$.
\end{theorem}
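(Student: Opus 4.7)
The goal is to produce a single surjective $S$-linear map $f\colon M\to S$.  My plan is to reduce surjectivity of $f$ to a mod-$\m$ condition at each $\m\in Y$ via Nakayama's lemma, and then to build $f$ by Noetherian induction on a closed ``failure locus,'' exploiting the extra degrees of freedom supplied by the $1+d$ surjections hypothesized at each maximal ideal.

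Since $S$ is module-finite over $R$, Nakayama's lemma gives that for any $f\in F:=\Hom_S(M,S)$ and any $\m\in Y$, the localization $f_\m\colon M_\m\to S_\m$ is surjective if and only if the reduction $M/\m M\to S/\m S$ is.  Define $Z(f):=\{\m\in Y : f \text{ is not surjective at } \m\}$.  I would verify that $Z(f)$ is closed in $Y$ by first reducing to the finitely presented case---any surjection $M\to S$ requires only finitely many elements of $M$, so it factors through a finite sub-sum of the direct sum of finitely presented modules that $M$ summands into---and then identifying $Z(f)$ as the support of $\coker(f)$, cut out by the $0$-th Fitting ideal.  The goal is then to produce $f\in F$ with $Z(f)=\varnothing$.

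The heart of the argument is an inductive step: given $f\in F$ with $Z(f)\neq\varnothing$, produce $f'\in F$ with $Z(f')\subsetneq Z(f)$.  Pick a maximal point $\m$ of $Z(f)$ and use the hypothesis $\sur_{S_\m}(M_\m,S_\m)\geqslant 1+d$ to obtain $g_0,\dots,g_d\in F$ (after clearing denominators, using that $\Hom_S(-,S)$ commutes with localization for finitely presented modules) whose joint localization $M_\m\to S_\m^{\oplus(1+d)}$ is surjective.  I then look for a correction of the form $f'=f+\sum_{i=0}^{d}r_ig_i$ with $r_i\in R$ chosen so that (i)~$f'$ is surjective at $\m$, and (ii)~$f'$ remains surjective at every $\m'\in Y\setminus Z(f)$ where $f$ already was.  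Condition~(ii) amounts to a prime-avoidance problem at finitely many maximal ideals, while condition~(i) is the statement that a unimodular row of length $d+1$ at $\m$ can be compressed, via $R$-linear combinations, to a single unimodular element.  Iterating this step and appealing to the Noetherian hypothesis on $Y$ will terminate the process, giving the desired $f$.

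The main obstacle is achieving (i) subject to (ii) simultaneously---the general-position step.  This is precisely where the bound $1+d$ (rather than merely $1$) is essential: the finitely many maximal ideals one must avoid can be stratified into at most $d$ layers by dimension, and the $d+1$ independent surjections at $\m$ provide exactly the degrees of freedom needed to dodge each layer, in the spirit of Bass's estimate of $1+\dim\Max(R)$ on the stable range of $R$.
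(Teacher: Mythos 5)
Your high-level plan---reducing surjectivity to a mod-$\m$ condition by Nakayama, observing the failure locus is closed (as the support of $\coker f$), and running a Noetherian induction to empty it---is in the spirit of Bass's original stable-range argument, and the paper does not prove Theorem~\ref{theorem:Bas} directly but as the special case $N=S$, $t=1$ of Theorem~\ref{theorem:sur-Bas}. However, there is a genuine gap in your inductive step. You assert that condition~(ii), keeping $f'=f+\sum_i r_i g_i$ surjective wherever $f$ already was, ``amounts to a prime-avoidance problem at finitely many maximal ideals.'' In general it does not: $Y\setminus Z(f)$ can be infinite. For example, if $R$ is a Dedekind domain with infinitely many maximal ideals then $\dim(Y)=1$ and $Y\setminus Z(f)$ is infinite whenever $Z(f)\subsetneq Y$. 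Trying to force (ii) by taking the $r_i$ in $\bigcap_{\m'\notin Z(f)}\m'$ typically makes that intersection zero, so $f'=f$. Without (ii) you cannot conclude $Z(f')\subseteq Z(f)$, the collection $\{Z(f):f\in F\}$ has no monotonicity, and the Noetherian minimality argument does not close. The parenthetical appeal to ``stratifying into $d$ layers by dimension'' is also vacuous over $Y=\Max(R)$, where $\dim_Y(\m)=0$ for every $\m$: the dimension stratification that actually bites in this circle of ideas takes place over $j$-$\Spec(R)$, not over $\Max(R)$ alone.

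The paper routes around the problem by changing the quantity being inductively controlled. Lemma~\ref{lemma:sur-fg} tracks, for a \emph{finitely generated} $R$-submodule $G$ of $\Hom_S(M,N)$, the closed set $Y(G)=\{\p\in X:\partial_\p(G)\leqslant u\}$. Enlarging $G$ can only shrink $Y(G)$---exactly the monotonicity your $Z(f)$'s lack---so Noetherian minimality produces one finitely generated $F$ with $\partial_\p(F)\geqslant 1+\dim(X)$ uniformly over $X:=j$-$\Spec(R)\cap\Supp_R(N)$. This also resolves the issue you gesture at with ``clearing denominators'': the local surjections at different $\m$ factor through different finite sub-sums of the presentation of $M$, and Lemma~\ref{lemma:sur-fg} glues them into a single $F$ rather than a single finitely presented replacement of $M$. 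Then, rather than correcting one $f$, the paper reduces an $n$-tuple of generators of $F$ to an $(n-1)$-tuple via the Surjective Lemma (Lemma~\ref{lemma:sur}), preserving only the weaker $(t,X,X)$-surjectivity invariant rather than full surjectivity at every point. The finiteness you were hoping for is supplied by Lemma~\ref{lemma:sur-Lambda}: a finite set $\mathit\Lambda\subseteq X$ controls the entire function $\p\mapsto\partial_\p(F)$, and Lemma~\ref{lemma:sur-reduction} shows the reduction need only be verified on $\mathit\Lambda$. That finite $\mathit\Lambda$ is the technical device your sketch is missing, and without something playing its role the general-position step cannot simultaneously satisfy (i) and (ii).
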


We generalize this theorem in two ways:  We replace the module $S$ with an arbitrary finitely generated right $S$-module $N$, and we change the number 1 to an arbitrary positive integer $t$.  With the help of these modifications, we get two new conclusions.

\begin{theorem}\label{theorem:sur-Bas}
Let $R$ be a commutative ring, $S$ a module-finite $R$-algebra, $M$ a direct summand of a direct sum of finitely presented right $S$-modules, and $N$ a finitely generated right $S$-module.  Suppose that $Y:=\Max(R)\cap\Supp_R(N)$ is Noetherian with $\dim(Y)<\infty$.  Then the following statements hold:
\begin{enumerate}
\item Let $t$ be a positive integer, and suppose that $\sur_{S_{\m}}(M_{\m},N_{\m})\geqslant t+\dim(Y)$ for every $\m\in Y$.  Then $\sur_S(M,N)\geqslant t$.
\item $\sur_S(M,N)=\infty$ if and only if $\sur_{S_{\m}}(M_{\m},N_{\m})=\infty$ for every $\m\in Y$.
\item Suppose that $\sur_{S_{\n}}(M_{\n},N_{\n})<\infty$ for some $\n\in Y$.  Then 
\[
\sur_S(M,N)\geqslant \min\{\sur_{S_{\m}}(M_{\m},N_{\m}):\m\in Y\}-\dim(Y).
\]
\end{enumerate}
\end{theorem}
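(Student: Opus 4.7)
My plan is first to deduce parts (2) and (3) from part (1), then to tackle (1).

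For (3): set $c := \min\{\sur_{S_{\m}}(M_{\m},N_{\m}) : \m \in Y\}$, which is finite by hypothesis. If $c \leqslant \dim(Y)$, the asserted bound $c - \dim(Y)$ is nonpositive and the conclusion is vacuous. Otherwise $t := c - \dim(Y) \geqslant 1$ makes the hypothesis of (1) hold, so $\sur_S(M,N) \geqslant t$ follows from (1).

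For (2): the forward direction is immediate because localization sends surjections to surjections. For the reverse, if every local capacity is $\infty$, then the hypothesis of (1) is satisfied for every positive integer $t$, so $\sur_S(M,N) \geqslant t$ for all $t$, giving $\sur_S(M,N) = \infty$.

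To prove (1), I would proceed by induction on $d := \dim(Y)$ while keeping $t$ as a parameter. For the template case $t = 1$, the statement is a direct generalization of Bass's theorem above, in which $S$ is replaced by an arbitrary finitely generated right $S$-module $N$ and $\Max(R)$ by $Y = \Max(R) \cap \Supp_R(N)$. Adapting Bass's approach, I would introduce an analog of his \emph{basic elements}: an \emph{$N$-basic homomorphism at $\p$} is an $f \in \Hom_S(M,N)$ whose localization $f_{\p}$ is surjective. In the base case $d = 0$, $Y$ is finite and discrete, so a global homomorphism $M \to N$ can be patched from the given local surjections using the semi-local structure of $R$ modulo the appropriate Jacobson-type ideal. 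In the inductive step, one uses prime avoidance to produce a homomorphism $f: M \to N$ that is $N$-basic at each of the finitely many maximal elements of $Y$ of height $d$, and then invokes the inductive hypothesis on the complementary subset of $Y$, which has dimension $< d$.

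For the general case $t \geqslant 1$, my plan is to carry through this double induction by constructing $t$ homomorphisms $f_1, \ldots, f_t \in \Hom_S(M,N)$ simultaneously, in the spirit of the Eisenbud--Evans refinement of Forster--Swan. The extra $d$-slack in the hypothesis is used to modify the $f_i$ iteratively so that together they give a global surjection $(f_1, \ldots, f_t): M \to N^{\oplus t}$. I expect the \emph{main obstacle} to lie in the inductive step from dimension $d$ to dimension $d - 1$: modifications aimed at securing surjectivity at the height-$d$ maximal elements of $Y$ must not destroy the surjectivity conditions already arranged at primes of lower dimension. Handling this calls for a careful bookkeeping of the $N$-basic conditions imposed on each tuple $(f_1, \ldots, f_t)$, and it is precisely this bookkeeping that forces the $\dim(Y)$-slack in the hypothesis rather than something weaker. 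A naive alternative, namely using the inductive hypothesis on $t - 1$ to obtain a surjection $g: M \to N^{\oplus(t-1)}$ and then seeking $h: M \to N$ with $h|_{\ker g}$ surjective, fails because $\ker g$ need not inherit the structural hypothesis that $M$ is a direct summand of a direct sum of finitely presented $S$-modules, so one cannot reapply the $t = 1$ case to $\ker g$; this is the chief reason the two inductions must be fused.
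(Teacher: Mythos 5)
Your reductions of (2) and (3) to (1) are exactly the paper's, and they are fine. The trouble is all in your plan for (1), where you are sketching an inductive prime-avoidance argument that has several holes, some of which the paper works hard to plug.

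First, you never address the fact that $\Hom_S(M,N)$ need not be finitely generated over $R$. Since $M$ is only assumed to be a direct summand of a (possibly infinite) direct sum of finitely presented $S$-modules, you cannot run a column-reduction or ``iteratively modify $f_1,\ldots,f_t$'' argument on all of $\Hom_S(M,N)$ at once. The paper first proves a lower-semicontinuity result (its Lemma~\ref{lemma:sur-closed}) and then uses the Noetherianness of $Y$ to extract a \emph{finitely generated} $R$-submodule $F$ of $\Hom_S(M,N)$ whose localized capacities are already large enough at every point of the relevant space (its Lemma~\ref{lemma:sur-fg}). Without this step your ``bookkeeping'' has no finite carrier.

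Second, your inductive scheme works directly with $Y=\Max(R)\cap\Supp_R(N)$, and you speak of the ``finitely many maximal elements of $Y$ of height $d$.'' That phrase does not parse: every element of $Y$ is a closed point of $Y$ (all are maximal ideals, none specialize to another), yet $Y$ need not be finite or discrete when $\dim(Y)\geqslant 1$. The irreducible closed subsets of $Y$ that give $Y$ its dimension typically have no generic point inside $Y$, so neither ``height in $Y$'' nor a descent along chains of points of $Y$ is available. The paper avoids this by replacing $Y$ with the basic set $X := j\textnormal{-}\Spec(R)\cap\Supp_R(N)$, which has the same dimension (by Swan), and crucially has the property that every closed irreducible subset has a unique generic point (Proposition~\ref{proposition:basic}). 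It is on $X$, not on $Y$, that the finite controlling set $\mathit{\Lambda}$ is built (Lemma~\ref{lemma:sur-Lambda}) and the column-reduction (Surjective Lemma) is run. Your plan, as written, is attempting the argument on a space that lacks the needed structure.

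Third, and most concretely, prime avoidance alone will not carry the modification step at maximal ideals. The paper's Section~\ref{sec:sur-lemma} argues at nonmaximal $\q\in\mathit{\Lambda}$ by a determinant/polynomial trick that needs $R/\q$ to be an infinite domain; the paper explicitly notes this can fail at $\m\in\mathit{\Lambda}\cap\Max(R)$ with small residue field (specifically if $|R/\m|\leqslant 1+\mu_{R_{\m}}(N_{\m})$), and devotes all of Section~\ref{sec:Max} to a separate matrix construction (Lemmas~\ref{lemma:sur-L}--\ref{lemma:sur-Max}, using the Chinese Remainder Theorem and an explicit $Q\in\GL(n,R)$) to handle those maximal ideals uniformly. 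Your proposal treats ``prime avoidance at height-$d$ maximal elements'' as the generic case, which is precisely where the naive argument breaks.

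Finally, your observation about the naive alternative (passing through $\ker g$) is a good one and the concern is real, but the paper does not fuse an induction on $t$ with an induction on $\dim(Y)$ the way you propose. Instead it proves a single column-reduction statement (the Surjective Lemma: from an $n$-tuple that is $(t,X,X)$-surjective with $n\geqslant t+1$ one can produce an $(n-1)$-tuple in the same $R$-span that is still $(t,X,X)$-surjective), and applies it $n-t$ times to the generators of the finitely generated $F$. The weakened notion $(t,X,\p)$-surjectivity, graded by $\dim_X(\p)$, is what allows the reduction to proceed without the awkward $\ker g$ detour. Your plan would need an analogue of this graded invariant to make the ``bookkeeping'' precise, and as it stands you have not supplied one.
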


Notice that Theorem~\ref{theorem:Bas} is concerned with the existence of a surjective $S$-linear map from $M$ to $S$.  If such a map exists, then it must split.  This observation motivates the following definition:

\begin{definition}\label{definition:spl}
Let $R$ be a commutative ring, $S$ an $R$-algebra, and $M$ and $N$ right $S$-modules.  We let $\spl_S(M,N)$ denote the supremum of the nonnegative integers $t$ such that there exists a split surjective $S$-linear map from $M$ to $N^{\oplus t}$, and we refer to $\spl_S(M,N)$ as the \textit{global splitting capacity of $M$ with respect to $N$ over $S$}.

Let $\p\in\Spec(R)$.  We refer to $\spl_{S_{\p}}(M_{\p},N_{\p})$ as the \textit{local splitting capacity of $M$ with respect to $N$ over $S$ at $\p$}.
\end{definition}

As with surjective capacities, we can always get an upper bound on a global splitting capacity in terms of local splitting capacities:
\[
\spl_S(M,N)\leqslant\inf\{\spl_{S_{\m}}(M_{\m},N_{\m}):\m\in\Max(R)\cap\Supp_R(N)\}.
\]

Next, we notice that, in Theorem~\ref{theorem:Bas}, we can replace every instance of the symbol $\sur$ with the symbol $\spl$ to get a lower bound on a global splitting capacity using local splitting capacities.  Our next theorem ensures that we can make the same modifications to Theorem~\ref{theorem:sur-Bas} if we assume that $N$ is finitely presented over $S$.  For emphasis, we mention that $N$ need not be flat over $S$ for the following theorem to hold.

\begin{theorem}\label{theorem:spl-Bas}
Let $R$ be a commutative ring, $S$ a module-finite $R$-algebra, $M$ a direct summand of a direct sum of finitely presented right $S$-modules, and $N$ a finitely presented right $S$-module.  Suppose that $Y:=\Max(R)\cap\Supp_R(N)$ is Noetherian with $\dim(Y)<\infty$.  Then the following statements hold:
\begin{enumerate}
\item Let $t$ be a positive integer, and suppose that $\spl_{S_{\m}}(M_{\m},N_{\m})\geqslant t+\dim(Y)$ for every $\m\in Y$.  Then $\spl_S(M,N)\geqslant t$.
\item $\spl_S(M,N)=\infty$ if and only if $\spl_{S_{\m}}(M_{\m},N_{\m})=\infty$ for every $\m\in Y$.
\item Suppose that $\spl_{S_{\n}}(M_{\n},N_{\n})<\infty$ for some $\n\in Y$.  Then 
\[
\spl_S(M,N)\geqslant \min\{\spl_{S_{\m}}(M_{\m},N_{\m}):\m\in Y\}-\dim(Y).
\]
\end{enumerate}
\end{theorem}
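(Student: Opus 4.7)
My strategy is to prove part (1), from which parts (2) and (3) follow formally as in Theorem~\ref{theorem:sur-Bas}: part (2) is immediate in the forward direction because localization preserves split surjectivity, and in the reverse direction by applying (1) with arbitrarily large $t$; part (3) is trivial if $m := \min\{\spl_{S_\m}(M_\m,N_\m) : \m\in Y\} \leqslant \dim(Y)$, and otherwise follows from (1) applied with $t = m - \dim(Y)$. To prove (1) I would induct on $t$. The inductive step is routine: once the base case has produced a decomposition $M \cong N \oplus M'$, the module $M'$ is still a direct summand of a direct sum of finitely presented right $S$-modules, and localizing the decomposition shows $\spl_{S_\m}(M'_\m, N_\m) \geqslant \spl_{S_\m}(M_\m, N_\m) - 1 \geqslant (t-1) + \dim(Y)$ at each $\m \in Y$, so the inductive hypothesis applies.

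This reduces (1) to the base case $t = 1$: assuming $\spl_{S_\m}(M_\m, N_\m) \geqslant 1 + \dim(Y)$ for every $\m \in Y$, exhibit $N$ as a direct summand of $M$. Following the basic-element technique behind Serre's splitting theorem and the proof of Theorem~\ref{theorem:Bas}, my goal is to construct a single homomorphism $f : N \to M$ whose localization $f_\m$ is a split injection at every $\m \in Y$ simultaneously; such an $f$ is then automatically a globally split injection and yields the required direct summand of $M$. The hypothesis that $N$ is \emph{finitely presented} enters essentially here, because it guarantees the canonical isomorphism $\Hom_S(M, N)_\m \cong \Hom_{S_\m}(M_\m, N_\m)$ for every $\m \in Y$, so that locally defined left inverses of $f_\m$ correspond to germs of globally defined homomorphisms $M \to N$ and can be tested and compared across $Y$.

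The main obstacle is the global construction of $f$. Local splittings are highly non-canonical and cannot be patched coherently by naive means, so I would carry out a prime-avoidance / general-position induction over $Y$, stratified by the dimension of the closed subsets on which a candidate map fails to split. The extra $\dim(Y)$ units in the hypothesis are precisely what is consumed by such an induction: at each of the $\dim(Y)+1$ strata one perturbs the current candidate homomorphism so as to retain splitting at all previously handled primes while achieving splitting at a new generic point, using the Noetherianity and finite-dimensionality of $Y$ to guarantee termination. The delicate technical point, and the heart of the whole argument, is to verify that ``being a split injection'' is an open condition in the appropriate sense and that a single extra summand of local capacity suffices to perform the perturbation at each stratum; this is where I expect to spend the bulk of the work, and where the hypotheses on $Y$ and on $N$ being finitely presented both get fully used.
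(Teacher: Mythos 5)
Your proposal takes a genuinely different route from the paper's, and both of its load-bearing steps have gaps. First, you orient your main map as $f : N \to M$ and invoke ``the canonical isomorphism $\Hom_S(M, N)_\m \cong \Hom_{S_\m}(M_\m, N_\m)$'' as a consequence of $N$ being finitely presented. That isomorphism is controlled by the \emph{source} module: it requires $M$ to be finitely presented, which is not assumed here ($M$ is only a direct summand of a possibly infinite direct sum of finitely presented modules). What finite presentation of $N$ actually gives is $\Hom_S(N, M)_\m \cong \Hom_{S_\m}(N_\m, M_\m)$, and this is exactly why the paper works with split \emph{surjections} $M \to N^{\oplus t}$ rather than split injections $N \to M$: for $g \in \Hom_S(M, N^{\oplus t})$, a local section lives in $\Hom_{S_\m}(N_\m^{\oplus t}, M_\m) \cong \Hom_S(N^{\oplus t}, M)_\m$ and can be cleared of denominators, which is the content of Lemma~\ref{lemma:spl-closed} and of the remark preceding Theorem~\ref{theorem:spl-prelim}. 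With your orientation the needed isomorphism is not available, so your claim that local split injectivity of $f$ at each $\m \in Y$ forces global split injectivity is unsupported, and the subsequent perturbation argument rests on it.

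Second, your reduction from general $t$ to $t=1$ infers $\spl_{S_\m}(M'_\m, N_\m) \geqslant \spl_{S_\m}(M_\m, N_\m) - 1$ from the decomposition $M_\m \cong N_\m \oplus M'_\m$ ``by localizing.'' Localization only produces the isomorphism; the displayed inequality is a cancellation statement for the possibly non--finitely-generated $S_\m$-module $M_\m$, and it does not come for free. The paper avoids both difficulties by never decomposing $M$ at all: it constructs a single finitely generated $R$-submodule $F$ of $\Hom_S(M,N)$ with $\delta_\m(F) \geqslant t + \dim(Y)$ for every $\m \in Y$ (Lemma~\ref{lemma:spl-fg}, which is precisely where the hypothesis on $M$ is used), and then repeatedly applies the Splitting Lemma to shrink an $n$-tuple of elements of $F$ down to a $t$-tuple that is split surjective locally on $Y$, handling all $t$ components in one pass rather than by induction on $t$.
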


Now suppose that $R$ is a commutative Noetherian ring and that $M$ and $N$ are finitely generated $R$-modules.  In this more restricted setting, we can improve our lower bounds on global surjective and splitting capacities.  Our starting point for this work is Serre's Splitting Theorem from algebraic $K$-theory~\cite[\textit{Th\'{e}or\`{e}me~1}]{Ser}.  Below, $\dim_X(\p)$ refers to the Krull dimension of $\Var(\p)\cap X$, where $\p\in X\subseteq\Spec(R)$ and $\Var(\p):=\{\q\in\Spec(R):\p\subseteq\q\}$.  The symbol $j$-$\Spec(R)$ refers to the set of all $\p\in\Spec(R)$ such that $\p$ can be expressed as an intersection of maximal ideals of $R$.  The set $j$-$\Spec(R)$ was introduced by Swan in~\cite{Swa} and was used by Eisenbud and Evans in~\cite{EE} and by De~Stefani, Polstra, and Yao in~\cite{DSPY}.  In the corollary following Proposition~1 in~\cite{Swa}, Swan notes that $j$-$\Spec(R)$ is Noetherian if and only if $\Max(R)$ is Noetherian.  In the same corollary, Swan indicates that $\dim(j$-$\Spec(R))=\dim(\Max(R))$.

\begin{theorem}[Serre's Splitting Theorem~{\cite[\textit{Th\'{e}or\`{e}me~1}]{Ser}}]\label{theorem:Ser}
Let $R$ be a commutative Noetherian ring, and let $P$ be a finitely generated projective $R$-module.  Let $X:=j\textnormal{-}\Spec(R)$, and suppose that $\dim(X)<\infty$.  Then
\[
\sur_R(P,R)\geqslant\inf\{\sur_{R_{\p}}(P_{\p},R_{\p})-\dim_X(\p):\p\in X\}.
\]
\end{theorem}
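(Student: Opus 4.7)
The plan is to prove by induction on the positive integer $t$ the auxiliary claim that if $\sur_{R_{\p}}(P_{\p},R_{\p}) \geq t + \dim_X(\p)$ for every $\p \in X$, then $\sur_R(P,R) \geq t$. The stated inequality then follows by applying this claim to every positive integer $t$ no larger than the infimum on the right-hand side; the case in which that infimum is $+\infty$ is handled by letting $t$ grow without bound, and the cases in which it is nonpositive are trivial.

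The heart of the argument will be producing a \emph{basic element}: an element $x \in P$ whose image in $P_{\p}/\m_{\p}P_{\p}$ is nonzero for every $\p \in X$. I would do this by Noetherian induction on the closed subset $Z(x) := \{\p \in X : x \in \m_{\p}P_{\p}\}$ of $X$, starting with $x = 0$ (so $Z(x) = X$) and iteratively shrinking $Z(x)$ until it becomes empty. At each step I would isolate the finitely many maximal points $\p_{1},\ldots,\p_{r}$ of $Z(x)$, use the hypothesis $\sur_{R_{\p_i}}(P_{\p_i},R_{\p_i}) \geq 1 + \dim_X(\p_i)$ to find a local perturbation of $x$ that becomes basic at each $\p_i$, and then patch these perturbations together via a prime-avoidance argument so that basicness is preserved at every prime of $X \setminus Z(x)$. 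This avoidance step --- essentially the Serre/Eisenbud--Evans basic element technique --- is the main obstacle; the dimension bound is precisely what supplies enough local room inside $P_{\p_i}$ to carry it out.

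Given a basic element $x \in P$, projectivity of $P$ makes $P_{\m}$ free for every maximal ideal $\m$ of $R$, and since $\Max(R) \subseteq X$, the element $x$ extends to a basis of $P_{\m}$ at every such $\m$. Consequently the ideal $I := \{\varphi(x) : \varphi \in \Hom_R(P,R)\}$ of $R$ satisfies $I_{\m} = R_{\m}$ at every maximal ideal $\m$ (using that forming $\Hom_R(P,-)$ commutes with localization for finitely presented $P$), so $I = R$. Choosing $\varphi \in \Hom_R(P,R)$ with $\varphi(x) = 1$ splits off a free direct summand and yields $P \cong Rx \oplus P'$ with $P'$ again finitely generated and projective.

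Finally, for any $\p \in X$, localizing the decomposition $P = Rx \oplus P'$ gives the inequality $\sur_{R_{\p}}(P'_{\p},R_{\p}) \geq \sur_{R_{\p}}(P_{\p},R_{\p}) - 1 \geq (t-1) + \dim_X(\p)$, so the inductive hypothesis applied to $P'$ with parameter $t-1$ produces a surjection $P' \twoheadrightarrow R^{t-1}$. Combining this with the surjection $\varphi\colon P \twoheadrightarrow R$ yields a surjection $P \twoheadrightarrow R^{t}$, closing the induction.
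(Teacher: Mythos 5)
Your outline takes a genuinely different route from the paper. The paper never proves Theorem~\ref{theorem:Ser} directly: it establishes the much more general Theorem~\ref{theorem:sur-Ser} (over a module-finite $R$-algebra $S$, with $N$ an arbitrary finitely generated module) and obtains Theorem~\ref{theorem:Ser} as the special case $S=R$, $M=P$, $N=R$. That proof runs through the Surjective Lemma (Lemma~\ref{lemma:sur}), the $\partial$-capacity formalism of Definition~\ref{definition:del}, and a reduction to a finite controlling set $\mathit{\Lambda}\subseteq X$ of primes (Lemmas~\ref{lemma:sur-Lambda} and~\ref{lemma:sur-reduction}). Your proposal instead exploits projectivity of $P$: a basic element splits off a free rank-one direct summand $Rx$, and then an induction on $t$ applied to the complementary summand $P'$ closes the argument. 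This is closer to Serre's original proof and is a legitimate, more elementary path for this special case; the trace-ideal argument showing $I=R$, the decomposition $P\cong Rx\oplus P'$, and the localization step $\sur_{R_{\p}}(P'_{\p},R_{\p})=\sur_{R_{\p}}(P_{\p},R_{\p})-1$ are all correct. But note this strategy has no analogue in the paper's setting, where $M$ is not assumed projective and one cannot split off summands, which is why the paper must produce the surjection $M\twoheadrightarrow N^{\oplus t}$ directly.

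The difficulty is that the proof, as written, has a real gap at precisely the step you flag as ``the main obstacle,'' and it is not a gap of omitted-but-routine detail: it is where all the work of the theorem lives. A Noetherian induction on $Z(x):=\{\p\in X: x\in\m_{\p}P_{\p}\}$ paired with a naive prime-avoidance does not go through. The set $X\setminus Z(x)$ is in general infinite, so there is no way to choose a single perturbation $y$ ``avoiding finitely many conditions'' that simultaneously makes $x+y$ basic at the generic points of $Z(x)$ and preserves basicness at \emph{every} $\q\in X\setminus Z(x)$; nothing in your sketch forces $Z(x+y)\subseteq Z(x)$, so the induction may fail to terminate. The classical repair --- in Serre, Eisenbud--Evans, Swan, and encoded in the paper's Lemmas~\ref{lemma:sur-Lambda} and~\ref{lemma:sur-reduction} --- is to show that basicness on all of $X$ is controlled by a single \emph{finite} subset $\mathit{\Lambda}$ so that the avoidance genuinely is a finite problem. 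Even granting that, a second subtlety remains that your sketch does not confront: for a maximal controlling prime $\m$, the residue field $R/\m$ may be finite, so the determinant-avoidance argument available at nonmaximal primes (where $R/\q$ is an infinite domain) does not apply. The paper devotes all of Section~\ref{sec:Max} (Lemmas~\ref{lemma:sur-L}--\ref{lemma:sur-Max}) to a separate Chinese-Remainder construction for exactly this reason. Your remark that the dimension bound ``supplies enough local room'' is true but is nowhere near a substitute for these two missing ingredients.
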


The following result by De~Stefani, Polstra, and Yao generalizes Serre's Splitting Theorem by replacing $P$ with an arbitrary finitely generated $R$-module $M$:

\begin{theorem}[De~Stefani, Polstra, and Yao~{\cite[Theorem~3.12]{DSPY}}]\label{theorem:DSPY}
Let $R$ be a commutative Noetherian ring, and let $M$ be a finitely generated $R$-module.  Let $X:=j\textnormal{-}\Spec(R)$, and suppose that $\dim(X)<\infty$.  Then
\[
\sur_R(M,R)\geqslant\inf\{\sur_{R_{\p}}(M_{\p},R_{\p})-\dim_X(\p):\p\in X\}.
\]
\end{theorem}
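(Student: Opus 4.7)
Set $s := \inf\{\sur_{R_\p}(M_\p, R_\p) - \dim_X(\p) : \p \in X\}$; we may assume $s \geq 1$, and induct on $s$. The plan is to reduce by one copy of $R$ at a time: granted a surjection $f : M \twoheadrightarrow R$, the projectivity of $R$ splits $M \cong R \oplus K$ with $K = \ker f$ finitely generated. At every $\p$ we have $M_\p \cong R_\p \oplus K_\p$, and a direct local argument establishes the additivity
\[
\sur_{R_\p}(M_\p, R_\p) = \sur_{R_\p}(K_\p, R_\p) + 1
\]
(if a hypothetical surjection $R_\p \oplus K_\p \twoheadrightarrow R_\p^c$ sends $1 \in R_\p$ to a unimodular vector, change coordinates; otherwise Nakayama forces $K_\p$ to surject onto $R_\p^c$ by itself). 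Hence $\sur_{R_\p}(K_\p, R_\p) - \dim_X(\p) \geq s - 1$ for all $\p \in X$, so the inductive hypothesis gives a surjection $K \twoheadrightarrow R^{s-1}$; combining with $f$ yields $M \twoheadrightarrow R^s$, as required.

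The work is therefore entirely in producing the surjection $f : M \twoheadrightarrow R$. Set $N := \Hom_R(M, R)$, a finitely generated $R$-module since $M$ is finitely presented over the Noetherian ring $R$. Then $N_\p = \Hom_{R_\p}(M_\p, R_\p)$, and a global $f \in N$ is surjective if and only if $f_\m$ is surjective for every $\m \in \Max(R)$, equivalently (Nakayama) the image of $f$ in $\Hom_{k(\m)}(M_\m / \m M_\m, k(\m))$ is nonzero. For each $\m \in \Max(R)$ the hypothesis $\sur_{R_\m}(M_\m, R_\m) \geq s \geq 1$ furnishes such a local $f_\m$; more generally, at every $\p \in X$ the hypothesis $\sur_{R_\p}(M_\p, R_\p) \geq 1 + \dim_X(\p)$ provides enough local room to carry out a prime avoidance argument. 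My plan is to enumerate the relevant primes of the Noetherian finite-dimensional space $X$ in decreasing order of $\dim_X$ and iteratively adjust a running candidate $f$ by elements of $N$, preserving surjectivity at primes already handled while achieving it at the next \textemdash\ an Eisenbud--Evans-style basic element construction.

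The main obstacle is exactly this patching step: we need a basic element theorem tailored to the condition ``$f_\p$ is surjective'' rather than to the classical condition ``$f$ is part of a minimal generating set of $N_\p$''. For projective $M$ the natural map $N_\p / \p N_\p \to \Hom_{k(\p)}(M_\p / \p M_\p, k(\p))$ is an isomorphism and the two conditions coincide, so Serre's proof of Theorem \ref{theorem:Ser} goes through via the classical Eisenbud--Evans theorem. In the present generality this map may have a nontrivial kernel, so an element of $N$ can be basic in the Eisenbud--Evans sense without localizing to a surjection onto $R_\p$. The technical heart of the De~Stefani, Polstra, and Yao argument is therefore a basic element / generalized prime avoidance construction adapted directly to the surjectivity condition, with the dimensional deficit $\dim_X(\p)$ counting the overhead needed to handle each higher-dimensional prime of $X$. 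Once such an $f$ is produced, the induction of the first paragraph completes the proof.
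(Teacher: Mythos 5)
Your reduction is sound: the additivity $\sur_{R_\p}(M_\p,R_\p) = \sur_{R_\p}(K_\p,R_\p)+1$ for $M_\p \cong R_\p \oplus K_\p$ is correct (the unimodular/Nakayama dichotomy you sketch works), and once a single surjection $f : M \twoheadrightarrow R$ is in hand the split-and-induct scheme delivers the theorem. Your diagnosis of why Eisenbud--Evans does not apply off the shelf is also right: for non-projective $M$ the map $N_\p/\p N_\p \to \Hom_{k(\p)}(M_\p/\p M_\p, k(\p))$ can have a kernel, so basicity of $f$ in $N_\p$ does not imply surjectivity of $f_\p$.

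But there is a genuine gap, and it is not a small one. You write that ``the work is therefore entirely in producing the surjection $f : M \twoheadrightarrow R$,'' correctly name the needed ingredient (a basic-element/prime-avoidance lemma calibrated to the condition ``$f_\p$ is surjective'' rather than ``$f$ is basic in $N_\p$''), and then stop. That lemma \emph{is} the theorem: it is precisely the content of what the paper calls the Surjective Lemma (Lemma~\ref{lemma:sur}), whose proof occupies Sections~\ref{sec:Lambda} through~\ref{sec:sur-lemma}. A proposal that reduces a theorem to a single unproven lemma carrying all of the technical weight has not proved the theorem. Two specific difficulties your outline glosses over: (i) the patching must be organized so that achieving surjectivity at a new prime does not destroy it at primes already handled, which in the paper is done by reducing to a \emph{finite} set $\mathit{\Lambda}$ of primes (Lemma~\ref{lemma:sur-Lambda}) and then modifying $f$ by elements lying in the intersection of the already-handled primes; and (ii) the prime-avoidance/determinant step, which at a nonmaximal $\q$ uses that $R/\q$ is an infinite domain, breaks down at maximal ideals with small residue fields (e.g.\ $|R/\m| \leqslant 1 + \mu_{R_\m}(N_\m)$) --- the paper has to treat $\mathit{\Lambda}\cap\Max(R)$ by an entirely separate Chinese-Remainder/permutation-matrix argument in Section~\ref{sec:Max}. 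Neither issue appears in your sketch.

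For comparison, the paper does not take the split-and-induct route at all: it never invokes projectivity of $R$ to split off a free summand. Instead it fixes a finite generating tuple $(f_1,\ldots,f_n)^\top$ of $F := \Hom_R(M,R)$ and repeatedly applies the Surjective Lemma to shorten the tuple from length $n$ to length $t$ while preserving a $(t,X,X)$-surjectivity condition on the truncations; at length $t$, Remark~\ref{remark:nXp} converts this into genuine surjectivity at every $\p \in X$. That approach produces the surjection onto $R^{\oplus t}$ in one pass and is what allows the paper to replace the target $R$ by an arbitrary finitely generated module $N$ over a module-finite $R$-algebra $S$, where your splitting reduction would no longer be available (a surjection $M \twoheadrightarrow N$ need not split). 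Your route is legitimate for the stated special case, but the technical heart is identical and is missing from the proposal.
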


Theorem~\ref{theorem:DSPY} can be compared with another extension of Serre's Splitting Theorem due to Stafford~\cite[Theorem~5.7]{Sta}.

We generalize Theorem~\ref{theorem:DSPY} in the following manner:  We let $S$ be a module-finite $R$-algebra; we allow $M$ to be a right $S$-module; and we replace the module $R$ with an arbitrary finitely generated $S$-module $N$:

\begin{theorem}\label{theorem:sur-Ser}
Let $R$ be a commutative Noetherian ring, $S$ a module-finite $R$-algebra, and $M$ and $N$ finitely generated right $S$-modules.  Let $X:=j\textnormal{-}\Spec(R)\cap\Supp_R(N)$, and suppose that $\dim(X)<\infty$.  Then
\[
\sur_S(M,N)\geqslant\inf\{\sur_{S_{\p}}(M_{\p},N_{\p})-\dim_X(\p):\p\in X\}.
\]
\end{theorem}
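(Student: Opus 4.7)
The plan is to induct on $t:=\inf\{\sur_{S_\p}(M_\p,N_\p)-\dim_X(\p):\p\in X\}$, mirroring the strategy De~Stefani, Polstra, and Yao employ in their proof of Theorem~\ref{theorem:DSPY}, but with the auxiliary $R$-module $H:=\Hom_S(M,N)$ playing the role that $M$ itself plays in the case $N=R$.  Since $R$ is Noetherian and $M,N$ are finitely generated over the module-finite $R$-algebra $S$, the module $H$ is finitely generated, hence Noetherian.  We may assume $t\geqslant 1$, since the conclusion is vacuous otherwise.

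For the inductive step (which subsumes the base case $t=1$), we aim to construct a surjection $\phi:M\to N$ that is \emph{basic} at every $\p\in X$, meaning that, with $m_\p:=t+\dim_X(\p)$ (so that $\sur_{S_\p}(M_\p,N_\p)\geqslant m_\p$ by hypothesis), the localization $\phi_\p$ is a coordinate of some surjection $M_\p\to N_\p^{\oplus m_\p}$.  Basicness at each $\p$ forces $\sur_{S_\p}(K_\p,N_\p)\geqslant m_\p-1=(t-1)+\dim_X(\p)$ for $K:=\ker(\phi)$, and since $\Supp_R(N)$ is unchanged under this reduction, the induction hypothesis applied to $K$ yields a surjection $K\to N^{\oplus(t-1)}$.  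Combined with $\phi$, this builds the desired surjection $M\to N^{\oplus t}$, provided the surjection on $K$ can be realized as the restriction of some $\Psi:M\to N^{\oplus(t-1)}$; the basicness of $\phi$---its compatibility with local surjections at every $\p$---is precisely what enables this extension.

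The principal obstacle is the construction of a globally basic $\phi\in H$.  Following the stratified prime-avoidance strategy of Serre, Eisenbud--Evans, and De~Stefani--Polstra--Yao, I would order $X$ by the function $\p\mapsto\dim_X(\p)$ and iteratively modify a candidate $\phi$ stratum by stratum.  At each $\p\in X$, the locus of $\psi\in H$ failing to be basic at $\p$ lies in a proper $R_\p$-submodule of $H_\p$, so prime avoidance inside the Noetherian $R$-module $H$---combined with $\dim X<\infty$, Swan's identity $\dim(j\textnormal{-}\Spec(R))=\dim(\Max(R))$, and the Noetherian topology on $X$---lets us select $\phi$ avoiding all bad loci simultaneously.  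The chief new subtlety, compared to the case $N=R$, is that basic elements now live in $\Hom_S(M,N)$ rather than in $M$ itself, so the local test for basicness at $\p$ must be reformulated in terms of the induced maps to $N_\p/\p N_\p$, and the residual effect of the prime-avoidance adjustments on the extension obstruction above must be tracked carefully.
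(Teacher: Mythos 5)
The central gap is at the extension step. You reduce $\sur_S(M,N)\geqslant t$ to producing (a) a surjection $\phi:M\to N$ that is basic at every $\p\in X$ and (b) a surjection $K\to N^{\oplus(t-1)}$ with $K:=\ker\phi$ obtained from the inductive hypothesis, and then you need that surjection on $K$ to be the restriction of some $\Psi:M\to N^{\oplus(t-1)}$. In the case $N=R$ (Serre, De~Stefani--Polstra--Yao) this is automatic, because a surjection $M\twoheadrightarrow R$ splits, so $M\cong K\oplus R$ and maps out of $K$ extend by zero. For general $N$ the short exact sequence $0\to K\to M\to N\to 0$ need not split, and a map $K\to N^{\oplus(t-1)}$ need not extend to $M$ (one would need something like $\Ext^1_S(N,N)=0$). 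The assertion that ``basicness of $\phi$ \ldots is precisely what enables this extension'' is not justified: basicness is a pointwise statement at each $\p$ about $\phi_\p$ being a coordinate of a local surjection, and it gives no control over the global extension problem. The induction hypothesis, which is just the theorem applied to $K$, produces only some surjection $K\to N^{\oplus(t-1)}$ with no compatibility built in. This is exactly the obstruction the paper circumvents by never descending to the kernel: instead it works with a fixed finitely generated $R$-submodule $F=Rf_1+\cdots+Rf_n$ of $\Hom_S(M,N)$ and uses the Surjective Lemma to shorten the generating tuple from length $n$ down to length $t$ one step at a time, while maintaining local surjectivity at every $\p\in X$. Every intermediate map is an $R$-linear combination of the $f_i$, so it automatically lives in $\Hom_S(M,N)$; the extension problem never arises. (This is also why the auxiliary Theorem~\ref{theorem:sur-prelim} is stated for a pair $L\subseteq M$, $F\subseteq\Hom_S(L,N)$ with the extendability to $G\subseteq\Hom_S(M,N)$ assumed up front.)

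A secondary issue is the prime-avoidance step. You propose to find a globally basic $\phi\in H:=\Hom_S(M,N)$ by avoiding bad loci stratum by stratum, but this kind of avoidance relies on residue fields being large enough, and that can fail at maximal ideals: there may be $\m\in\Max(R)$ with $|R/\m|\leqslant 1+\mu_{R_\m}(N_\m)$, so the ``generic position'' perturbation has nowhere to land. The paper devotes a substantial part of the argument (the passage through a finite set $\mathit{\Lambda}$, and Lemmas~\ref{lemma:sur-L},~\ref{lemma:sur-Q},~\ref{lemma:sur-Max}, which use permutation matrices and the Chinese Remainder Theorem) precisely to handle the maximal ideals in $\mathit{\Lambda}$ without any hypothesis on residue field size, reserving the determinant/avoidance argument for the nonmaximal primes, where $R/\q$ is automatically infinite. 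Your sketch does not engage with this distinction, which is one of the genuine difficulties in passing from $N=R$ to arbitrary finitely generated $N$.
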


As with Theorem~\ref{theorem:Bas}, we can replace every instance of the symbol $\sur$ with the symbol $\spl$ in Theorems~\ref{theorem:Ser} and~\ref{theorem:DSPY}.  Our next theorem indicates that we can revise Theorem~\ref{theorem:sur-Ser} in the same way:

\begin{theorem}\label{theorem:spl-Ser}
Let $R$ be a commutative Noetherian ring, $S$ a module-finite $R$-algebra, and $M$ and $N$ finitely generated right $S$-modules.  Let $X:=j\textnormal{-}\Spec(R)\cap\Supp_R(N)$, and suppose that $\dim(X)<\infty$.  Then
\[
\spl_S(M,N)\geqslant\inf\{\spl_{S_{\p}}(M_{\p},N_{\p})-\dim_X(\p):\p\in X\}.
\]
\end{theorem}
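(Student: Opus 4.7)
The plan is to mirror the proof of Theorem~\ref{theorem:sur-Ser}, tracking splittings alongside surjectivity at each step, in the same manner that Theorem~\ref{theorem:spl-Bas} refines Theorem~\ref{theorem:sur-Bas}.  The key enabling observation is that since $R$ is Noetherian and $S$ is module-finite over $R$, the ring $S$ is right Noetherian, so $M$ and $N$ are both finitely presented right $S$-modules.  In particular, $\Hom_S(N,-)$ commutes with localization at primes of $R$, so the locus in $\Spec(R)$ where the localization of a given surjection $M\twoheadrightarrow N^{\oplus t}$ fails to split is closed.  This local-to-global compatibility is what makes the splitting version of the argument available in the first place.

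Set $c:=\inf\{\spl_{S_{\p}}(M_{\p},N_{\p})-\dim_X(\p):\p\in X\}$, fix a positive integer $t\leqslant c$, and argue by induction on $d:=\dim(X)$.  In the base case $d=0$ one has $X=\Max(R)\cap\Supp_R(N)$, since any non-maximal $\p\in X\subseteq j\textnormal{-}\Spec(R)$ would be contained in some $\m\in\Max(R)$, which in turn would lie in $\Supp_R(N)\cap j\textnormal{-}\Spec(R)=X$ and yield a strict chain in $X$.  Then $\dim_X(\p)=0$ for every $\p\in X$, so the hypothesis becomes $\spl_{S_{\m}}(M_{\m},N_{\m})\geqslant t$ for every $\m\in X$, and Theorem~\ref{theorem:spl-Bas}(1) applied with $Y=X$ (which is Noetherian as a subspace of $\Spec(R)$) and $\dim(Y)=0$ yields $\spl_S(M,N)\geqslant t$ immediately.

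For the inductive step with $d\geqslant 1$, I would construct a split surjection $M\twoheadrightarrow N^{\oplus t}$ via a basic-element / basic-set argument in the tradition of Swan, Eisenbud--Evans, and De~Stefani--Polstra--Yao, enhanced to preserve splittings at every stage.  The idea is to choose at each prime of maximal dimension in $X$ a local split surjection $M_{\p}\twoheadrightarrow N_{\p}^{\oplus t}$, glue these local choices globally using prime avoidance within $X$, and then restrict attention to the lower-dimensional stratum $\{\p\in X:\dim_X(\p)<d\}$ so as to invoke the inductive hypothesis there.  The \emph{main obstacle} is that each basic-element modification must preserve a globally defined section, not merely keep the underlying map surjective; every perturbation used in prime avoidance must leave intact the sections already produced on lower-dimensional strata.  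Controlling these sections throughout the induction is exactly where the finite presentation of $N$ and the closedness of the non-splitting locus from the first paragraph become indispensable.
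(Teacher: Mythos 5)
Your opening observations are sound: $S$ is right Noetherian, so $N$ is finitely presented over $S$, which guarantees that $\Hom_S(N,-)$ commutes with localization at primes of $R$ and hence that the split-surjective locus $\{\p:\delta_{\p}(F)>t\}$ is open (this is Lemma~\ref{lemma:spl-closed} in the paper).  Your base case $d=0$ is also correct as stated: a non-maximal $\p\in X$ lies under some $\m\in\Max(R)\cap\Supp_R(N)\subseteq X$, yielding a chain $\{\m\}\subsetneq\Var(\p)\cap X$ and hence $\dim(X)\geqslant 1$, so when $d=0$ one does have $X=\Max(R)\cap\Supp_R(N)$ and Theorem~\ref{theorem:spl-Bas}(1) applies.

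The genuine gap is the inductive step, which you sketch but do not carry out.  You describe the intent (choose local split surjections at the minimal primes of $X$, glue by prime avoidance, and invoke the inductive hypothesis on the stratum $\{\p\in X:\dim_X(\p)<d\}$) and then explicitly name the obstacle — that each basic-element perturbation must preserve globally defined sections, not merely surjectivity — but you never resolve it.  Saying that finite presentation and closedness of the splitting locus ``become indispensable'' does not constitute an argument: one still has to specify which perturbations are allowed, show they keep the map $\m$-split on all of $X$ rather than just on the high-dimensional stratum, and show that the induction can actually be closed.  Moreover, the stratum $\{\p\in X:\dim_X(\p)<d\}$ need not be a basic set for $R$ (intersections of its members can have higher local dimension and fall outside the stratum), so the inductive hypothesis in the form you would need is not available without reformulating the statement for general basic sets, as Theorem~\ref{theorem:spl-prelim} does.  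The paper's route is quite different and avoids induction on $\dim(X)$ altogether: one takes $F=G=\Hom_S(M,N)$ (finitely generated over $R$ by Noetherianness), notes $X$ is a basic set by Example~\ref{example:sur-X}, and applies Theorem~\ref{theorem:spl-prelim}(2), whose real content is the Splitting Lemma (Lemma~\ref{lemma:spl}) — a one-step $n\to n-1$ generator reduction carried out by an explicit $\GL(n,R)$-modification controlled on a finite set $\mathit{\Lambda}$ of primes (Lemma~\ref{lemma:spl-Lambda}), handled separately for the maximal and non-maximal members of $\mathit{\Lambda}$.  Your proposal would need to either reproduce that machinery or supply a genuinely different mechanism for the inductive step, and at present it does neither.
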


We now describe the structure of our paper.  In Section~\ref{sec:sur-Ser}, we prove Theorem~\ref{theorem:sur-Ser}, modulo a lemma (Lemma~\ref{lemma:sur}).  We call this lemma the \textit{Surjective Lemma}, and we prove it in several stages:  Over the course of Sections~\ref{sec:sur-Bas} and~\ref{sec:Lambda}, we first reduce the proof of the Surjective Lemma to a verification that a certain condition holds on a finite set $\mathit{\Lambda}$ of prime ideals of a given commutative ring $R$.  Along the way, we prove Theorem~\ref{theorem:sur-Bas}, modulo the Surjective Lemma.  In Section~\ref{sec:Max}, we study the maximal ideals of $R$ in $\mathit{\Lambda}$ and continue working toward a proof of the Surjective Lemma.  In Section~\ref{sec:sur-lemma}, we address the remaining members of $\mathit{\Lambda}$ and complete a proof of the Surjective Lemma.  In Section~\ref{sec:spl}, we show that Theorems~\ref{theorem:spl-Bas} and~\ref{theorem:spl-Ser} can be proved by making a few key modifications to the proofs of Theorems~\ref{theorem:sur-Bas} and~\ref{theorem:sur-Ser}.  Finally, in Section~\ref{sec:char}, we consider the case of finitely generated modules over a Dedekind domain; in this case, we show that we can give conditions equivalent to having a given global surjective or splitting capacity.

In Sections~\ref{sec:sur-Ser}--\ref{sec:sur-lemma}, we abide by the following conventions:  We let $R$ be a commutative ring; we let $S$ be a module-finite $R$-algebra; we let $M$ denote a right $S$-module; and we let $N$ denote a finitely generated right $S$-module.  We view every left and right $S$-module as a standard $R$-module in the natural way.

\section{A Proof of Theorem~\texorpdfstring{\ref{theorem:sur-Ser}}{0.8}, modulo the Surjective Lemma}\label{sec:sur-Ser}

In this section, we prove Theorem~\ref{theorem:sur-Ser}, modulo the Surjective Lemma (Lemma~\ref{lemma:sur}).  The statement of the Surjective Lemma requires the following definition, which establishes analogues of local and global surjective capacities for arbitrary $R$-submodules of $\Hom_S(M,N)$:

\begin{definition}\label{definition:del}
Let $F$ be an $R$-submodule of $\Hom_S(M,N)$, and let $\p\in\Spec(R)$.  We let $\partial(F)$ denote the supremum of the nonnegative integers $t$ such that there exists a surjective $f\in F^{\oplus t}\subseteq \Hom_S(M,N^{\oplus t})$.  We let $\partial_{\p}(F)$ denote the supremum of the nonnegative integers $t$ such that there exists $f\in F^{\oplus t}$ with the property that $f_{\p}$ is surjective.

Let $n$ be a positive integer.  We will think of a member of $\Hom_S(M,N^{\oplus n})$ as a column
\[
\begin{pmatrix}
f_1 \\
\vdots \\
f_n \\
\end{pmatrix},
\]
where $f_1,\ldots,f_n\in \Hom_S(M,N)$.  When we refer to such an $n$-tuple without using a display, we write $(f_1,\ldots,f_n)^{\top}$ to denote the transpose of a row of functions.
\end{definition}

\begin{remark}\label{remark:infty}
Let $F$ be a finitely generated $R$-submodule of $\Hom_S(M,N)$.  We observe that $\partial(F)=\infty$ if and only if $N=0$:  Certainly, if $N=0$, then $\partial(F)=\infty$.  Suppose that $\partial(F)=\infty$, and let $n:=1+\mu_R(F)$.  Then there exist $f_1,\ldots,f_n\in F$ such that $F=Rf_1+\cdots+Rf_n$.  Let $f:=(f_1,\ldots,f_n)^{\top}$.  Since $\partial(F)=\infty$, there exists an $(n+1)\times n$ matrix $B$ with entries in $R$ such that $Bf$ is surjective.  Hence $B$ represents a surjective $R$-linear map from $N^{\oplus n}$ to $N^{\oplus (n+1)}$.  Now, for every $\m\in\Max(R)$, we see that $B\otimes 1_{R/\m}$ is surjective, which implies that $N_{\m}/\m_{\m} N_{\m} \cong N/\m N=0$ since $N$ is finitely generated over $R$.  Hence, for every $\m\in\Max(R)$, Nakayama's Lemma tells us that $N_{\m}=0$ since $N_{\m}$ is finitely generated over $R_{\m}$.  Thus $N=0$.

Let $\p\in\Spec(R)$.  Then, as a result of the discussion above, $\partial_{\p}(F)=\infty$ if and only if $\p\not\in\Supp_R(N)$.
\end{remark}

Using the symbol $\partial$, we now describe a condition weaker than surjectivity that we can impose on a member of $\Hom_S(M,N^{\oplus n})$.

\begin{definition}\label{definition:tXp}
Let $n,t$ be positive integers with $n\geqslant t$; let $\p\in X\subseteq\Spec(R)$; and let $f:=(f_1,\ldots,f_n)^{\top}\in\Hom_S(M,N^{\oplus n})$.  We say that $f$ is \textit{$(t,X,\p)$-surjective} if $\partial_{\p}(Rf_1+\cdots + Rf_n)\geqslant \min\{n,t+\dim_X(\p)\}$.

Let $Y\subseteq X$.  We say that $f$ is \textit{$(t,X,Y)$-surjective} if $f$ is $(t,X,\q)$-surjective for every $\q\in Y$.

When $t$ and $X$ are understood, we use the less cumbersome terms \textit{$\p$-surjective} and \textit{$Y$-surjective} in place of \textit{$(t,X,\p)$-surjective} and \textit{$(t,X,Y)$-surjective}, respectively. 
\end{definition}

\begin{remark}\label{remark:nXp}
Maintaining the hypotheses in the previous definition, we see that $f$ is $(n,X,\p)$-surjective if and only if $f_{\p}$ is surjective:  Suppose that $f$ is $(n,X,\p)$-surjective.  Then there exists an $n\times n$ matrix $B$ with entries in $R$ such that $B_{\p}f_{\p}=(Bf)_{\p}$ is surjective.  Now $B_{\p}$ represents a surjective $S_{\p}$-linear map from $N_{\p}^{\oplus n}$ to itself.  Since $N$ is finitely generated over $R$, we see that $N_{\p}$ is finitely generated over $R_{\p}$.  Hence $B_{\p}$ is bijective, and so $f_{\p}$ is surjective.  Conversely, if $f_{\p}$ is surjective, then $\partial_{\p}(Rf_1+\cdots + Rf_n)=n$, and so $f$ is $(n,X,\p)$-surjective.
\end{remark}

We need one more definition to state the Surjective Lemma, and this concerns the notion of a \textit{basic set for $R$}.  De~Stefani, Polstra, and Yao use basic sets to prove~\cite[Theorems~3.9 and~4.5]{DSPY}.  These theorems have conclusions that are weaker than those of~\cite[Theorems~3.12 and~4.8]{DSPY}, respectively, but the hypotheses of~\cite[Theorems~3.9 and~4.5]{DSPY} are more general.  Later in this section, we state an analogue of~\cite[Theorems~3.9 and~4.5]{DSPY} and a generalization of~\cite[Theorems~3.12 and~4.8]{DSPY} in Theorem~\ref{theorem:sur-prelim}.  We generalize~\cite[Theorems~3.9 and~4.5]{DSPY} in a separate paper~\cite{Bai2} where our goal is to extend Bass's Cancellation Theorem.

Here is the definition of a basic set for $R$, along with a few examples:

\begin{definition}\label{definition:basic}  
Let $X\subseteq\Spec(R)$.  We say that $X$ is a \textit{basic set for $R$} if $X$ is Noetherian and if, for every $\p\in\Spec(R)$ that can be written as an intersection of members of $X$, it is the case that $\p\in X$.
\end{definition}

\begin{example}
Every finite subset of $\Spec(R)$ is a basic set for $R$.
\end{example}

\begin{example}
If $R$ is Noetherian, then $\Spec(R)$ is a basic set for $R$.  If $R$ is a \textit{Jacobson ring}, then $j$-$\Spec(R)=\Spec(R)$.  Hence, if $R$ is a Noetherian Jacobson ring, then $j$-$\Spec(R)$ is a basic set for $R$.  In fact, a more general statement holds, as the next example shows.
\end{example}

\begin{example}\label{example:sur-X}
Let $X:=j$-$\Spec(R)\cap \Supp_R(N)$, and suppose that $X$ is Noetherian.  Then $X$ is a basic set for $R$:  Let $\p\in\Spec(R)$ such that $\p$ is an intersection of members of $X$.  Since every member of $j$-$\Spec(R)$ is an intersection of maximal ideals of $R$, so is $\p$.  Hence $\p\in j$-$\Spec(R)$.  Since $N$ is finitely generated over $R$, we see that $\Supp_R(N)=\Var(\Ann_R(N))$.  Hence $\p$ is an intersection of prime ideals that contain $\Ann_R(N)$, and so $\p$ must itself contain $\Ann_R(N)$.  Thus $\p\in\Var(\Ann_R(N))=\Supp_R(N)$.  We have proved then that $\p\in X$.  Thus $X$ is a basic set for $R$.
\end{example}

\begin{example}
By Example~\ref{example:sur-X}, if $j$-$\Spec(R)$ is Noetherian, then $j$-$\Spec(R)$ is a basic set for~$R$.  If $R$ is Artinian, then $\Max(R)=j$-$\Spec(R)=\Spec(R)$.  If $R$ is \textit{semilocal} (Noetherian with only finitely many maximal ideals) but not Artinian, then $\Max(R)=j$-$\Spec(R)\subsetneq \Spec(R)$, and $\dim(j$-$\Spec(R))=0<\dim(\Spec(R))$.
\end{example}

\begin{example}
Suppose that $R$ is a one-dimensional Noetherian domain with infinitely many maximal ideals.  Then $\Max(R)$ is not a basic set for $R$:  If $\Max(R)$ is a basic set for $R$, then $\Jac(R)\neq 0$, and so $\Min(\Jac(R))=\Max(R)$ is a finite set, a contradiction.

Notice that, in this example, $\Max(R)\subsetneq j$-$\Spec(R)=\Spec(R)$.
\end{example}

\begin{example}
Let $d\in\ZZ$ with $d\geqslant 2$.  Let $T$ be the domain
\[
K[x_1,\ldots,x_d,y_1,y_2,y_3,\ldots],
\]
where $K$ is a field and $x_1,\ldots,x_d,y_1,y_2,y_3,\ldots$ are indeterminates.  Invert every element of $T$ outside of the set
\[
(Tx_1+\cdots+Tx_d)\cup Ty_1\cup Ty_2\cup Ty_3 \cup \cdots
\]
to form a new domain, and suppose that $R$ is this new domain.  Example~3.3 in~\cite{Olb} then indicates that $R$ is a $d$-dimensional Noetherian ring such that
\[
\Max(R)=\{Rx_1+\cdots+Rx_d, Ry_1, Ry_2, Ry_3, \ldots\}
\]
and such that every nonzero prime ideal of $R$ is contained a unique maximal ideal.  We will prove that $\Max(R)\subsetneq j$-$\Spec(R)\subsetneq\Spec(R)$.

To prove that the first inclusion is strict, we will show that $0\in j$-$\Spec(R)-\Max(R)$.  Of course, $0\not\in\Max(R)$.  Suppose that $0\not\in j$-$\Spec(R)$.  Then $\Jac(R)\neq 0$.  Since $R$ is Noetherian, $\Min(\Jac(R))$ is a finite set.  On the other hand, $\Min(\Jac(R))$ contains the infinite set $\{Ry_1, Ry_2, Ry_3, \ldots\}$, a contradiction.  Hence $0\in j$-$\Spec(R)$.

To prove that $j$-$\Spec(R)\neq\Spec(R)$, we will first show that $j$-$\Spec(R)=\{0\}\cup\Max(R)$.  We have already shown that $0\in j$-$\Spec(R)$, and $\Max(R)\subseteq j$-$\Spec(R)$ by definition, so it remains to show that every nonzero member $\p$ of $j$-$\Spec(R)$ is in $\Max(R)$.  As we mentioned above, $\p$ is contained in a unique maximal ideal of $R$.  Since $\p$ is an intersection of maximal ideals of $R$, it must then be the case that $\p\in\Max(R)$.  Hence $j$-$\Spec(R)=\{0\}\cup\Max(R)$, and so $Rx_1\in\Spec(R)-j$-$\Spec(R)$.  Thus $j$-$\Spec(R)\neq\Spec(R)$.

Notice that, in this example, $\dim(j$-$\Spec(R))=1<d=\dim(\Spec(R))$.  In particular, if $X:=\Spec(R)$ and $Y:=j$-$\Spec(R)$, then $\dim_Y(0)=1<d=\dim_X(0)$.
\end{example}

We are now prepared to state the Surjective Lemma.  This lemma is similar in spirit to~\cite[Section~8, Lemma~II]{Bas};~\cite[Theorem~2.4]{CLQ};~\cite[Lemmas~3.8 and~4.4]{DSPY};~\cite[Lemma~3, Lemma~5, and Theorem~B]{EE};~\cite[Theorem~2.1]{Hei};~\cite[\textit{Th\'{e}or\`{e}me~2}]{Ser}; and~\cite[Lemma~5.4]{Sta}, although several techniques that we use to prove the Surjective Lemma, as we will see, are decidedly distinct from previous work.

\begin{lemma}[Surjective Lemma]\label{lemma:sur}
Let $n,t$ be positive integers with $n\geqslant 1+t$, and let $X$ be a subset of $\Supp_R(N)$ that is a basic set for $R$.  Let $f:=(f_1,\ldots,f_n)^{\top}\in\Hom_S(M,N^{\oplus n})$, and suppose that $f$ is $(t,X,X)$-surjective.  Then there exist $f'_1,\ldots,f'_{n-1}\in Rf_1+\cdots + Rf_n$ such that $f':=(f'_1,\ldots, f'_{n-1})^{\top}$ is $(t,X,X)$-surjective.
\end{lemma}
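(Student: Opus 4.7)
My plan is to follow the Bass–Swan–Serre paradigm: seek $f'_i$ of the form $f_i + r_i f_n$ with $r_i \in R$, reduce the required $(t,X,X)$-surjectivity of $f' := (f'_1, \ldots, f'_{n-1})^{\top}$ to verifying a local condition at finitely many primes, and then produce a single tuple $(r_1, \ldots, r_{n-1}) \in R^{n-1}$ that works globally. Writing $F := Rf_1 + \cdots + Rf_n$ and $F' := Rf'_1 + \cdots + Rf'_{n-1}$, one has $F' \subseteq F$, and the task becomes to arrange $\partial_{\p}(F') \geqslant \min\{n-1,\, t + \dim_X(\p)\}$ for every $\p \in X$. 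The given $(t,X,X)$-surjectivity of $f$ supplies, at each $\p \in X$, a matrix $C(\p)$ of size $m(\p) \times n$ over $R$, with $m(\p) := \min\{n, t + \dim_X(\p)\}$, such that $(C(\p) f)_{\p}$ is surjective onto $N_{\p}^{\oplus m(\p)}$. Decomposing $C(\p) = [\,C_1(\p) \mid c(\p)\,]$ into its first $n-1$ columns and its last column, the problem becomes one of ``absorbing'' $c(\p)$ into $C_1(\p)$ after the column substitution induced by the $r_i$.

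The first substantive step is a reduction to a finite set of primes $\Lambda \subseteq X$. I would use the Noetherianness of $X$, the upper semicontinuity of $\dim_X$, and a semicontinuity property of $\p \mapsto \partial_{\p}$ (rooted in the finite generation of $N$) to show that the obstruction locus is controlled by finitely many members of $X$; the basic-set hypothesis is what legitimises this local-to-global passage, since it ensures that intersections of members of $X$ that happen to be prime remain in $X$ and thus that $\dim_X$ is stable under the operations we perform. Concretely, I would argue that $(t,X,\Lambda)$-surjectivity of $f'$ already forces $(t,X,X)$-surjectivity, reducing the problem to choosing $(r_1, \ldots, r_{n-1})$ adapted to the finitely many $\p \in \Lambda$.

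Within $\Lambda$ I would proceed in two stages, treating the maximal ideals of $R$ in $\Lambda$ first and then the non-maximal members of $\Lambda$. At each $\m \in \Max(R) \cap \Lambda$, I would read off $C(\m)$, apply Nakayama over $R_{\m}$, and pin down $r_i \bmod \m$ so that $C_1(\m)$ itself witnesses the required local surjectivity; the finitely many congruence conditions obtained as $\m$ ranges over $\Max(R) \cap \Lambda$ can be merged via the Chinese Remainder Theorem into a single tuple in $R^{n-1}$. For the non-maximal $\p \in \Lambda$, one proceeds by descending induction on $\dim_X(\p)$: at each such $\p$, the extra buffer $m(\p) - t = \dim_X(\p)$ should create enough slack to adjust $(r_1,\ldots,r_{n-1})$ modulo a suitable ideal without undoing the progress already secured at primes of strictly larger $\dim_X$. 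The main obstacle, I expect, is precisely this descending-induction step: the set of bad tuples $(r_1, \ldots, r_{n-1}) \in R^{n-1}$ at a given $\p$ is not literally a union of prime ideals but a locus defined by the failure of a matrix to witness local surjectivity of a map into a non-free $N_{\p}$, so classical prime avoidance does not apply. One must instead develop an avoidance principle tailored to basic sets, exploiting the $\dim_X(\p)$-buffer to absorb all of the congruence constraints inherited from primes of higher $\dim_X$; this appears to be the technical heart that the author's multi-section proof is organised to deliver.
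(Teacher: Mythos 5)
Your overall architecture — reduce to a finite subset $\mathit{\Lambda}$ of $X$ via Noetherianness and semicontinuity, handle $\mathit{\Lambda}\cap\Max(R)$ with congruences merged by the Chinese Remainder Theorem, then handle $\mathit{\Lambda}-\Max(R)$ by an avoidance argument — matches the paper's Sections 3--5 in broad outline. But there is a genuine gap in the ansatz, and a second problem in the ordering of the induction.

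The gap: you fix the shape $f'_i = f_i + r_i f_n$ at the outset, i.e.\ you only allow an elementary matrix $U$ adding multiples of the last row to the others. The paper explicitly records (in the discussion following the proof of the Surjective Lemma and in Corollary~\ref{corollary:sur-res}) that this shape suffices \emph{only} under an extra hypothesis on every $\m\in\mathit{\Lambda}\cap\Max(R)$: either $|R/\m|\geqslant 2+\mu_{R_{\m}}(N_{\m})$ or $\mu_{R_{\m}}(N_{\m})=1$. When $R/\m$ is small relative to $\mu_{R_{\m}}(N_{\m})$, the bad locus of $(r_1,\ldots,r_{n-1})$ modulo $\m$ can exhaust all residues: the defect you describe in your last paragraph (failure of classical prime avoidance) is genuine, and for maximal $\m$ there is no infinitude in $R/\m$ to fall back on. This is precisely why the paper's Lemma~\ref{lemma:sur-L} builds the invertible matrix $V$ from a product including a permutation matrix $P_{\mathscr{L}_{\m}}$, not merely an elementary block. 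The case $j_d=n$ with $f'$ not $\m$-surjective forces a genuine column swap, which your ansatz cannot reproduce. Lemma~\ref{lemma:sur-Q} is then needed to globalise the varying permutations $P_{\mathscr{L}_{\m}}$ into a single $Q\in\GL(n,R)$ compatible with all the congruences, and the final $V$ is $UQ$, not merely $U$.

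The second issue: your descending induction on $\dim_X(\p)$ over $\mathit{\Lambda}-\Max(R)$ runs backwards. The paper handles $\Max(R)\cap\mathit{\Lambda}$ first (these have $\dim_X=0$), then orders the non-maximal primes $\q_1,\ldots,\q_m$ so that each $\q_{\ell}$ is a \emph{minimal} element of $\{\q_1,\ldots,\q_{\ell}\}$; that is, larger primes (with smaller $\dim_X$) are handled earlier. At step $\ell$ one perturbs by entries in $J:=\bigcap_{\p\in\mathit{\Lambda}-\{\q_{\ell},\ldots,\q_m\}}\p$, and the ordering guarantees $J\not\subseteq\q_{\ell}$, which is essential because otherwise the perturbation is trivial modulo $\q_{\ell}$. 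If you instead handle primes of larger $\dim_X$ (hence smaller primes) first, then when you reach a later prime $\q$, some already-handled primes may be contained in $\q$, so the protective ideal $J$ would lie in $\q$ and no non-trivial adjustment modulo $\q$ would be available. Finally, the ``avoidance principle'' you anticipate is, for non-maximal $\q$, the elementary fact that $R/\q$ is an infinite domain, so a nonzero determinant polynomial of degree at most $\mu_{R_{\q}}(N_{\q})$ in one variable has at most $\mu_{R_{\q}}(N_{\q})$ roots and can therefore be avoided in the (infinite) image of $J$ in $R/\q$; there is no analogous move at maximal ideals, which is again why the permutation component of $V$ is unavoidable in general.
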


We prove this lemma in Section~\ref{sec:sur-lemma}.  Given this lemma, we can prove Theorem~\ref{theorem:sur-prelim} below.  Part~(1) of Theorem~\ref{theorem:sur-prelim} provides an analogue of~\cite[Theorems~3.9 and~4.5]{DSPY}; and Part~(2) generalizes~\cite[Theorems~3.12 and~4.8]{DSPY} and~\cite[\textit{Th\'{e}or\`{e}me~1}]{Ser}.  Theorems that extend~\cite[\textit{Th\'{e}or\`{e}me~1}]{Ser} in other directions include~\cite[Corollary~3.2]{CLQ};~\cite[Theorem~A]{EE};~\cite[Theorem~2.5]{Hei}; and~\cite[Theorem~5.7]{Sta}.

\begin{theorem}\label{theorem:sur-prelim}
Let $L$ be an $S$-submodule of $M$; let $F$ be a finitely generated $R$-submodule of $\Hom_S(L,N)$; and let $G$ be an $R$-submodule of $\Hom_S(M,N)$.  Suppose that every member of $F$ can be extended to a member of $G$.  Let $X$ be a subset of $\Supp_R(N)$ that is a basic set for $R$, and suppose that $\dim(X)<\infty$.  Then the following statements hold:
\begin{enumerate}
\item Let $t$ be a positive integer, and suppose that $\partial_{\p}(F)\geqslant t+\dim_X(\p)$ for every $\p\in X$.  Then there exists $g\in G^{\oplus t}$ such that $g_{\p}$ is surjective for every $\p\in X$.
\item Suppose that $\Max(R)\cap\Supp_R(N)\subseteq X$.  Then
\[
\partial(G)\geqslant\inf\{\partial_{\p}(F)-\dim_X(\p):\p\in X\}.
\]
\end{enumerate}
\end{theorem}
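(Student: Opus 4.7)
My plan for Part~(1) is to apply the Surjective Lemma repeatedly to a suitable generating column of $F$ and then extend the resulting entries to $G$; my plan for Part~(2) is to choose the positive integer $t$ in Part~(1) to equal the infimum on the right-hand side and then promote surjectivity on $X$ to global surjectivity by a finitely generated cokernel argument at the maximal ideals.

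For Part~(1), I would first dispose of the trivial case $X=\varnothing$ and then fix generators $f_1,\ldots,f_m$ of $F$ over $R$ with $m\geqslant 1+t$, padding the generating set with zeros if needed.  The column $f:=(f_1,\ldots,f_m)^{\top}\in\Hom_S(L,N^{\oplus m})$ is $(t,X,X)$-surjective, because for every $\p\in X$ one has $\partial_{\p}(Rf_1+\cdots+Rf_m)=\partial_{\p}(F)\geqslant t+\dim_X(\p)\geqslant\min\{m,t+\dim_X(\p)\}$.  I would then iterate the Surjective Lemma (Lemma~\ref{lemma:sur}) $m-t$ times; each application replaces a $(t,X,X)$-surjective column of $n\geqslant 1+t$ components with a $(t,X,X)$-surjective column of $n-1$ components whose entries stay in $F$.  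After $m-t$ steps the column $f^{\ast}=(f_1^{\ast},\ldots,f_t^{\ast})^{\top}$ has the minimum admissible length $t$, so by Remark~\ref{remark:nXp} its $(t,X,\p)$-surjectivity is equivalent to $f^{\ast}_{\p}$ being surjective for every $\p\in X$.  Using the extension hypothesis, I would extend each $f_i^{\ast}$ to some $g_i^{\ast}\in G$, set $g:=(g_1^{\ast},\ldots,g_t^{\ast})^{\top}\in G^{\oplus t}$, and note that $g_{\p}(L_{\p})\supseteq f^{\ast}_{\p}(L_{\p})=N_{\p}^{\oplus t}$, so $g_{\p}$ is surjective.

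For Part~(2), let $u$ denote the infimum on the right.  If $X=\varnothing$, then the hypothesis $\Max(R)\cap\Supp_R(N)\subseteq X$ combined with $N$ being finitely generated over $R$ forces $N=0$, hence $\partial(G)=\infty=u$.  If $u\leqslant 0$, the conclusion is immediate.  Otherwise each $\partial_{\p}(F)$ with $\p\in X$ is a finite nonnegative integer by Remark~\ref{remark:infty} (since $\p\in\Supp_R(N)$), and since $\dim_X(\p)\leqslant\dim(X)<\infty$, the infimum $u$ is attained as a finite positive integer $t$.  Part~(1) then gives $g\in G^{\oplus t}$ with $g_{\p}$ surjective for every $\p\in X$.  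To upgrade this to $g$ being globally surjective, I would form $C:=\coker(g)$, a quotient of $N^{\oplus t}$ and therefore finitely generated over $R$, and verify $C_{\m}=0$ for every $\m\in\Max(R)$: if $\m\not\in\Supp_R(N)$ then $N_{\m}=0$ forces $C_{\m}=0$, while if $\m\in\Max(R)\cap\Supp_R(N)\subseteq X$ then $g_{\m}$ is already surjective.  A finitely generated $R$-module that vanishes at every maximal ideal is zero, so $g$ is surjective and $\partial(G)\geqslant t=u$.

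The only substantive obstacle is the Surjective Lemma itself, whose proof is the bulk of the ensuing sections; once it is established, Theorem~\ref{theorem:sur-prelim} follows by the above generator-trimming-and-extension argument together with routine handling of degenerate cases and the standard maximal-ideal criterion for global surjectivity.
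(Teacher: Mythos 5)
Your proposal is correct and follows essentially the same route as the paper: for Part~(1) take a generating column of $F$, verify $(t,X,X)$-surjectivity, iterate the Surjective Lemma down to a column of length $t$, invoke Remark~\ref{remark:nXp}, and extend to $G$; for Part~(2) reduce to the case of a finite positive infimum and invoke Part~(1) together with $\Max(R)\cap\Supp_R(N)\subseteq X$. The only cosmetic difference is that the paper sets $n:=\mu_R(F)$ and proves $n\geqslant t$ via the fiber $\kappa(\q)$, whereas you pad the generating tuple with zeros so that the Surjective Lemma can be applied at least once; both are sound, and your spelled-out cokernel argument at the end of Part~(2) is the same check the paper leaves implicit.
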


\begin{proof}[Proof of Theorem~\ref{theorem:sur-prelim}, modulo the Surjective Lemma]
(1)  We may assume that $X\neq\varnothing$.  Let $n:=\mu_R(F)$.  Since $t$ is a positive integer, $n$ is a positive integer.  Let $f_1,\ldots,f_n\in F$ such that $F=Rf_1+\cdots+Rf_n$; let $f:=(f_1,\ldots,f_n)^{\top}$; and let $\q\in X$.  Then there exists a $t\times n$ matrix $B$ with entries in $R$ such that $(Bf)_{\q}=B_{\q}f_{\q}$ is surjective.  Hence $B\otimes 1_{\kappa(\q)}$ represents a surjective $\kappa(\q)$-linear map from $N^{\oplus n}\otimes \kappa(\q)$ to $N^{\oplus t}\otimes \kappa(\q)$.  Since $N\otimes \kappa(\q)$ is finitely generated over $\kappa(\q)$, we see that $n\geqslant t$.  Thus $f$ is $(t,X,X)$-surjective.  Now, after $n-t$ applications of the Surjective Lemma, we obtain $f'_1,\ldots,f'_t\in F$ such that $f':=(f'_1,\ldots, f'_t)^{\top}$ is $(t,X,X)$-surjective.  By Remark~\ref{remark:nXp}, we see that $f'_{\p}$ is surjective for every $\p\in X$.  By assumption, the members $f'_1,\ldots, f'_t$ of $F$ can be extended to members $g_1,\ldots,g_t$ of $G$, respectively.  Let $g:=(g_1,\ldots,g_t)^{\top}\in G^{\oplus t}$.  Since $f'_{\p}$ is surjective for every $\p\in X$, we see that $g_{\p}$ is surjective for every $\p\in X$.

(2)  Let 
\[
t:=\inf\{\partial_{\p}(F)-\dim_X(\p):\p\in X\}.
\]
If $t\leqslant 0$, then there is nothing to prove.  If $t=\infty$, then $N=0$ by Remark~\ref{remark:infty}, and so $\partial(G)=t$.  Suppose then that $t$ is a positive integer.  By Part~(1) of this theorem, there exists $g\in G^{\oplus t}$ such that $g_{\p}$ is surjective for every $\p\in X$.  Since $\Max(R)\cap\Supp_R(N)\subseteq X$, we see that $g$ is surjective.  Hence $\partial(G)\geqslant t$.
\end{proof}

We now show that Theorem~\ref{theorem:sur-Ser} is an easy corollary of Theorem~\ref{theorem:sur-prelim}.

\begin{proof}[Proof of Theorem~\ref{theorem:sur-Ser}, modulo the Surjective Lemma]
Let $L=M$, and let $F=G$ denote $\Hom_S(M,N)$.  By our hypotheses, $F$ is finitely generated over $R$.  By Example~\ref{example:sur-X}, we see that $X$ is a subset of $\Supp_R(N)$ that is a basic set for $R$.  Also, $\Max(R)\cap\Supp_R(N)\subseteq X$.  Hence, by Part~(2) of Theorem~\ref{theorem:sur-prelim}, we get
\[
\begin{array}{rcl}
\sur_S(M,N) & = & \partial(G) \\
& \geqslant & \inf\{\partial_{\p}(F)-\dim_X(\p):\p\in X\} \\
& = & \inf\{\sur_{S_{\p}}(M_{\p},N_{\p})-\dim_X(\p):\p\in X\}. \qedhere \\
\end{array}
\]
\end{proof}

With a little more work, we can also prove Theorem~\ref{theorem:sur-Bas}, modulo the Surjective Lemma.  We accomplish this goal in the next section.

\section{A Proof of Theorem~\texorpdfstring{\ref{theorem:sur-Bas}}{0.3}, modulo the Surjective Lemma}\label{sec:sur-Bas}

Under the hypotheses of Theorem~\ref{theorem:sur-Ser}, we see that $\Hom_S(M,N)$ is finitely generated over $R$, and we see that $[\Hom_S(M,N)]_{\p}$ and $\Hom_{S_{\p}}(M_{\p},N_{\p})$ are isomorphic as $R_{\p}$-modules for every $\p\in\Spec(R)$.  Hence, a finite number of global $S$-linear maps govern all local surjective capacities.

This property does not follow from the more general hypotheses of Theorem~\ref{theorem:sur-Bas}.  As a result, we are not immediately in a position to apply the Surjective Lemma (Lemma~\ref{lemma:sur}) when attempting to prove Theorem~\ref{theorem:sur-Bas}.

In this section, we show that we can circumvent the issue by obtaining a finitely generated $R$-submodule $F$ of $\Hom_S(M,N)$ such that $\partial_{\m}(F)$ is sufficiently large for every $\m\in\Max(R)\cap\Supp_R(N)$.  We can then prove Theorem~\ref{theorem:sur-Bas}, modulo the Surjective Lemma.

With this goal in mind, we first show that, for every $R$-submodule $F$ of $\Hom_S(M,N)$, the function on $\Spec(R)$ taking $\p$ to $\partial_{\p}(F)$ is lower semicontinuous.  This lemma can be compared to~\cite[Lemma~7.2]{Bas};~\cite[Lemmas~3.5 and~4.1]{DSPY};~\cite[Theorem~1.5]{Hei};~\cite[\textit{Lemme~3}]{Ser}; and~\cite[Proposition~3]{Swa}.

\begin{lemma}\label{lemma:sur-closed}
Let $F$ be an $R$-submodule of $\Hom_S(M,N)$, and let $t$ be a nonnegative integer.  Then the set $\{\p\in\Spec(R):\partial_{\p}(F)> t\}$ is open, and so the set $\{\p\in\Spec(R):\partial_{\p}(F)\leqslant t\}$ is closed.  Hence, for every subspace $X$ of $\Spec(R)$, the set $Y_t:=\{\p\in X:\partial_{\p}(F)\leqslant t\}$ is closed in $X$.
\end{lemma}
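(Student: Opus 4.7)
The plan is to show directly that the complement set $\{\p \in \Spec(R) : \partial_\p(F) > t\}$ is open by producing, around each of its points, an explicit open neighborhood on which the required surjection persists. The claim about closed sets then follows by taking complements, and the subspace statement is immediate from the definition of the subspace topology once the ambient openness is established.

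Fix $\p \in \Spec(R)$ with $\partial_\p(F) > t$, i.e.\ $\partial_\p(F) \geqslant t+1$. By Definition~\ref{definition:del}, there exists $f = (f_1, \ldots, f_{t+1})^{\top} \in F^{\oplus(t+1)}$ such that $f_\p : M_\p \to N_\p^{\oplus(t+1)}$ is surjective. The key observation is that since $S$ is module-finite over $R$ and $N$ is finitely generated over $S$, the module $N$, and hence $N^{\oplus(t+1)}$, is finitely generated over $R$. Therefore the cokernel $C := \coker(f)$, being an $R$-quotient of $N^{\oplus(t+1)}$, is also finitely generated over $R$.

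Since $C$ is finitely generated over $R$, we have $\Supp_R(C) = \Var(\Ann_R(C))$, which is closed in $\Spec(R)$. Let $U := \Spec(R) \setminus \Supp_R(C)$. Localization is exact, so $C_\p = \coker(f_\p) = 0$, which gives $\p \in U$. For any $\q \in U$, we have $C_\q = 0$, so $f_\q$ is surjective, and thus $\partial_\q(F) \geqslant t+1 > t$. This shows $U \subseteq \{\q : \partial_\q(F) > t\}$, so the latter set is open, and its complement $\{\p \in \Spec(R) : \partial_\p(F) \leqslant t\}$ is closed.

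For the final sentence, given any subspace $X$ of $\Spec(R)$, the set $Y_t = X \cap \{\p \in \Spec(R) : \partial_\p(F) \leqslant t\}$ is the intersection of $X$ with a closed subset of $\Spec(R)$, hence closed in the subspace topology on $X$. I do not foresee a serious obstacle; the only point that requires care is recognizing that the module-finite hypothesis on $S/R$ is exactly what makes $C$ finitely generated over $R$ (so that the support is closed), since $f$ is only assumed to be an $S$-linear map between modules that a priori need not be finitely generated over $R$ (here $M$ is not required to be finitely generated at all).
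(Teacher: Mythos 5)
Your proof is correct and follows essentially the same approach as the paper: both proofs reduce to noticing that $C=\coker f$ is finitely generated over $R$, so that the vanishing of $C_\p$ propagates to an open neighborhood. The only cosmetic difference is that you take $U=\Spec(R)\setminus\Supp_R(C)$ while the paper takes the basic open set $D(s)\subseteq U$ for a single element $s\in R-\p$ with $sC=0$; your explicit justification that $N$ (and hence $C$) is finitely generated over $R$, using that $S$ is module-finite over $R$, is a welcome addition that the paper leaves implicit.
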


\begin{proof}
Let $\p\in\Spec(R)$ such that $\partial_{\p}(F)>t$; let $f\in F^{\oplus (t+1)}$ such that $f_{\p}$ is surjective; and let $C:=\coker f$.  Since $C_{\p}=0$ and since $C$ is finitely generated over $R$, there is an element $s\in R-\p$ such that $sC=0$.  Let $U:=\{\q\in\Spec(R): s\not\in\q\}$.  Then $C_{\q}=0$ for every $\q\in U$.  Hence $U$ is an open neighborhood of $\p$ such that $\partial_{\q}(F)>t$ for every $\q\in U$. Thus the set $\{\p\in\Spec(R): \partial_{\p}(F)>t\}$ is open.  This proves the first claim of the lemma.  The last two claims of the lemma follow from the first claim.
\end{proof}

Using the previous lemma, we can show that, under the hypotheses of Part~(1) of Theorem~\ref{theorem:sur-Bas}, there exists a finitely generated $R$-submodule $F$ of $\Hom_S(M,N)$ such that $\partial_{\m}(F)\geqslant t+\dim(Y)$ for every $\m\in Y$.  This is, in fact, a consequence of a more general statement:

\begin{lemma}\label{lemma:sur-fg}
Suppose that $M$ is a direct summand of a direct sum of finitely presented right $S$-modules.  Let $X$ be a Noetherian subspace of $\Supp_R(N)$, and suppose that $\dim(X)<\infty$.  Let $t$ be a positive integer, and suppose that $\sur_{S_{\p}}(M_{\p},N_{\p})\geqslant t+\dim(X)$ for every $\p\in X$.  Then there exists a finitely generated $R$-submodule $F$ of $\Hom_S(M,N)$ such that $\partial_{\p}(F)\geqslant t+\dim(X)$ for every $\p\in X$.
\end{lemma}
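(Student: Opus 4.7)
Set $T := t + \dim(X)$; I may assume $X \neq \varnothing$ so that $T \geqslant 1$. My plan is to produce, for each $\p \in X$, a finitely generated $R$-submodule $F^{(\p)}$ of $\Hom_S(M,N)$ satisfying $\partial_\p(F^{(\p)}) \geqslant T$, and then to glue the $F^{(\p)}$ together using the Noetherian hypothesis on $X$ and the lower semicontinuity supplied by Lemma~\ref{lemma:sur-closed}.

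For the local construction, fix $\p \in X$ and choose (by hypothesis) a surjection $g\colon M_\p \to N_\p^{\oplus T}$. Pick generators $n_1,\ldots,n_m$ of $N$ over $S$, lift each standard generator $e_k \otimes n_j$ of $N_\p^{\oplus T}$ to a preimage in $M_\p$, and clear denominators so that all preimages take the form $m_{j,k}/s$ with $m_{j,k} \in M$ and a common $s \in R \setminus \p$. Write $M \oplus M'' = M^* := \bigoplus_{i \in I} M_i$ with each $M_i$ finitely presented, let $\iota\colon M \hookrightarrow M^*$ and $\pi\colon M^* \twoheadrightarrow M$ be the structure maps, and let $L_0 := \bigoplus_{i \in I_0} M_i$, where $I_0 \subseteq I$ is the finite subset supporting the finitely many $\iota(m_{j,k})$. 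Because $L_0$ is finitely presented over $S$, one has $\Hom_S(L_0, N^{\oplus T})_\p \cong \Hom_{S_\p}((L_0)_\p, N_\p^{\oplus T})$, so the composite $g \circ \alpha_\p$, with $\alpha := \pi|_{L_0}\colon L_0 \to M$, is represented by some fraction $h_\p/s'$ with $h \in \Hom_S(L_0, N^{\oplus T})$ and $s' \in R \setminus \p$. Extending $h$ by zero to $\bar h\colon M^* \to N^{\oplus T}$ and setting $f := \bar h \circ \iota$ with components $f_1,\ldots,f_T \in \Hom_S(M,N)$, a direct calculation using $\pi \circ \iota = \mathrm{id}_M$ and $\iota(m_{j,k}) \in L_0$ yields $f_\p(m_{j,k}/1) = ss'(e_k \otimes n_j)$; since $ss'$ is a unit in $R_\p$, $f_\p$ hits a generating set of $N_\p^{\oplus T}$ and is therefore surjective. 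Taking $F^{(\p)} := Rf_1 + \cdots + Rf_T$ then gives $\partial_\p(F^{(\p)}) \geqslant T$.

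For the gluing step, Lemma~\ref{lemma:sur-closed} tells me that $U_\p := \{\q \in X : \partial_\q(F^{(\p)}) \geqslant T\}$ is an open neighborhood of $\p$ in $X$, so $\{U_\p\}_{\p \in X}$ is an open cover of $X$. Because $X$ is Noetherian, the cover admits a finite subcover $U_{\p_1},\ldots,U_{\p_r}$, and $F := F^{(\p_1)} + \cdots + F^{(\p_r)}$ is a finitely generated $R$-submodule of $\Hom_S(M,N)$ satisfying $\partial_\q(F) \geqslant \partial_\q(F^{(\p_j)}) \geqslant T$ whenever $\q \in U_{\p_j}$. The main obstacle is the local construction: for a general $M$ the natural map $\Hom_S(M,N)_\p \to \Hom_{S_\p}(M_\p,N_\p)$ need not be surjective, so local $S_\p$-linear maps cannot always be lifted globally. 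The hypothesis that $M$ is a direct summand of a direct sum of finitely presented modules is used precisely to funnel the lifting problem through the finitely presented auxiliary module $L_0$, where Hom-localization is well-behaved.
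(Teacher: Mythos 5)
Your proof is correct, and the overall strategy---exploit the semicontinuity from Lemma~\ref{lemma:sur-closed} together with the Noetherian topology on $X$ to assemble local data---matches the paper's. The two arguments diverge in presentation at both stages, though. For the gluing step, the paper argues by contradiction: it forms the closed sets $Y(G) := \{\p \in X : \partial_\p(G) \leqslant t + \dim(X) - 1\}$ as $G$ ranges over all finitely generated $R$-submodules of $\Hom_S(M,N)$, extracts a minimal member of this family via the descending chain condition on closed subsets of the Noetherian space $X$, and shows that this minimal $Y(G')$ must be empty by enlarging $G'$ at a point $\q \in Y(G')$. You instead argue directly, observing that the sets $U_\p = X \setminus Y(F^{(\p)})$ form an open cover of $X$ and invoking quasi-compactness of Noetherian spaces to extract a finite subcover, then summing the corresponding $F^{(\p_j)}$. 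These are dual, equivalent uses of the Noetherian hypothesis; yours is arguably the more transparent of the two since it avoids proof by contradiction. For the local step, the paper simply asserts ``By our hypothesis on $M$, there exist $h_1,\ldots,h_{u+1}\in\Hom_S(M,N)$ such that $h_\q$ is surjective'' without elaboration; you fill in this gap, routing the lift through a finitely presented direct summand $L_0$ of a direct-sum presentation of $M$ and using the isomorphism $[\Hom_S(L_0, N^{\oplus T})]_\p \cong \Hom_{S_\p}((L_0)_\p, N_\p^{\oplus T})$ valid for finitely presented $L_0$. Your computation $f_\p(m_{j,k}/1) = ss'(e_k \otimes n_j)$ is correct, and this added detail usefully makes explicit exactly where the direct-summand hypothesis on $M$ enters the argument.
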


\begin{proof}
Let $u:=(t-1)+\dim(X)$, and let $\mathscr{G}$ denote the collection of all finitely generated $R$-submodules of $\Hom_S(M,N)$.  For every $G\in\mathscr{G}$, let $Y(G):=\{\p\in X:\partial_{\p}(G)\leqslant u\}$, and let $\mathscr{Y}:=\{Y(G):G\in\mathscr{G}\}$.  We aim to prove that $\varnothing\in\mathscr{Y}$.  Suppose not.  Then $X$ is nonempty.  By Lemma~\ref{lemma:sur-closed}, we see that $Y(G)$ is closed in $X$ for every $G\in\mathscr{G}$.  Since $X$ is Noetherian and nonempty, there is $G'\in\mathscr{G}$ such that $Y(G')$ is a minimal member of~$\mathscr{Y}$.  By assumption, $Y(G')\neq\varnothing$, so let $\q\in Y(G')$.  By our hypothesis on $M$, there exist $h_1,\ldots,h_{u+1}\in\Hom_S(M,N)$ such that $h:=(h_1,\ldots,h_{u+1})^{\top}$ has the property that $h_{\q}$ is surjective.  Let $H:=G'+Rh_1+\cdots+Rh_{u+1}$.  Then $H\in\mathscr{G}$, and $\partial_{\q}(H)\geqslant u+1=t+\dim(X)$.  Thus $\q\in Y(G')-Y(H)$, and so $Y(H)\subsetneq Y(G')$, contradicting the minimality of $Y(G')$ in~$\mathscr{Y}$.  Thus $\varnothing\in\mathscr{Y}$, and so there exists a finitely generated $R$-submodule $F$ of $\Hom_S(M,N)$ such that $Y(F)=\varnothing$.  Consequently, $\partial_{\p}(F)\geqslant u+1=t+\dim(X)$ for every $\p\in X$. 
\end{proof}

We now prove Theorem~\ref{theorem:sur-Bas}, modulo the Surjective Lemma.

\begin{proof}[Proof of Theorem~\ref{theorem:sur-Bas}, modulo the Surjective Lemma]
Let $X:=j$-$\Spec(R)\cap\Supp_R(N)$.  By the corollary following Theorem~1 in~\cite{Swa}, we see that $\dim(X)=\dim(Y)<\infty$ and that $X$ is Noetherian since $Y$ is Noetherian.  By Example~\ref{example:sur-X}, we see that $X$ is a basic set for $R$.

(1)  Since $\sur_{S_{\m}}(M_{\m},N_{\m})\geqslant t+\dim(Y)$ for every $\m\in Y$, we see that $\sur_{S_{\p}}(M_{\p},N_{\p})\geqslant t+\dim(X)$ for every $\p\in X$.  By Lemma~\ref{lemma:sur-fg}, there exists a finitely generated $R$-submodule $F$ of $\Hom_S(M,N)$ such that $\partial_{\p}(F)\geqslant t+\dim(X)\geqslant t+\dim_X(\p)$ for every $\p\in X$.  Now, let $L=M$ and $G=\Hom_S(M,N)$.  Then Part~(2) of Theorem~\ref{theorem:sur-prelim} tells us that $\sur_S(M,N)=\partial(G)\geqslant t$.

(2) Certainly, if $\sur_S(M,N)=\infty$, then $\sur_{S_{\m}}(M_{\m},N_{\m})=\infty$ for every $\m\in Y$.  Suppose then that $\sur_{S_{\m}}(M_{\m},N_{\m})=\infty$ for every $\m\in Y$.  Then, $\sur_{S_{\m}}(M_{\m},N_{\m})\geqslant t+\dim(Y)$ for every $\m\in Y$ and for every positive integer $t$.  Hence, by Part~(1) of this theorem, $\sur_S(M,N)\geqslant t$ for every positive integer $t$.  Thus $\sur_S(M,N)=\infty$.

(3)  Let
\[
t:=\min\{\sur_{S_{\m}}(M_{\m},N_{\m}):\m\in Y\}-\dim(Y).
\]
Then $\sur_{S_{\m}}(M_{\m},N_{\m})\geqslant t+\dim(Y)$ for every $\m\in Y$.  If $t\leqslant 0$, then certainly $\sur_S(M,N)\geqslant t$.  Suppose then that $t$ is a positive integer.  Then, by Part~(1) of this theorem, $\sur_S(M,N)\geqslant~t$.
\end{proof}

We can also prove the following variations of Lemma~\ref{lemma:sur-fg} and Theorems~\ref{theorem:sur-prelim} and~\ref{theorem:sur-Bas}.  These variations are noteworthy in the sense that they do not require $M$ to be a direct summand of a direct sum of finitely presented right $S$-modules.  We omit the proofs since they are similar to the proofs of Lemma~\ref{lemma:sur-fg} and Theorems~\ref{theorem:sur-prelim} and~\ref{theorem:sur-Bas}.

\begin{lemma}\label{lemma:sur-fg-var}
Let $F$ be an $R$-submodule of $\Hom_S(M,N)$.  Let $X$ be a Noetherian subspace of $\Supp_R(N)$, and suppose that $\dim(X)<\infty$.  Let $t$ be a positive integer, and suppose that $\partial_{\p}(F)\geqslant t+\dim(X)$ for every $\p\in X$.  Then there exists a finitely generated $R$-submodule $F'$ of $F$ such that $\partial_{\p}(F')\geqslant t+\dim(X)$ for every $\p\in X$. 
\end{lemma}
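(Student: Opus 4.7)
The plan is to mimic the Noetherian induction carried out in the proof of Lemma~\ref{lemma:sur-fg}, replacing the ambient module $\Hom_S(M,N)$ by $F$ itself. Set $u:=(t-1)+\dim(X)$, and let $\mathscr{G}$ denote the collection of all finitely generated $R$-submodules of $F$. For each $G\in\mathscr{G}$, define $Y(G):=\{\p\in X:\partial_{\p}(G)\leqslant u\}$, and let $\mathscr{Y}:=\{Y(G):G\in\mathscr{G}\}$. By Lemma~\ref{lemma:sur-closed}, each $Y(G)$ is closed in $X$. The goal is to show that $\varnothing\in\mathscr{Y}$, since any $F'\in\mathscr{G}$ with $Y(F')=\varnothing$ will then satisfy $\partial_{\p}(F')\geqslant u+1=t+\dim(X)$ for every $\p\in X$.

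Assume toward a contradiction that $\varnothing\not\in\mathscr{Y}$. In particular $X\neq\varnothing$, and the descending chain condition on closed subsets of the Noetherian space $X$ produces some $G'\in\mathscr{G}$ for which $Y(G')$ is minimal in $\mathscr{Y}$. By assumption $Y(G')\neq\varnothing$, so we may fix $\q\in Y(G')$.

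The key adaptation occurs in how we enlarge $G'$. In Lemma~\ref{lemma:sur-fg}, the hypothesis on $M$ was what guaranteed the existence of $u+1$ elements of $\Hom_S(M,N)$ whose column is $\q$-surjective. Here, the hypothesis $\partial_{\q}(F)\geqslant t+\dim(X)=u+1$ directly supplies $h_1,\ldots,h_{u+1}\in F$ such that $h:=(h_1,\ldots,h_{u+1})^{\top}$ has $h_{\q}$ surjective. Setting $H:=G'+Rh_1+\cdots+Rh_{u+1}$ gives $H\in\mathscr{G}$ (since $H$ is finitely generated and contained in $F$) with $\partial_{\q}(H)\geqslant u+1$. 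Thus $\q\in Y(G')-Y(H)$, which forces $Y(H)\subsetneq Y(G')$ and contradicts the minimality of $Y(G')$ in $\mathscr{Y}$. We conclude that $\varnothing\in\mathscr{Y}$, producing the desired $F'$.

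I do not foresee any genuine obstacle; the adjustment is a direct substitution. The one subtlety worth flagging is that the enlarging elements $h_1,\ldots,h_{u+1}$ must come from $F$ rather than from the larger module $\Hom_S(M,N)$, but this is exactly what the definition of $\partial_{\q}(F)$ guarantees, so the Noetherian argument goes through verbatim without any appeal to the hypothesis on $M$ used in Lemma~\ref{lemma:sur-fg}.
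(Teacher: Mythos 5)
Your proof is correct and follows exactly the route the paper indicates (the paper omits the proof, noting it is similar to that of Lemma~\ref{lemma:sur-fg}): run the same Noetherian induction on closed sets $Y(G)$, with the hypothesis $\partial_{\q}(F)\geqslant t+\dim(X)$ supplying the enlarging elements $h_1,\ldots,h_{u+1}\in F$ directly in place of the hypothesis on $M$ used in Lemma~\ref{lemma:sur-fg}. The one point worth keeping visible, which you do flag, is that Lemma~\ref{lemma:sur-closed} applies to the finitely generated submodules $G\subseteq F$ because they are still $R$-submodules of $\Hom_S(M,N)$, so $Y(G)$ is closed in $X$ with no appeal to any hypothesis on $M$.
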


\begin{theorem}\label{theorem:sur-Bas-var}
Let $L$ be an $S$-submodule of $M$; let $F$ be an $R$-submodule of $\Hom_S(L,N)$; and let $G$ be an $R$-submodule of $\Hom_S(M,N)$.  Suppose that every member of $F$ can be extended to a member of $G$.  Then the following statements hold:
\begin{enumerate}
\item Let $X$ be a subset of $\Supp_R(N)$ that is a basic set for $R$ with $\dim(X)<\infty$.  Let $t$ be a positive integer, and suppose that $\partial_{\p}(F)\geqslant t+\dim(X)$ for every $\p\in X$.  Then there exists $g\in G^{\oplus t}$ such that $g_{\p}$ is surjective for every $\p\in X$.
\item Suppose that $Y:=\Max(R)\cap\Supp_R(N)$ is Noetherian with $\dim(Y)<\infty$.  Then the following statements hold:
\begin{enumerate}
\item Let $t$ be a positive integer, and suppose that $\partial_{\m}(F)\geqslant t+\dim(Y)$ for every $\m\in Y$.  Then $\partial(G)\geqslant t$.
\item If $\partial_{\m}(F)=\infty$ for every $\m\in Y$, then $\partial(G)=\infty$.  Hence $\partial(F)=\infty$ if and only if $\partial_{\p}(F)=\infty$ for every $\m\in Y$.
\item Suppose that $\partial_{\n}(F)<\infty$ for some $\n\in Y$.  Then
\[
\partial(G)\geqslant\min\{\partial_{\m}(F):\m\in Y\}-\dim(Y).
\]
\end{enumerate}
\end{enumerate}
\end{theorem}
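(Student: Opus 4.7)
The plan is to mirror the arguments for Lemma~\ref{lemma:sur-fg}, Theorem~\ref{theorem:sur-prelim}, and Theorem~\ref{theorem:sur-Bas}, with two small adjustments: in the Noetherian-induction step I pick new generators directly from $F$ rather than from $\Hom_S(M,N)$, which is precisely what removes the need for the hypothesis that $M$ be a direct summand of a direct sum of finitely presented right $S$-modules; and in the final passage from surjectivity after localization at every $\p\in X$ to global surjectivity I use that $\coker g$ is a quotient of $N^{\oplus t}$, so its support is already contained in $\Supp_R(N)$.

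For Lemma~\ref{lemma:sur-fg-var}, let $u:=(t-1)+\dim(X)$ and let $\mathscr{G}$ be the collection of finitely generated $R$-submodules of $F$; for $G'\in\mathscr{G}$ set $Y(G'):=\{\p\in X:\partial_\p(G')\leqslant u\}$, which is closed in $X$ by Lemma~\ref{lemma:sur-closed}. Since $X$ is Noetherian, if no $Y(G')$ were empty there would be a minimal one; pick $\q\in Y(G')$, use $\partial_\q(F)\geqslant t+\dim(X)\geqslant u+1$ to produce $h_1,\ldots,h_{u+1}\in F$ whose combined map $(h_1,\ldots,h_{u+1})^{\top}$ becomes surjective after localizing at $\q$, and enlarge $G'$ to $H:=G'+Rh_1+\cdots+Rh_{u+1}\in\mathscr{G}$ to contradict the minimality of $Y(G')$. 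Hence some $F'\in\mathscr{G}$ has $Y(F')=\varnothing$, which is exactly the desired conclusion.

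For Part~(1) of the theorem, Lemma~\ref{lemma:sur-fg-var} produces a finitely generated $R$-submodule $F'$ of $F$ with $\partial_\p(F')\geqslant t+\dim(X)\geqslant t+\dim_X(\p)$ for every $\p\in X$; since every member of $F'\subseteq F$ still extends to a member of $G$, Part~(1) of Theorem~\ref{theorem:sur-prelim} applied to $L$, $F'$, and $G$ yields $g\in G^{\oplus t}$ with $g_\p$ surjective for every $\p\in X$.

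For Part~(2), set $X:=j\textnormal{-}\Spec(R)\cap\Supp_R(N)$, which by the corollary following Theorem~1 in~\cite{Swa} is Noetherian of dimension $\dim(Y)<\infty$ and, by Example~\ref{example:sur-X}, a basic set for $R$. For~(a), any $\p\in X$ is contained in some $\m\in Y$ because $\Ann_R(N)\subseteq\p$ forces every maximal ideal over $\p$ into $\Supp_R(N)$; localizing a tuple realizing $\partial_\m(F)\geqslant t+\dim(Y)$ further to $\p$ therefore gives $\partial_\p(F)\geqslant t+\dim(X)$ for every $\p\in X$, and Part~(1) now provides $g\in G^{\oplus t}$ with $g_\p$ surjective for every such $\p$. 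Because $\coker g$ is a quotient of $N^{\oplus t}$ its support lies in $\Supp_R(N)$, so vanishing at every $\m\in Y=\Max(R)\cap\Supp_R(N)$ forces vanishing at every maximal ideal, whence $g$ is surjective and $\partial(G)\geqslant t$. Parts~(b) and~(c) follow formally from~(a) exactly as the analogous statements in Theorem~\ref{theorem:sur-Bas} follow from its Part~(1) (applying~(a) for each positive integer $t$ in~(b), and setting $t:=\min\{\partial_\m(F):\m\in Y\}-\dim(Y)$ in~(c), with the trivial case $t\leqslant 0$ handled separately). The only mildly delicate point is the localization argument that upgrades the hypothesis from $\m\in Y$ to all $\p\in X$; everything else is a direct transcription of the earlier proofs.
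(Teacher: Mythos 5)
Your proposal is correct and follows essentially the route the paper intends: the paper omits this proof precisely because it is obtained by the same Noetherian-induction argument as Lemma~\ref{lemma:sur-fg} and the reduction through Theorem~\ref{theorem:sur-prelim}, and you correctly identify that replacing the lifting step (which needed $M$ to be a direct summand of a direct sum of finitely presented modules) with generators drawn directly from $F$ via the hypothesis on $\partial_{\q}(F)$ is the only change required. The passage from the maximal ideals $Y$ to the basic set $X$ via $\partial_{\p}(F)\geqslant\partial_{\m}(F)$ for $\p\subseteq\m$, and the final use of $\Supp_R(\coker g)\subseteq\Supp_R(N)$, are also exactly the steps the paper uses in the proof of Theorem~\ref{theorem:sur-Bas}.
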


In the next section, we continue working toward a proof of the Surjective Lemma.

\section{The Set \texorpdfstring{$\mathit{\Lambda}$}{Lambda}}\label{sec:Lambda}

Assuming the hypotheses of the Surjective Lemma (Lemma~\ref{lemma:sur}), we show in this section that there is a finite subset $\mathit{\Lambda}$ of $X$ with a crucial property:  If there exists an invertible $n\times n$ matrix $A$ with entries in $R$ such that the first $n-1$ components of $Af:=(f'_1,\ldots,f'_n)^{\top}$ form a map $f':=(f'_1,\ldots,f'_{n-1})^{\top}$ that is $(t,X,\mathit{\Lambda})$-surjective, then $f'$ is $(t,X,X)$-surjective.  In other words, we show that proving the Surjective Lemma, which may involve infinitely many prime ideals of $R$, reduces to proving a statement about $\mathit{\Lambda}$, a finite set of prime ideals.

We begin by covering two properties of basic sets for $R$ with Proposition~\ref{proposition:basic}.  Property~(1), which holds not only for every basic set but also for every Noetherian topological space, is proved, for example, in~\cite[Proposition~1.5]{Har}.  Property~(2) is proved for the special case of $X:=j$-$\Spec(R)$ in~\cite[Proposition~2]{Swa}, although the more general result here can be proved by similar means.

\begin{proposition}\label{proposition:basic}
Let $X$ be a basic set for $R$.  Then $X$ has the following properties:
\begin{enumerate}
\item Every closed subset of $X$ is a union of finitely many closed irreducible subsets of $X$.
\item Every closed irreducible subset of $X$ has a unique generic point.
\end{enumerate}
\end{proposition}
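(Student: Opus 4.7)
The plan is to handle the two properties separately: Property~(1) by classical Noetherian induction on closed subsets, and Property~(2) by constructing a generic point as the intersection of all primes in the irreducible set, using the defining axiom of a basic set to deposit this intersection into $X$.

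For Property~(1), I would let $\mathscr{S}$ be the collection of closed subsets of $X$ that cannot be written as a finite union of closed irreducible subsets of $X$. If $\mathscr{S}$ were nonempty, the Noetherian hypothesis on $X$ would produce a minimal element $Y \in \mathscr{S}$; then $Y$ itself is not irreducible, so $Y = Y_1 \cup Y_2$ for proper closed subsets $Y_1, Y_2 \subsetneq Y$. By minimality, neither $Y_i$ lies in $\mathscr{S}$, so each is a finite union of closed irreducibles, and hence so is $Y$---a contradiction. This is the standard Noetherian-induction argument, essentially \cite[Proposition~1.5]{Har}.

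For Property~(2), let $Y$ be a closed irreducible subset of $X$ and set $\p := \bigcap_{\q \in Y} \q$. The key substeps are to show that $\p$ is prime, that $\p \in X$, and that $\p$ serves as a unique generic point for $Y$. Primality will follow from irreducibility: if $ab \in \p$ with $a, b \notin \p$, then one can find witnesses $\q_1, \q_2 \in Y$ with $a \notin \q_1$ and $b \notin \q_2$, whereupon $Y \cap \Var((a))$ and $Y \cap \Var((b))$ are proper closed subsets of $Y$ whose union is all of $Y$ (since $ab \in \p$ lies in every $\q \in Y$, forcing $a \in \q$ or $b \in \q$), contradicting irreducibility. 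Now $\p$ is a prime ideal expressible as an intersection of members of $X$, so the basic-set hypothesis places $\p$ in $X$.

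To finish, I would write $Y = \Var(I) \cap X$ for some ideal $I \subseteq R$. Since $I \subseteq \q$ for every $\q \in Y$, we have $I \subseteq \p$, so $\p \in \Var(I) \cap X = Y$. The closure of $\{\p\}$ in $X$ is $\Var(\p) \cap X$, which contains $Y$ (because every $\q \in Y$ contains $\p$) and is contained in $Y$ (because $Y$ is closed and $\p \in Y$). Hence $Y = \overline{\{\p\}}_X$, and uniqueness is then immediate: any two generic points of $Y$ would each lie in the closure of the other, forcing mutual containment and therefore equality of the prime ideals. The main obstacle is the primality step, which is the single point at which topological irreducibility of $Y$ must be converted into an algebraic statement about $\p$, and from which the basic-set axiom becomes indispensable for concluding $\p \in X$; the remainder is bookkeeping with the subspace topology. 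The argument essentially parallels Swan's treatment of the $j$-$\Spec(R)$ case in \cite[Proposition~2]{Swa}.
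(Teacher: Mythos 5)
Your argument is correct, and it follows precisely the route the paper indicates: the paper does not spell out a proof but cites Hartshorne's \cite[Proposition~1.5]{Har} for Property~(1) (your Noetherian-induction argument is exactly that) and Swan's \cite[Proposition~2]{Swa} for Property~(2), noting that the general case follows by similar means, which is what you carry out by forming $\p := \bigcap_{\q\in Y}\q$, proving primality from irreducibility, and invoking the basic-set axiom to place $\p\in X$. Your write-up is a faithful and complete version of what the paper leaves as a citation.
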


In the next lemma, we define the set $\mathit{\Lambda}$ mentioned earlier in this section.  We omit the proof of this lemma since it is similar to the proofs of~\cite[Lemmas~3.6 and~4.2]{DSPY}.

\begin{lemma}\label{lemma:sur-Lambda}
Let $F$ be a finitely generated $R$-submodule of $\Hom_S(M,N)$, and let $X$ be a subset of $\Supp_R(N)$ that is a basic set for $R$.  Then there exists a finite subset $\mathit{\Lambda}$ of $X$ such that, for every $\p\in X-\mathit{\Lambda}$, there exists $\q\in\mathit{\Lambda}$ with the properties that $\q\subsetneq \p$ and $\partial_{\q}(F)=\partial_{\p}(F)$.
\end{lemma}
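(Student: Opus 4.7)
The plan is to take $\Lambda$ to be the union of the generic points of the irreducible components of the finitely many closed sub-level sets of the function $\p \mapsto \partial_\p(F)$ on $X$. To see that only finitely many sub-level sets are relevant, I would first establish a uniform bound on $\partial_\bullet(F)$ on $X$: choose generators $g_1,\ldots,g_n$ of $F$ over $R$ and set $g := (g_1,\ldots,g_n)^{\top}$. Any $f \in F^{\oplus(n+1)}$ has the form $Ag$ for some $(n{+}1)\times n$ matrix $A$ with entries in $R$, so surjectivity of $f_\p$ forces the induced map $A_\p : N_\p^{\oplus n} \to N_\p^{\oplus(n+1)}$ to be surjective. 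Reducing modulo $\p R_\p$, as in Remark~\ref{remark:infty}, yields a surjection of finite-dimensional $\kappa(\p)$-vector spaces $(N\otimes_R \kappa(\p))^{n} \twoheadrightarrow (N\otimes_R \kappa(\p))^{n+1}$, forcing $N\otimes_R\kappa(\p)=0$; Nakayama then gives $N_\p = 0$, contradicting $\p \in X \subseteq \Supp_R(N)$. Hence $\partial_\p(F) \leqslant n$ for every $\p \in X$.

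Next, for each $u \in \{0,1,\ldots,n\}$, the set $Z_u := \{\p \in X : \partial_\p(F) \leqslant u\}$ is closed in $X$ by Lemma~\ref{lemma:sur-closed}. Since $X$ is a basic set it is Noetherian, so $Z_u$ is Noetherian, and Proposition~\ref{proposition:basic} expresses $Z_u$ as a finite union of closed irreducible subsets of $X$, each with a unique generic point in $X$. I would let $\Gamma_u$ denote the (finite) set of these generic points and set $\Lambda := \bigcup_{u=0}^{n} \Gamma_u$, which is then a finite subset of $X$.

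Finally, I would verify the defining property of $\Lambda$. Let $\p \in X - \Lambda$ and put $u := \partial_\p(F)$. Then $\p \in Z_u$, so $\p$ lies in some closed irreducible component $Z'$ of $Z_u$ whose generic point $\q$ belongs to $\Gamma_u \subseteq \Lambda$. Because $Z_u$ is closed in $X$, the component $Z'$ coincides with $\{\mathfrak{r} \in X : \q \subseteq \mathfrak{r}\}$, so $\q \subseteq \p$, and the inclusion is strict because $\q \in \Lambda$ while $\p \not\in \Lambda$. On one hand $\partial_\q(F) \leqslant u$ since $\q \in Z_u$; on the other hand $R_\p$ is a further localization of $R_\q$, so any $f \in F^{\oplus s}$ with $f_\q$ surjective gives $f_\p$ surjective, whence $\partial_\q(F) \geqslant \partial_\p(F) = u$. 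Thus $\partial_\q(F) = \partial_\p(F)$, as required.

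The argument is largely routine once the ingredients are assembled; the only step requiring a genuine observation is the uniform bound $\partial_\p(F) \leqslant n$ on $X$, which is what keeps the collection of sub-level sets (and hence $\Lambda$) finite. Everything else is a standard Noetherian decomposition exploiting the lower semicontinuity supplied by Lemma~\ref{lemma:sur-closed} together with the two features of basic sets in Proposition~\ref{proposition:basic}.
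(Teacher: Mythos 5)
Your strategy (uniform bound on $\partial_\bullet(F)$ over $X$, closed sub-level sets $Z_u$, finite irreducible decomposition via Proposition~\ref{proposition:basic}, and $\mathit{\Lambda}$ as the set of generic points) is the right one and almost certainly matches the argument the paper is pointing to in~\cite[Lemmas~3.6 and~4.2]{DSPY}. The uniform bound $\partial_{\p}(F)\leqslant n$ is argued exactly as in Remark~\ref{remark:infty}, and the Noetherian decomposition is standard. However, there is a slip in the final verification: you assert that, since $\q\subsetneq\p$, ``$R_{\p}$ is a further localization of $R_{\q}$,'' and conclude that $f_{\q}$ surjective forces $f_{\p}$ surjective. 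Both halves are reversed. When $\q\subseteq\p$ it is $R_{\q}$ that is a further localization of $R_{\p}$ (namely $R_{\q}\cong (R_{\p})_{\q R_{\p}}$), and accordingly it is surjectivity of $f_{\p}$ that yields surjectivity of $f_{\q}$. Moreover the implication you wrote, even if it were true, would give $\partial_{\p}(F)\geqslant\partial_{\q}(F)$, which is the wrong direction for what you need. The correct statement --- $f_{\p}$ surjective implies $f_{\q}$ surjective --- gives $\partial_{\q}(F)\geqslant\partial_{\p}(F)$, which together with $\q\in Z_u$ (so $\partial_{\q}(F)\leqslant u=\partial_{\p}(F)$) yields the desired equality. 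With that direction corrected, the proof is complete and takes essentially the same route as the paper.
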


Assume the hypotheses of Part~(2) of Theorem~\ref{theorem:sur-prelim}, and define $\mathit{\Lambda}$ as in Lemma~\ref{lemma:sur-Lambda} with respect to $F$ and $X$.  In Part~(2) of Theorem~\ref{theorem:sur-prelim}, we give a lower bound on $\partial(G)$ whose expression involves all of the members of $X$, where $X$ could be an infinite set.  In Part~(1) of the following corollary, we express this same lower bound using only the members of the finite set $\mathit{\Lambda}$.  As a result, we can improve our expression of the lower bound on $\sur_S(M,N)$ in Theorem~\ref{theorem:sur-Ser} as well, and this is the content of Part~(2) of the following corollary.  Of course, since Theorems~\ref{theorem:sur-prelim} and~\ref{theorem:sur-Ser} rely on the Surjective Lemma, we still need to assume the truth of the Surjective Lemma for the following corollary.

\begin{corollary}\label{corollary:sur-prelim}
We make the following improvements to Theorems~\ref{theorem:sur-prelim} and~\ref{theorem:sur-Ser}:
\begin{enumerate}
\item Assume the hypotheses of Part~(2) of Theorem~\ref{theorem:sur-prelim}, and let $\mathit{\Lambda}$ be defined as in Lemma~\ref{lemma:sur-Lambda} with respect to $F$ and $X$.  Then
\[
\partial(G)\geqslant\inf\{\partial_{\p}(F)-\dim_X(\p):\p\in\mathit{\Lambda}\}.
\]
\item Assume the hypotheses of Theorem~\ref{theorem:sur-Ser}, and let $\mathit{\Lambda}$ be defined as in Lemma~\ref{lemma:sur-Lambda} with respect to $F:=\Hom_S(M,N)$ and $X$.  Then
\[
\sur_S(M,N)\geqslant\inf\{\sur_{S_{\p}}(M_{\p},N_{\p})-\dim_X(\p):\p\in\mathit{\Lambda}\}.
\]
\end{enumerate}
\end{corollary}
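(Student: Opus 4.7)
The plan is to show that, in each part, replacing the infimum over $X$ by the infimum over the finite set $\mathit{\Lambda}$ does not change its value, so that the sharpened conclusions follow immediately from Theorem~\ref{theorem:sur-prelim}(2) and Theorem~\ref{theorem:sur-Ser}. Since $\mathit{\Lambda} \subseteq X$, one direction of this equality of infima is trivial; the content lies in the reverse inequality, which says that every value $\partial_\p(F) - \dim_X(\p)$ for $\p \in X$ is matched, from below, by some value over $\mathit{\Lambda}$.

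For Part~(1), I would fix any $\p \in X - \mathit{\Lambda}$ and invoke Lemma~\ref{lemma:sur-Lambda} to obtain $\q \in \mathit{\Lambda}$ with $\q \subsetneq \p$ and $\partial_\q(F) = \partial_\p(F)$. The main step is a strict dimension drop: choosing a chain $\p = \p_0 \subsetneq \p_1 \subsetneq \cdots \subsetneq \p_d$ in $\Var(\p) \cap X$ realizing $d := \dim_X(\p)$ and prepending $\q$ yields a chain of length $d+1$ in $\Var(\q) \cap X$, so $\dim_X(\q) \geq \dim_X(\p) + 1$. Combined with $\partial_\q(F) = \partial_\p(F)$, this gives
\[
\partial_\q(F) - \dim_X(\q) \;\leq\; \partial_\p(F) - \dim_X(\p) - 1.
\]
Hence the infimum over $\mathit{\Lambda}$ is at most the infimum over $X$, and the opposite inequality is automatic since $\mathit{\Lambda} \subseteq X$. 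Part~(1) then follows from Theorem~\ref{theorem:sur-prelim}(2).

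For Part~(2), I would apply Part~(1) with $L = M$ and $F = G = \Hom_S(M,N)$, mirroring the deduction of Theorem~\ref{theorem:sur-Ser} from Theorem~\ref{theorem:sur-prelim}(2). Under the Noetherian hypotheses, $F$ is finitely generated over $R$, and $M$ is finitely presented over $S$, so $F_\p \cong \Hom_{S_\p}(M_\p,N_\p)$ for every $\p \in \Spec(R)$, giving $\partial_\p(F) = \sur_{S_\p}(M_\p,N_\p)$. Example~\ref{example:sur-X} confirms that $X$ is a basic set for $R$ contained in $\Supp_R(N)$, and $\Max(R) \cap \Supp_R(N) \subseteq X$ since every maximal ideal lies in $j$-$\Spec(R)$. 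Part~(1) then yields Part~(2) directly.

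The only point requiring genuine attention is the strict dimension inequality under $\q \subsetneq \p$, but that step is just a chain extension in the subspace topology of $\Spec(R)$; no deeper obstacle is anticipated, since the substantive content of both parts has already been absorbed into Lemma~\ref{lemma:sur-Lambda} and Theorems~\ref{theorem:sur-prelim} and~\ref{theorem:sur-Ser}.
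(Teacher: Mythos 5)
Your argument is correct and follows essentially the same route as the paper: both rest on the prime $\q \subsetneq \p$ furnished by Lemma~\ref{lemma:sur-Lambda} and the resulting strict drop $\dim_X(\q) > \dim_X(\p)$ to conclude $\inf_{\mathit{\Lambda}} = \inf_X$, then invoke Theorem~\ref{theorem:sur-prelim}(2) (and, for Part~(2), the localization isomorphism for $\Hom$ under the Noetherian hypotheses, exactly as in the paper's proof of Theorem~\ref{theorem:sur-Ser}). The only cosmetic difference is that the paper fixes a $\p_0$ realizing the infimum over $X$ and argues by contradiction that $\p_0 \in \mathit{\Lambda}$, whereas you bound each $X$-value from below by a $\mathit{\Lambda}$-value directly; these are interchangeable.
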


\begin{proof}[Proof of Corollary~\ref{corollary:sur-prelim}, modulo the Surjective Lemma]
(1)  Let
\[
t:=\inf\{\partial_{\p}(F)-\dim_X(\p):\p\in X\},
\]
and let
\[
u:=\inf\{\partial_{\p}(F)-\dim_X(\p):\p\in\mathit{\Lambda}\}.
\]
We will show that $t=u$.  Certainly $t\leqslant u$, so it remains to show that $t\geqslant u$.  If $t=\infty$, then certainly $t\geqslant u$.  Suppose then that $t$ is an integer so that $X\neq\varnothing$.  Let $\p_0\in X$ such that $t=\partial_{\p_0}(F)-\dim_X(\p_0)$.  Suppose, by way of contradiction, that $\p_0\not\in\mathit{\Lambda}$.  By Lemma~\ref{lemma:sur-Lambda}, there exists $\q_0\in\mathit{\Lambda}$ such that $\q_0\subsetneq\p_0$ and $\partial_{\q_0}(F)=\partial_{\p_0}(F)$.  Hence 
\[
t=\partial_{\p_0}(F)-\dim_X(\p_0)>\partial_{\q_0}(F)-\dim_X(\q_0)\geqslant t,
\]
a contradiction.  Thus $\p_0\in\mathit{\Lambda}$, and so $t\geqslant u$.  Thus $t=u$.  Now, by Part~(2) of Theorem~\ref{theorem:sur-prelim}, we see that $\partial(G)\geqslant t=u$.

(2)  Let $L=M$, and let $G=F$.  Since $F$ is finitely generated over $R$, Part~(1) of this corollary tells us that
\[
\begin{array}{rcl}
\sur_S(M,N) & = & \partial(G) \\
& \geqslant & \inf\{\partial_{\p}(F)-\dim_X(\p):\p\in\mathit{\Lambda}\} \\
& = & \inf\{\sur_{S_{\p}}(M_{\p},N_{\p})-\dim_X(\p):\p\in\mathit{\Lambda}\}. \qedhere \\
\end{array}
\]
\end{proof}

We now return to the task of reducing the proof of the Surjective Lemma to the study of a finite subset $\mathit{\Lambda}$ of $X$.  The following definition will be useful in subsequent work:

\begin{definition}
Let $n$ be a positive integer.  The symbol $\GL(n,R)$ refers to the group of all invertible $n\times n$ matrices with entries in $R$.  This group is called the \textit{general linear group of degree $n$ over $R$}.
\end{definition}

\begin{remark}
With respect to the previous definition, an $n\times n$ matrix with entries in $R$ is in $\GL(n,R)$ if and only if its determinant is a unit of $R$.
\end{remark}

In the following definition, we establish a large amount of notation that we will use in the remainder of this section and in the next three sections:

\begin{definition}\label{definition:sur-F}
Let $n\in\ZZ$ with $n\geqslant 2$; let $f:=(f_1,\ldots,f_n)^{\top}\in \Hom_S(M,N^{\oplus n})$; and let $\p\in\Supp_R(N)$.  Fix the following notation relative to $n$, $f$, and $\p$:  Let $F:=Rf_1+\cdots+Rf_n$; let $^-$ denote the functor $-\otimes_R \kappa(\p)$; and, for every matrix $\mathit{\Xi}:=(\xi_{i,j})$ with entries in $R$ or $R_{\p}$, let $\overline{\mathit{\Xi}}:=\left(\overline{\xi_{i,j}}\right)$.  If a matrix $A\in \GL(n,R)$ is given, then let $f^*:=Af:=(f'_1,\ldots,f'_n)^{\top}$; let $f':=(f'_1,\ldots,f'_{n-1})^{\top}$; and let $F':=Rf'_1+\cdots +Rf'_{n-1}$. 
\end{definition}

Upon proving the following lemma, we will be prepared to achieve the goal of this section.

\begin{lemma}\label{lemma:sur-del-one}
Let $n\in\ZZ$ with $n\geqslant 2$; let $f:=(f_1,\ldots,f_n)^{\top}$ $\in \Hom_S(M,N^{\oplus n})$; let $A\in \textnormal{\textbf{GL}}(n,R)$; and let $\p\in\Supp_R(N)$.  Then, relative to Definition~\ref{definition:sur-F}, we have $\partial_{\p}(F')\geqslant \partial_{\p}(F)-1$.
\end{lemma}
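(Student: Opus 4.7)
The plan hinges on the observation that, since $A\in\GL(n,R)$ is invertible, each $f_j$ is an $R$-linear combination of $f'_1,\ldots,f'_n$ via $A^{-1}$ and conversely, so $F=Rf'_1+\cdots+Rf'_n$. Hence $F=F'+Rf'_n$, and the lemma amounts to controlling how much $\partial_{\p}$ can drop when the single generator $f'_n$ is suppressed. Since $\p\in\Supp_R(N)$ and $F$ is finitely generated, Remark~\ref{remark:infty} gives $\partial_{\p}(F)<\infty$, so I set $s:=\partial_{\p}(F)$ and assume $s\geqslant 1$ (the case $s=0$ is trivial). By definition, there is some $h=Df^*\in F^{\oplus s}$ with $h_{\p}$ surjective, where $D$ is an $s\times n$ matrix with entries in $R$.

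Writing $D=(D'\mid d)$ with $d$ its last column, we have $h=D'f'+d\,f'_n$. A useful reduction is the following: to show $\partial_{\p}(F')\geqslant s-1$, it suffices to exhibit \emph{any} $(s-1)\times(n-1)$ matrix $E$ with entries in $R_{\p}$ such that $Ef'_{\p}$ is surjective. Indeed, multiplying $E$ by a common denominator $s_0\in R\setminus\p$ of its entries gives a matrix over $R$ that represents an element of $(F')^{\oplus(s-1)}$ whose localization at $\p$ equals $s_0\cdot Ef'_{\p}$, still surjective since $s_0$ is a unit in $R_{\p}$. This reduction allows me to reason entirely inside the local ring $R_{\p}$.

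I would then split on $d$ modulo $\p$. Case~1: some entry $d_i$ is a unit in $R_{\p}$. Elementary row operations over $R_{\p}$ (scale row $i$ by $d_i^{-1}$, clear the remaining entries of the last column, then swap rows $i$ and $s$) produce $U\in\GL(s,R_{\p})$ with $UD_{\p}=(\tilde D'\mid e_s)$ for $e_s=(0,\ldots,0,1)^{\top}$. Because $U$ is invertible, $UD_{\p}f^*_{\p}$ is still surjective. The first $s-1$ rows of $UD_{\p}$ have vanishing last entry, so their action on $f^*_{\p}$ reads $\tilde D''f'_{\p}$ for some $(s-1)\times(n-1)$ submatrix $\tilde D''$. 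For any $y\in N_{\p}^{\oplus(s-1)}$, surjectivity of $UD_{\p}f^*_{\p}$ onto $N_{\p}^{\oplus s}$ yields $m\in M_{\p}$ with image $(y,0)^{\top}$, whence $\tilde D''f'_{\p}(m)=y$. Thus $\tilde D''f'_{\p}$ is surjective, and the denominator-clearing step yields $\partial_{\p}(F')\geqslant s-1$.

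Case~2: every entry of $d$ lies in $\p$. Then the contribution $\overline{d\,f'_n}$ vanishes mod $\p$, so $\overline h=\overline{D'f'}$, which is surjective since $h_{\p}$ is. As $\coker(D'f')_{\p}$ is a finitely generated $R_{\p}$-module killed upon reduction mod $\p R_{\p}$, Nakayama's Lemma forces it to be zero; thus $D'f'\in(F')^{\oplus s}$ is already surjective at $\p$, giving $\partial_{\p}(F')\geqslant s\geqslant s-1$. The main subtlety in this outline is the denominator-clearing reduction: it is what bridges local row-reduction in $R_{\p}$ with the $R$-module definition of $\partial_{\p}(F')$, and once it is in place, the remainder of the argument is straightforward block-matrix bookkeeping together with one appeal to Nakayama.
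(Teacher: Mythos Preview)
Your proof is correct and follows essentially the same approach as the paper's. The paper condenses your two cases into a single step by choosing $C\in\GL(d,R_{\p})$ so that $CBA^{-1}$ (your $UD_{\p}$, after clearing denominators) has its first $d-1$ entries in the last column lying in $\p$; your case split simply makes this row-reduction explicit, and both arguments finish via Nakayama on the top-left $(d-1)\times(n-1)$ block.
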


\begin{proof}
If $\partial_{\p}(F)\leqslant 1$, then $\partial_{\p}(F')\geqslant 0\geqslant\partial_{\p}(F)-1$, and so we are done.  Suppose then, for the rest of the proof, that $\partial_{\p}(F)\geqslant 2$.  

Let $d:=\partial_{\p}(F)$, and let $B$ be a $d\times n$ matrix with entries in $R$ such that $(Bf)_{\p}$ is surjective.  Let $C\in \GL(d,R_{\p})$ such that $CBA^{-1}$ can be represented as a matrix $(b_{i,j})$ with entries in $R$, where $b_{1,n}, \ldots, b_{d-1,n}\in\p$.  Hence
\[
\overline{CBA^{-1}}
=\begin{pmatrix}
\overline{b_{1,1}} & \cdots & \overline{b_{1,n-1}} & \overline{0} \\
\vdots & \ddots & \vdots & \vdots \\
\overline{b_{d-1,1}} & \cdots & \overline{b_{d-1,n-1}} & \overline{0} \\
\overline{b_{d,1}} & \cdots & \overline{b_{d,n-1}} & \overline{b_{d,n}} \\
\end{pmatrix}.
\]
Let
\[
B'
:=\begin{pmatrix}
b_{1,1} & \cdots & b_{1,n-1} \\
\vdots & \ddots & \vdots \\
b_{d-1,1} & \cdots & b_{d-1,n-1} \\
\end{pmatrix}.
\]
Then $B'f'\in(F')^{\oplus (d-1)}$, and $\overline{B'f'}$ is surjective.  Nakayama's Lemma then tells us that $(B'f')_{\p}$ is surjective.  Hence $\partial_{\p}(F')\geqslant d-1= \partial_{\p}(F)-1$.
\end{proof}

\begin{lemma}\label{lemma:sur-reduction}
Assume the hypotheses of the Surjective Lemma, and define $\mathit{\Lambda}$ as in Lemma~\ref{lemma:sur-Lambda} with respect to $F:=Rf_1+\cdots+Rf_n$ and $X$.  Let $A\in \textnormal{\textbf{GL}}(n,R)$, and suppose that, with respect to Definition~\ref{definition:sur-F}, we have that $f'$ is $(t,X,\mathit{\Lambda})$-surjective.  Then $f'$ is $(t,X,X)$-surjective.
\end{lemma}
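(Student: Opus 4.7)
The plan is to verify that $f'$ is $(t, X, \p)$-surjective for every $\p \in X$. For $\p \in \mathit{\Lambda}$ this is exactly the standing hypothesis, so the work reduces to handling $\p \in X - \mathit{\Lambda}$. For such a $\p$, I would invoke Lemma~\ref{lemma:sur-Lambda} to produce $\q \in \mathit{\Lambda}$ with $\q \subsetneq \p$ and $\partial_{\q}(F) = \partial_{\p}(F)$. Since $\q, \p \in X$ with $\q \subsetneq \p$, every chain $\p \subsetneq \p_1 \subsetneq \cdots \subsetneq \p_r$ inside $\Var(\p) \cap X$ prepends with $\q$ to give a chain of length $r + 1$ inside $\Var(\q) \cap X$, yielding the strict dimension inequality $\dim_X(\q) \geq \dim_X(\p) + 1$.

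From here I would split on the size of $\partial_{\p}(F)$. If $\partial_{\p}(F) \geq n$, then Lemma~\ref{lemma:sur-del-one} directly gives $\partial_{\p}(F') \geq n - 1 \geq \min\{n - 1, t + \dim_X(\p)\}$, and $\p$-surjectivity for $f'$ is immediate. In the remaining case $\partial_{\p}(F) < n$, the $(t, X, X)$-surjectivity of $f$ at $\q$ supplies $\partial_{\q}(F) \geq \min\{n, t + \dim_X(\q)\}$; since $\partial_{\q}(F) = \partial_{\p}(F) < n$, the minimum must be $t + \dim_X(\q)$, and the dimension gap from the previous paragraph upgrades this to $\partial_{\p}(F) = \partial_{\q}(F) \geq t + \dim_X(\p) + 1$. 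Lemma~\ref{lemma:sur-del-one} then delivers $\partial_{\p}(F') \geq t + \dim_X(\p)$, and the inequalities $t + \dim_X(\p) + 1 \leq \partial_{\p}(F) < n$ force $t + \dim_X(\p) \leq n - 2$, so $\min\{n - 1, t + \dim_X(\p)\} = t + \dim_X(\p)$, finishing this case as well.

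The main obstacle is really the maneuver in the second case: the defining equality $\partial_{\q}(F) = \partial_{\p}(F)$ coming from the construction of $\mathit{\Lambda}$ has to conspire with the strict gap $\dim_X(\q) > \dim_X(\p)$ to upgrade the bound $\partial_{\p}(F) \geq t + \dim_X(\p)$ by exactly one unit, which is precisely the slack consumed by Lemma~\ref{lemma:sur-del-one} when passing from $F$ to $F'$. This one-unit upgrade is where the reduction to the finite set $\mathit{\Lambda}$ earns its keep; beyond it, the verification is a matter of straightforward bookkeeping around the two lemmas already in hand.
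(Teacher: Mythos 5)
Your proposal is correct and matches the paper's proof in all essentials: both invoke Lemma~\ref{lemma:sur-Lambda} to produce $\q\in\mathit{\Lambda}$ with $\q\subsetneq\p$ and $\partial_{\q}(F)=\partial_{\p}(F)$, both use Lemma~\ref{lemma:sur-del-one} to control the drop from $F$ to $F'$, and both exploit the dimension gap $\dim_X(\q)\geqslant\dim_X(\p)+1$. The only difference is presentational — you split into cases according to whether $\partial_{\p}(F)\geqslant n$, whereas the paper folds that case analysis into a single chain of inequalities ending with $\min\{n,t+\dim_X(\q)\}-1\geqslant\min\{n-1,t+\dim_X(\p)\}$.
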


\begin{proof}
Since $t$ and $X$ are understood, we can use the terms \textit{$\p$-surjective} and \textit{$Y$-surjective} for any $\p\in X$ and for any $Y\subseteq X$ without the risk of confusion.  Hence, we aim to prove that $f'$ is $X$-surjective.

Let $\p\in X-\mathit{\Lambda}$.  By Lemma~\ref{lemma:sur-del-one}, we have $\partial_{\p}(F')\geqslant \partial_{\p}(F)-1$.  By Lemma~\ref{lemma:sur-Lambda}, there exists $\q\in \mathit{\Lambda}$ such that $\q\subsetneq \p$ and $\partial_{\q}(F)= \partial_{\p}(F)$.  Since $f$ is $\q$-surjective by assumption, we now have
\[
\begin{array}{rcl}
\partial_{\p}(F') & \geqslant & \partial_{\p}(F)-1 \\
& = & \partial_{\q}(F)-1 \\
& \geqslant & \min\{n,t+\dim_X(\q)\}-1 \\
& \geqslant & \min\{n-1,t+\dim_X(\p)\}, \\
\end{array}
\]
and so $f'$ is $\p$-surjective.  Thus $f'$ is $(X-\mathit{\Lambda})$-surjective.  Since $f'$ is $\mathit{\Lambda}$-surjective by assumption, we conclude that $f'$ is $X$-surjective.
\end{proof}

Assume the hypotheses of the Surjective Lemma.  By Lemma~\ref{lemma:sur-reduction}, proving the Surjective Lemma now reduces to finding a matrix $V\in \GL(n,R)$ such that the first $n-1$ components of $Vf:=(g_1,\ldots,g_n)^{\top}$ form a map $(g_1,\ldots,g_{n-1})^{\top}$ that is $(t,X,\mathit{\Lambda})$-surjective, where $\mathit{\Lambda}$ is defined as in Lemma~\ref{lemma:sur-Lambda} with respect to $F:=Rf_1+\cdots+Rf_n$ and $X$.  We compute such a matrix $V$ over the course of the next two sections, completing a proof of the Surjective Lemma in Section~\ref{sec:sur-lemma}.

\section{The Maximal Ideals of \texorpdfstring{$R$}{R} in \texorpdfstring{$\mathit{\Lambda}$}{Lambda}}\label{sec:Max}

Assume the hypotheses of the Surjective Lemma, and define $\mathit{\Lambda}$ as in Lemma~\ref{lemma:sur-Lambda} with respect to $F:=Rf_1+\cdots+Rf_n$ and $X$.  In this section, we find a matrix $V\in\GL(n,R)$ such that the first $n-1$ components of $Vf:=(g_1,\ldots,g_n)^{\top}$ form a map $(g_1,\ldots,g_{n-1})^{\top}$ that is $(t,X,\m)$-surjective for every $\m\in\mathit{\Lambda}\cap\Max(R)$.  We accomplish this goal over the course of three lemmas.  The following definition will be useful in the work to come:

\begin{definition}
Let $n\in\ZZ$ with $n\geqslant 2$.  For every $i\in\{1,\ldots,n-1\}$, let $P_i$ be the $n\times n$ permutation matrix obtained by switching the $i$th row and the $n$th row of the $n\times n$ identity matrix, and let $P_n$ denote the $n\times n$ identity matrix itself.
\end{definition}

We begin with the following lemma:

\begin{lemma}\label{lemma:sur-L}
Assume the hypotheses of the Surjective Lemma; define $\mathit{\Lambda}$ as in Lemma~\ref{lemma:sur-Lambda} with respect to $F:=Rf_1+\cdots+Rf_n$ and $X$; and let $\m\in \mathit{\Lambda}\cap\Max(R)$.  Then there exist elements $r_{\m,1},\ldots,r_{\m,n-1}\in R$ and a number $\mathscr{L}_{\m}\in\{1,\ldots,n\}$ with the following property:  For all $s_1,\ldots,s_n\in R-\m$ and for every $n\times n$ matrix $V$ with entries in $R$ such that 
\[
V
\equiv
\begin{pmatrix}
1 & 0 & \cdots & 0 & r_{\m,1} \\
0 & \ddots & \ddots & \vdots & \vdots \\
\vdots & \ddots & \ddots & 0 & \vdots \\
0 & \cdots & 0 & 1 & r_{\m,n-1} \\
0 & \cdots & \cdots & 0 & 1 \\
\end{pmatrix}
P_{\mathscr{L}_{\m}}
\begin{pmatrix}
s_1 & 0 & \cdots & 0 & 0 \\
0 & \ddots & \ddots & \vdots & \vdots \\
\vdots & \ddots & \ddots & 0 & \vdots \\
0 & \cdots & 0 & s_{n-1} & 0 \\
0 & \cdots & \cdots & 0 & s_n \\
\end{pmatrix}
\textnormal{ (mod }\m\textnormal{)},
\]
the first $n-1$ components of $Vf:=(g_1,\ldots,g_n)^{\top}$ form a map $(g_1,\ldots,g_{n-1})^{\top}$ that is $(t,X,\m)$-surjective.
\end{lemma}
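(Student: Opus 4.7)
The plan is to reduce the problem modulo $\m$ and exploit the linear algebra over the residue field $\kappa(\m)$. Since $\m$ is maximal, $\dim_X(\m) = 0$, and the hypothesis that $f$ is $(t,X,X)$-surjective gives $\partial_\m(F) \geq \min\{n,t\} = t$. By Nakayama's Lemma, this provides a $t \times n$ matrix $C$ with entries in $R$ such that, writing $\overline{(-)} := -\otimes_R\kappa(\m)$, the reduced map $\overline{C}\cdot(\overline{f_1},\ldots,\overline{f_n})^\top : \overline{M} \to \overline{N}^{\oplus t}$ is surjective; in particular $\overline{C}$ has rank $t$ as a matrix over $\kappa(\m)$. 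Write $V_\m$ for the $\kappa(\m)$-linear span of $\overline{f_1},\ldots,\overline{f_n}$ inside $\Hom_{S/\m S}(\overline{M},\overline{N})$.

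The construction of $\mathscr{L}_\m$ and the $r_{\m,i}$'s proceeds by case analysis on $\dim_{\kappa(\m)} V_\m$. If $\dim V_\m < n$, I would pick $\mathscr{L}_\m := \ell$ so that $\overline{f_\ell}$ lies in the $\kappa(\m)$-span of the remaining $\overline{f_i}$, and set $r_{\m,i} := 0$ for all $i$; a direct computation then shows that for any admissible $s_j$'s and $V$, the first $n-1$ components of $\overline{Vf}$ span all of $V_\m$, so the surjective tuple given by $\overline{C}$ already persists there. If $\dim V_\m = n$, then $\overline{f_1},\ldots,\overline{f_n}$ form a basis; since $\overline{C}$ has rank $t$ with $n \geq t+1$ columns, some column $\overline{c}_{\bullet,\ell}$ lies in the $\kappa(\m)$-span of the remaining columns, and I would write $\overline{c}_{\bullet,\ell} = \sum_{i\neq\ell}\beta_i\overline{c}_{\bullet,i}$, set $\mathscr{L}_\m := \ell$, and lift each $\beta_i$ to $r_{\m,i} \in R$ (taking $r_{\m,i} := 0$ when $\beta_i = 0$).

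To verify the required property, observe that modulo $\m$ the first $n-1$ components of $Vf$ span, up to rescaling by units, the hyperplane $H(\widetilde{r}) \subseteq V_\m$ generated by the vectors $\overline{f_i} + \widetilde{r_i}\overline{f_\ell}$ for $i \in \{1,\ldots,n\}\setminus\{\ell\}$, where each $\widetilde{r_i} \in \kappa(\m)$ is determined by $r_{\m,i}$ together with the unit ratios $\overline{s_\ell/s_j}$. The task reduces to showing $\partial_\m(H(\widetilde{r})) \geq t$ for every admissible $\widetilde{r}$: each coordinate is pinned at $0$ when $\beta_i = 0$ or ranges over the orbit $\beta_i\kappa(\m)^{\times}$ when $\beta_i \neq 0$. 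A natural candidate is the tuple $\overline{C}_{\setminus\ell}\cdot(\overline{f_i}+\widetilde{r_i}\overline{f_\ell})_{i\neq\ell} \in H(\widetilde{r})^{\oplus t}$, which differs from the original surjective $\overline{C}\overline{f}$ by the shift $\overline{C}_{\setminus\ell}(\widetilde{r}-\beta)\otimes\overline{f_\ell}$; Nakayama's Lemma then upgrades $\kappa(\m)$-surjectivity of this tuple back to $(t,X,\m)$-surjectivity of the first $n-1$ components of $Vf$.

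The principal obstacle is precisely this last verification in the dimension-$n$ case: ensuring that every admissible $\widetilde{r}$ (not just the canonical value $\widetilde{r}=\beta$) yields a hyperplane $H(\widetilde{r})$ that supports a surjective $t$-tuple. The non-surjective locus in $V_\m^{\oplus t}$ is cut out by bounded-complexity rank-drop conditions, and, combining the rank-$t$ structure of $\overline{C}_{\setminus\ell}$ with the explicit combinatorial parametrization of admissible perturbations as a product of orbits, one verifies that every such $H(\widetilde{r})$ avoids this locus, possibly by adjusting the candidate tuple via invertible row operations. This linear-algebra check is the technical heart of the lemma.
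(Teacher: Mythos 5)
Your proposal adopts the paper's broad strategy (reduce to a linear-algebra problem over $\kappa(\m)$, argue by cases, then invoke Nakayama), but your case distinction and your choice of the $r_{\m,i}$'s are genuinely different from the paper's, and the gap you acknowledge at the end is a real one.

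Your dimension-$< n$ case is fine: taking $\mathscr{L}_\m = \ell$ with $\overline{f_\ell}$ dependent on the rest and $r_{\m,i} = 0$ makes the first $n-1$ components span all of $V_\m$ regardless of the $s_j$'s, so the surjective $t$-tuple persists and Nakayama finishes.

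The dimension-$n$ case, however, does not work as stated, and your closing paragraph correctly flags the problem but does not resolve it. Your choice is to take $\overline{c}_{\bullet,\ell} = \sum_{i\neq\ell}\beta_i\overline{c}_{\bullet,i}$ and lift $\beta_i$ to $r_{\m,i}$. This guarantees that when the effective perturbation $\widetilde{r}$ equals $\beta$ --- the value obtained when $s_1 = \cdots = s_n$ --- the first $n-1$ components recover $\overline{C}\overline{f}$. But the lemma demands the same $r_{\m,i}$'s work for \emph{all} $s_j \in R-\m$, and the effective perturbation $\widetilde{r}_i$ ranges over the entire coset $\beta_i\kappa(\m)^\times$ as the unit ratios $\overline{s_{\mathscr{L}_\m}/s_j}$ vary. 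It is not true that $H(\widetilde{r})$ always contains a surjective $t$-tuple. For instance, take $\kappa(\m) = \mathbb{F}_5$, $\overline{N} = \kappa(\m)^2$, $n = 2$, $t = 1$, $\overline{f_1} = \mathrm{diag}(1,0)$, $\overline{f_2} = \mathrm{diag}(2,1)$, and $\overline{C} = (1,1)$, so $\overline{C}\overline{f} = \mathrm{diag}(3,1)$ is surjective and your recipe gives $\ell = 2$, $\beta_1 = 1$. Then for the unit ratio $\lambda = 2$ the perturbed map $\overline{f_1} + 2\overline{f_2} = \mathrm{diag}(0,2)$ fails to be surjective, so your chosen $r_{\m,1}$ (lifting $1$) does not satisfy the lemma's conclusion. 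A different $\overline{C}$ (here $(0,1)$) would repair the example, but your proposal offers no criterion for selecting a robust $\overline{C}$ and $\ell$; that selection is precisely the content your ``technical heart'' is missing. Note also that the non-surjective locus need not be avoidable at all from a badly chosen $\overline{C}$, so ``adjusting by invertible row operations'' cannot be made to work uniformly in $\widetilde{r}$ without a more structured choice.

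The paper avoids this pitfall by a different mechanism: it first reduces to the case $s_1 = \cdots = s_n = 1$ via the observation that $\partial_\m(Rs_1f_1 + \cdots + Rs_nf_n) = \partial_\m(F)$, then works with a reduced row echelon form of $\overline{CB}$, distinguishing cases by whether $f'$ is already $\m$-surjective and by the location $j_d$ of the last pivot, rather than by $\dim V_\m$. The pivot data supply the structure needed to select $\mathscr{L}_\m$ and the $r_{\m,j}$'s compatibly; your column-dependency relation in $\overline{C}$ does not.
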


\begin{proof}
For all $s_1,\ldots,s_n\in R-\m$, if $G:=Rs_1f_1+\cdots+Rs_nf_n$, then $\partial_{\m}(G)=\partial_{\m}(F)$. Hence, by Nakayama's Lemma, it suffices to prove the case in which $s_1=\cdots=s_n=1$.

Define every object in Definition~\ref{definition:sur-F} with respect to our current hypotheses, with $\m$ taking the place of $\p$ and with $A$ defined as the $n\times n$ identity matrix.  Then we have $f':=(f_1,\ldots,f_{n-1})^{\top}$.  Let $d:=\partial_{\m}(F)$, and let $B$ be a $d\times n$ matrix with entries in $R$ such that $(Bf)_{\m}$ is surjective.  Let $C\in\GL(d,R_{\m})$ such that $CB$ can be represented by a matrix $(b_{i,j})$ with entries in $R$ and such that $\overline{CB}$ is in the following reduced row echelon form, where the nonzero entries are clustered toward the top right corner of the matrix:
\[
\overline{CB}
=\begin{pmatrix}
\overline{0} & \cdots & \overline{0} & \overline{1} & \vdots & \overline{0} & \vdots & \overline{0} & \vdots \\
\overline{0} & \cdots & \overline{0} & \overline{0} & \vdots & \overline{1} & \vdots & \overline{0} & \vdots \\ 
\hdotsfor{9} \\
\overline{0} & \cdots & \overline{0} & \overline{0} & \vdots & \overline{0} & \vdots & \overline{1} & \vdots \\
\end{pmatrix}.
\]
Here, the vertical and horizontal ellipses denote possible omissions of entries, and the zero columns on the left may not be present.  For every $i\in\{1,\ldots,d\}$, let $j_i$ be the smallest number in the set $\{1,\ldots,n\}$ such that $\overline{b_{i,j_i}}\neq \overline{0}$.  We assume that, for every $i\in\{1,\ldots,d\}$, the entry $\overline{b_{i,j_i}}$ is the only nonzero entry in the $(j_i)$th column of $\overline{CB}$.

Below, we consider several cases and prove the lemma in each case.  Since $t$ and $X$ are understood, we may use the term \textit{$\m$-surjective} for the rest of the proof without the risk of ambiguity.

First suppose that $f'$ is $\m$-surjective.  Then, by Nakayama's Lemma, we may let $\mathscr{L}_{\m}=n$ and $r_{\m,1}=\cdots=r_{\m,n-1}=0$.

Suppose next that $j_d\leqslant n-1$.  Then, by Nakayama's Lemma, we may take $\mathscr{L}_{\m}=n$, and we may define $r_{\m,j}$ for every $j\in\{1,\ldots,n-1\}$ as follows: If $j=j_i$ for some $i\in\{1,\ldots,d\}$, then let $r_{\m,j}=b_{i,n}$; otherwise, let $r_{\m,j}=0$.

Now suppose, for the rest of the proof, that $f'$ is not $\m$-surjective and that $j_d=n$.  If $\partial_{\m}(F)=n$, then $\partial_{\m}(F')\geqslant \partial_{\m}(F)-1 = n-1$ by Lemma~\ref{lemma:sur-del-one}, and so $f'$ is $\m$-surjective, a contradiction.  Hence $\partial_{\m}(F)\leqslant n-1$.

Since $j_d=n$ and since $d=\partial_{\m}(F)\leqslant n-1$, there exists $k\in \{1,\ldots,n-1\}-\{j_1,\ldots,j_{d-1}\}$.  Accordingly, by Nakayama's Lemma, we may let $\mathscr{L}_{\m}$ be any such $k$, and we may define $r_{\m,j}$ for every $j\in\{1,\ldots,n-1\}$ as follows:  If $j=j_i$ for some $i\in\{1,\ldots,d-1\}$, then let $r_{\m,j}=b_{i,k}$; otherwise, let $r_{\m,j}=0$.
\end{proof}

We use the next lemma to find a matrix $Q\in\GL(n,R)$ and elements 
\[
s_1,\ldots,s_n\in R-\bigcup_{\m\in\mathit{\Lambda}\cap\Max(R)} \m
\]
such that
\[
Q
\equiv
P_{\mathscr{L}_{\m}}
\begin{pmatrix}
s_1 & 0 & \cdots & 0 & 0 \\
0 & \ddots & \ddots & \vdots & \vdots \\
\vdots & \ddots & \ddots & 0 & \vdots \\
0 & \cdots & 0 & s_{n-1} & 0 \\
0 & \cdots & \cdots & 0 & s_n \\
\end{pmatrix}
\textnormal{ (mod }\m\textnormal{)}
\]
for every $\m\in\mathit{\Lambda}\cap\Max(R)$.  We phrase the next lemma using language more general than this because we will use the lemma again under different circumstances in Sections~\ref{sec:sur-lemma} and~\ref{sec:spl} of this paper and in a separate paper on cancellation properties of modules~\cite{Bai2}.

\begin{lemma}\label{lemma:sur-Q}
Let $R$ be a commutative ring, and let $n\in\ZZ$ with $n\geqslant 2$.  Let $\mathit{\Lambda}_1,\ldots,\mathit{\Lambda}_n$ be finite, pairwise disjoint subsets of $\Max(R)$.  (Here, we allow some, or even all, of these sets to be empty.)  Then there exist a matrix $Q\in\textnormal{\textbf{GL}}(n,R)$ and elements
\[
s_1,\ldots,s_n\in R-\bigcup_{\m\in\mathit{\Lambda}_1\cup\cdots\cup\mathit{\Lambda}_n} \m
\]
such that, for every $i\in\{1,\ldots,n\}$ and for every $\m\in \mathit{\Lambda}_i$, the matrix $Q$ satisfies the congruence
\[
Q
\equiv
P_i
\begin{pmatrix}
s_1 & 0 & \cdots & 0 & 0 \\
0 & \ddots & \ddots & \vdots & \vdots \\
\vdots & \ddots & \ddots & 0 & \vdots \\
0 & \cdots & 0 & s_{n-1} & 0 \\
0 & \cdots & \cdots & 0 & s_n \\
\end{pmatrix}
\textnormal{ (mod }\m\textnormal{)}.
\]
Moreover, for any
\[
a\in R-\bigcup_{\m\in \mathit{\Lambda}_1} \m,
\]
we can arrange for the first row of $Q$ to be of the form
\[
\begin{pmatrix}
1-ab & 0 & \cdots & 0 & ab \\
\end{pmatrix}
\]
for some $b\in R$.
\end{lemma}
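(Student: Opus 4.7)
The plan is to construct $Q$ explicitly as a product of shear matrices, each of determinant $1$, so that $Q\in\GL(n,R)$ is automatic, and to tune the factors via the Chinese Remainder Theorem (CRT) applied to the finitely many maximal ideals in $\mathit{\Lambda}_1\cup\cdots\cup\mathit{\Lambda}_n$.  Writing $E_{i,j}$ for the $n\times n$ matrix with a $1$ in position $(i,j)$ and zeros elsewhere, each elementary matrix $I\pm rE_{i,j}$ with $i\neq j$ lies in $\GL(n,R)$ with determinant $1$, so the invertibility question is disposed of once and for all, and we are free to focus on getting the congruences right.

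First I would use CRT to build the needed residues.  Since $a\not\in\m$ for every $\m\in\mathit{\Lambda}_1$, the image of $a$ in the field $R/\m$ is invertible for each such $\m$; hence there is an element $b\in R$ with $ab\equiv 1\pmod{\m}$ for $\m\in\mathit{\Lambda}_1$ and $b\equiv 0\pmod{\m}$ for $\m\in\mathit{\Lambda}_j$ with $j\geqslant 2$.  A further application of CRT produces an element $c\in R$ with $c\equiv -1\pmod{\m}$ for $\m\in\mathit{\Lambda}_1$ and $c\equiv 1\pmod{\m}$ for $\m\in\mathit{\Lambda}_j$ with $j\geqslant 2$, and similarly, if $n\geqslant 3$, yields indicators $e_2,\ldots,e_{n-1}\in R$ with $e_j\equiv 1\pmod{\m}$ for $\m\in\mathit{\Lambda}_j$ and $e_j\equiv 0\pmod{\m}$ for $\m\in\mathit{\Lambda}_i$ with $i\neq j$.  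Finally I take $s_n:=1$ and, for $1\leqslant k\leqslant n-1$, use CRT to pick $s_k\in R$ with $s_k\equiv -1\pmod{\m}$ for $\m\in\mathit{\Lambda}_k$ and $s_k\equiv 1\pmod{\m}$ for the remaining $\m\in\mathit{\Lambda}_1\cup\cdots\cup\mathit{\Lambda}_n$; each such $s_k$ is then $\pm 1$ modulo every $\m$ in the union, so $s_k\not\in\m$ for every such $\m$.

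Next I would set $Q_1:=(I+cE_{n,1})(I+abE_{1,n})(I-E_{n,1})$, and, for each $j$ with $2\leqslant j\leqslant n-1$, set $Q_j:=(I+e_jE_{j,n})(I-e_jE_{n,j})(I+e_jE_{j,n})$; finally define $Q:=Q_2Q_3\cdots Q_{n-1}Q_1$, with $Q=Q_1$ when $n=2$.  Each factor is a shear of determinant $1$, so $\det Q=1$ and $Q\in\GL(n,R)$ automatically.  Direct multiplication shows that $Q_1$ has first row $(1-ab,0,\ldots,0,ab)$, while every $Q_j$ with $j\geqslant 2$ is the identity on row $1$ (since it touches only rows $j$ and $n$); hence the first row of $Q$ is precisely that of $Q_1$, giving the ``moreover'' clause.

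To check the main congruences, a short $2\times 2$ block computation on rows $j$ and $n$ shows that $Q_j$ reduces to $P_j\operatorname{diag}(1,\ldots,-1,\ldots,1)$ (with the $-1$ in position $j$) modulo $\m\in\mathit{\Lambda}_j$ and to $I$ modulo every other $\m\in\mathit{\Lambda}_1\cup\cdots\cup\mathit{\Lambda}_n$; an analogous computation shows that $Q_1$ reduces to $P_1\operatorname{diag}(-1,1,\ldots,1)$ modulo $\m\in\mathit{\Lambda}_1$ and to $I$ modulo each other such $\m$.  Since at most one factor of $Q$ is nontrivial modulo any single $\m\in\mathit{\Lambda}_1\cup\cdots\cup\mathit{\Lambda}_n$, we obtain $Q\equiv P_i\operatorname{diag}(s_1,\ldots,s_n)\pmod{\m}$ for each $\m\in\mathit{\Lambda}_i$.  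The main subtlety I anticipate is the sign bookkeeping, since the three-shear product naturally produces a \emph{signed} swap in place of the pure permutation $P_i$; the CRT recipe for the $s_k$ is designed precisely to absorb these sign flips into a single global diagonal.
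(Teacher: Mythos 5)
Your proof is correct, and it takes a genuinely different route from the paper's. The paper constructs $Q$ directly as a single matrix of a fixed shape (diagonal with a final column and final row filled in) and then runs a fairly delicate induction on comaximal ideals, repeatedly invoking facts like $\alpha K+\beta L=R$ whenever $\alpha+\beta=1$ with $\alpha\in K$, $\beta\in L$, so as to force $\det Q=1$ at the end. You instead express $Q$ as a product of elementary shears, so $\det Q=1$ is automatic, and push all the work onto choosing the shear parameters by the Chinese Remainder Theorem so that each block swap reduces to the signed permutation you need modulo the relevant $\m$, with the sign absorbed into the $s_k$. I verified the computations: the $\{j,n\}$ block of $Q_j$ is $\bigl(\begin{smallmatrix}1-e_j^2 & 2e_j-e_j^3\\ -e_j & 1-e_j^2\end{smallmatrix}\bigr)$, which reduces to $\bigl(\begin{smallmatrix}0&1\\-1&0\end{smallmatrix}\bigr)$ or $I$ according as $e_j\equiv 1$ or $e_j\equiv 0$; the $\{1,n\}$ block of $Q_1$ is $\bigl(\begin{smallmatrix}1-ab & ab\\ c-cab-1 & cab+1\end{smallmatrix}\bigr)$, which reduces to $\bigl(\begin{smallmatrix}0&1\\-1&0\end{smallmatrix}\bigr)$ modulo $\m\in\mathit{\Lambda}_1$ and to $I$ modulo $\m\in\mathit{\Lambda}_j$ for $j\geqslant 2$; your CRT choice of $s_k$ (forcing $s_k\equiv -1$ on $\mathit{\Lambda}_k$ and $\equiv 1$ elsewhere, with $s_n=1$) makes $P_i\operatorname{diag}(s_1,\ldots,s_n)$ match these reductions on the nose, and each $s_k$ is $\pm 1$ modulo every relevant $\m$, hence lies outside the union; and since each $Q_j$ with $j\geqslant 2$ has first row $e_1^{\top}$, left-multiplying $Q_1$ by them preserves the first row $(1-ab,0,\ldots,0,ab)$. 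Your approach buys conceptual simplicity (invertibility is free) and a clean separation of the sign bookkeeping into the choice of the $s_k$; the paper's approach yields a $Q$ in a transparent near-triangular shape, at the cost of the Bezout-style inductive verification that the determinant is a unit. Both establish exactly the stated lemma, including the last clause.
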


\begin{remark}
We do not use the last claim of Lemma~\ref{lemma:sur-Q} in this paper, but we use it in a paper on the cancellation of modules~\cite{Bai2}.
\end{remark}

\begin{proof}[Proof of Lemma~\ref{lemma:sur-Q}]
Let 
\[
a\in R-\bigcup_{\m\in \mathit{\Lambda}_1} \m.
\]
For every $i\in\{1,\ldots,n\}$, let
\[
I_i
:=\bigcap_{\m\in \mathit{\Lambda}_i} \m
\hspace{0.5in}\textnormal{and}\hspace{0.5in}
J_i:=\bigcap_{j\in \{1,\ldots,n\}-\{i\}} \left(\bigcap_{\m\in \mathit{\Lambda}_j} \m\right),
\]
and let
\[
U_i
:=\bigcup_{\m\in \mathit{\Lambda}_i} \m
\hspace{0.5in}\textnormal{and}\hspace{0.5in}
V_i:=\bigcup_{j\in \{1,\ldots,n\}-\{i\}}\left(\bigcup_{\m\in \mathit{\Lambda}_j} \m\right).
\]
We would like to prove that there exist
\[
\begin{array}{lllll}
a_1\in I_1-V_1, & a_2\in I_2-V_2, & \ldots, & a_{n-1}\in I_{n-1}-V_{n-1}, & \\
b_1\in aJ_1-U_1, & b_2\in J_2-U_2, & \ldots, & b_{n-1}\in J_{n-1}-U_{n-1}, & \\
c_1\in J_1-U_1, & c_2\in J_2-U_2, & \ldots, & c_{n-1}\in J_{n-1}-U_{n-1}, & c_n\in J_n-U_n \\
\end{array}
\]
such that $a_1=1-b_1$ and such that the $n\times n$ matrix
\[
Q
:=
\begin{pmatrix}
a_1 & 0 & \cdots & 0 & b_1 \\
0 & \ddots & \ddots & \vdots & \vdots \\
\vdots & \ddots & \ddots & 0 & \vdots \\
0 & \cdots & 0 & a_{n-1} & b_{n-1} \\
c_1 & \cdots & \cdots & c_{n-1} & c_n \\
\end{pmatrix}
\]
has determinant 1 and is thus invertible.  After we have accomplished this goal, we can appeal to the Chinese Remainder Theorem to produce
\[
s_1,\ldots,s_n\in R-\bigcup_{\m\in\mathit{\Lambda}_1\cup\cdots\cup\mathit{\Lambda}_n} \m
\]
such that the following conditions hold:
\begin{enumerate}
\item For every $i\in\{1,\ldots,n-1\}$ and for every $\m\in ( \mathit{\Lambda}_1\cup\cdots\cup \mathit{\Lambda}_n)- \mathit{\Lambda}_i$, the element $s_i$ satisfies the congruence $s_i\equiv a_i$ (mod $\m$).
\item For every $i\in\{1,\ldots,n-1\}$ and for every $\m\in \mathit{\Lambda}_i$, the element $s_n$ satisfies the congruence $s_n\equiv b_i$ (mod $\m$).
\item For every $i\in \{1,\ldots,n\}$ and for every $\m\in \mathit{\Lambda}_i$, the element $s_i$ satisfies the congruence $s_i\equiv c_i$ (mod $\m$).
\end{enumerate}
Finally, since $b_1\in aJ_1$, we can choose $b\in J_1$ such that $b_1=ab$.  Hence $a_1=1-b_1=1-ab$.  The matrix $Q$ and the elements $s_1,\ldots,s_n$ of $R$ will then jointly satisfy all of the conditions described in the lemma.

We note that, for any comaximal ideals $K,L$ of $R$ and for any $\alpha\in K$ and $\beta\in L$ such that $\alpha+\beta=1$, it is the case that $R=\sqrt{\alpha R+\beta R}=\sqrt{\alpha^2R+\beta^2R}\subseteq\sqrt{\alpha K+\beta L}$, and so $\alpha K+\beta L=R$.  We will use this observation shortly.

Next, we prove that $aJ_1+J_2+\cdots+J_n=R$.  Suppose not.  Then there exists $\n\in\Max(R)$ such that $aJ_1+J_2+\cdots+J_n\subseteq\n$, and so $aJ_1,J_2,\ldots,J_n\subseteq\n$.  Since $aJ_1\subseteq\n$, we see that $a\in\n$ or $J_1\subseteq\n$.  Either way, $\n\not\in\mathit{\Lambda}_1$.  On the other hand, $J_2,\ldots,J_n\subseteq\n$, and so $\n\in\mathit{\Lambda}_1$, a contradiction.  Hence $aJ_1+J_2+\cdots+J_n=R$, and so $J_1+J_2+\cdots+J_n=R$ as well.

Let $i\in\{1,\ldots,n-1\}$, and suppose that we have defined
\[
\begin{array}{llll}
a_1\in I_1-V_1, & a_2\in I_2-V_2, & \ldots, & a_{i-1}\in I_{i-1}-V_{i-1}, \\
b_1\in aJ_1-U_1, & b_2\in J_2-U_2, & \ldots, & b_{i-1}\in J_{i-1}-U_{i-1} \\
\end{array}
\]
and the ideals
\[
\begin{array}{rcl}
K_{i-1} & := &(b_1a_2\cdots a_{i-2}J_1)+\cdots+(a_1\cdots a_{i-3}b_{i-2}J_{i-2}) \\
& & +(a_1\cdots a_{i-2}J_i)+\cdots+(a_1\cdots a_{i-2}J_n) \\
\end{array}
\]
and
\[
L_{i-1}
:=a_1\cdots a_{i-2}J_{i-1}
\]
of $R$ so that $a_{i-1}K_{i-1}+b_{i-1}L_{i-1}=R$.  Let
\[
\begin{array}{rcl}
K_i & := &(b_1a_2\cdots a_{i-1}J_1)+\cdots+(a_1\cdots a_{i-2}b_{i-1}J_{i-1}) \\
& & +(a_1\cdots a_{i-1}J_{i+1})+\cdots+(a_1\cdots a_{i-1}J_n) \\
\end{array}
\]
and
\[
L_i
:=a_1\cdots a_{i-1}J_i
\]
so that $K_i+L_i=a_{i-1}K_{i-1}+b_{i-1}L_{i-1}=R$.  If $i=1$, then let $a_1\in K_1$ and $b_1\in aJ_1\subseteq L_1$ with $a_1+b_1=1$ so that $R=a_1K_1+b_1aJ_1\subseteq a_1K_1+b_1L_1$ and, hence, so that $a_1K_1+b_1L_1=R$.  If $i\geqslant 2$, then simply let $a_i\in K_i$ and $b_i\in L_i$ with $a_i+b_i=1$ so that $a_iK_i+b_iL_i=R$.

We will prove that $a_i\in I_i-V_i$ and that $b_i\in J_i-U_i$.  Certainly $a_i\in K_i\subseteq I_i$, and $b_i\in L_i\subseteq J_i$.  Suppose that $a_i\in V_i$.  Then there is $\n\in (\mathit{\Lambda}_1\cup\cdots\cup\mathit{\Lambda}_n)-\mathit{\Lambda}_i$ such that $a_i\in\n$.  Since $b_i\in J_i\subseteq \n$, we have $1=a_i+b_i\in\n$, a contradiction.  Hence $a_i\in I_i-V_i$.  Similarly, $b_i\in J_i-U_i$.

By induction on $i$, we can thus define 
\[
\begin{array}{llll}
a_1\in I_1-V_1, & a_2\in I_2-V_2, & \ldots, & a_{n-1}\in I_{n-1}-V_{n-1}, \\
b_1\in aJ_1-U_1, & b_2\in J_2-U_2, & \ldots, & b_{n-1}\in J_{n-1}-U_{n-1} \\
\end{array}
\]
and ideals
\[
K_{n-1} := (b_1a_2\cdots a_{n-2}J_1)+\cdots+(a_1\cdots a_{n-3}b_{n-2}J_{n-2})+(a_1\cdots a_{n-2}J_n)
\]
and
\[
L_{n-1}
:=a_1\cdots a_{n-2}J_{n-1}
\]
of $R$ so that $a_{n-1}K_{n-1}+b_{n-1}L_{n-1}=R$.  Hence
\[
(b_1a_2\cdots a_{n-1}J_1)
+\cdots
+(a_1\cdots a_{n-2}b_{n-1}J_{n-1})
+(a_1\cdots a_{n-1}J_n)
=R.
\]
Accordingly, we can choose
\[
\begin{array}{llll}
c_1\in J_1, & \ldots, & c_{n-1}\in J_{n-1}, & c_n\in J_n \\
\end{array}
\]
such that the determinant
\[
-(b_1a_2\cdots a_{n-1}c_1)
-\cdots
-(a_1\cdots a_{n-2}b_{n-1}c_{n-1})
+(a_1\cdots a_{n-1}c_n)
\]
of the matrix
\[
Q
:=\begin{pmatrix}
a_1 & 0 & \cdots & 0 & b_1 \\
0 & \ddots & \ddots & \vdots & \vdots \\
\vdots & \ddots & \ddots & 0 & \vdots \\
0 & \cdots & 0 & a_{n-1} & b_{n-1} \\
c_1 & \cdots & \cdots & c_{n-1} & c_n \\
\end{pmatrix}
\]
is equal to 1.

It remains to show that $c_i\not\in U_i$ for every $i\in \{1,\ldots,n\}$.  Suppose that $c_1\in U_1$.  Then there is $\n\in\mathit{\Lambda}_1$ such that $c_1\in\n$.  Since $J_2,\ldots,J_n\subseteq\n$, we have $c_2,\ldots,c_n\in\n$, and so $1=\det(Q)\in Rc_1+\cdots+Rc_n\subseteq\n$, a contradiction.  Hence $c_1\not\in U_1$.  Similarly, $c_i\not\in U_i$ for every $i\in \{2,\ldots,n\}$.
\end{proof}

We combine the results of the last two lemmas to achieve the goal of this section:

\begin{lemma}\label{lemma:sur-Max}
Assume the hypotheses of the Surjective Lemma, and define $\mathit{\Lambda}$ as in Lemma~\ref{lemma:sur-Lambda} with respect to $F:=Rf_1+\cdots+Rf_n$ and $X$.  Then there exists a matrix $V\in \textnormal{\textbf{GL}}(n,R)$ such that the first $n-1$ components of $Vf:=(g_1,\ldots,g_n)^{\top}$ form a map $(g_1,\ldots,g_{n-1})^{\top}$ that is $(t,X,\m)$-surjective for every $\m\in \mathit{\Lambda}\cap\Max(R)$.
\end{lemma}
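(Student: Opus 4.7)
The plan is to stitch together Lemma~\ref{lemma:sur-L} and Lemma~\ref{lemma:sur-Q} using the Chinese Remainder Theorem, producing a single matrix $V\in\GL(n,R)$ whose residue modulo each $\m\in\mathit{\Lambda}\cap\Max(R)$ matches the shape prescribed by Lemma~\ref{lemma:sur-L}. For each $\m\in\mathit{\Lambda}\cap\Max(R)$, Lemma~\ref{lemma:sur-L} supplies elements $r_{\m,1},\ldots,r_{\m,n-1}\in R$ and an index $\mathscr{L}_\m\in\{1,\ldots,n\}$ specifying a residue class modulo $\m$ to which $V$ must belong (up to right-multiplication by a diagonal matrix whose diagonal entries lie outside $\m$) in order to guarantee that the first $n-1$ components of $Vf$ are $(t,X,\m)$-surjective.

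First, partition $\mathit{\Lambda}\cap\Max(R)$ as a disjoint union $\mathit{\Lambda}_1\sqcup\cdots\sqcup\mathit{\Lambda}_n$ by placing $\m\in\mathit{\Lambda}_i$ exactly when $\mathscr{L}_\m=i$. Apply Lemma~\ref{lemma:sur-Q} to the family $\mathit{\Lambda}_1,\ldots,\mathit{\Lambda}_n$ (finite and pairwise disjoint in $\Max(R)$) to obtain a matrix $Q\in\GL(n,R)$ together with elements $s_1,\ldots,s_n\in R$ lying outside every $\m\in\mathit{\Lambda}\cap\Max(R)$, such that for every $\m\in\mathit{\Lambda}_i$ we have
\[
Q\equiv P_i\begin{pmatrix} s_1 & & & \\ & \ddots & & \\ & & s_{n-1} & \\ & & & s_n \end{pmatrix}\pmod{\m}.
\]
Next, since $\mathit{\Lambda}\cap\Max(R)$ is finite and its members are pairwise comaximal, the Chinese Remainder Theorem produces, for each $j\in\{1,\ldots,n-1\}$, an element $r_j\in R$ with $r_j\equiv r_{\m,j}\pmod{\m}$ for every $\m\in\mathit{\Lambda}\cap\Max(R)$. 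Let $U$ denote the $n\times n$ upper triangular matrix with $1$'s on the diagonal, last column $(r_1,\ldots,r_{n-1},1)^{\top}$, and zeros elsewhere; since $\det(U)=1$, we have $U\in\GL(n,R)$, and consequently $V:=UQ\in\GL(n,R)$.

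Finally, for each $\m\in\mathit{\Lambda}\cap\Max(R)$, say with $\mathscr{L}_\m=i$, the congruences $U\equiv U_\m\pmod{\m}$ (where $U_\m$ is the upper triangular matrix appearing in Lemma~\ref{lemma:sur-L}) and $Q\equiv P_i\cdot\operatorname{diag}(s_1,\ldots,s_n)\pmod{\m}$ multiply to give
\[
V\equiv U_\m\, P_{\mathscr{L}_\m}\begin{pmatrix} s_1 & & & \\ & \ddots & & \\ & & s_{n-1} & \\ & & & s_n \end{pmatrix}\pmod{\m},
\]
which is exactly the hypothesis of Lemma~\ref{lemma:sur-L}. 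That lemma then guarantees that the first $n-1$ components of $Vf$ form a $(t,X,\m)$-surjective map, as required.

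The main obstacle is organizational rather than technical: a single matrix $V$ must reduce, modulo each $\m\in\mathit{\Lambda}\cap\Max(R)$, to the prescribed product in which the permutation factor $P_{\mathscr{L}_\m}$ varies with $\m$. Partitioning $\mathit{\Lambda}\cap\Max(R)$ according to $\mathscr{L}_\m$ is precisely the move that allows Lemma~\ref{lemma:sur-Q} to absorb this permutation, after which ordinary CRT handles the triangular correction.
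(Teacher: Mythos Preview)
Your proof is correct and follows essentially the same approach as the paper: partition $\mathit{\Lambda}\cap\Max(R)$ by the value of $\mathscr{L}_{\m}$, apply Lemma~\ref{lemma:sur-Q} to obtain $Q$ and the $s_j$, use the Chinese Remainder Theorem to produce the $r_j$, and set $V:=UQ$. The argument and its organization match the paper's proof almost step for step.
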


\begin{proof}
For every $\m\in\mathit{\Lambda}\cap\Max(R)$, choose $r_{\m,1},\ldots,r_{\m,n-1}\in R$ and $\mathscr{L}_{\m}\in\{1,\ldots,n\}$ so that they jointly satisfy the conclusion of Lemma~\ref{lemma:sur-L}.  For every $i\in\{1,\ldots,n\}$, let
\[
\mathit{\Lambda}_i
=\{\m\in \mathit{\Lambda}\cap \Max(R):\mathscr{L}_{\m}=i\}.
\]
Then $\mathit{\Lambda}_1,\ldots,\mathit{\Lambda}_n$ are finite, pairwise disjoint subsets of $\Max(R)$.  Hence, by Lemma~\ref{lemma:sur-Q}, there exist a matrix $Q\in \GL(n,R)$ and elements
\[
s_1,\ldots,s_n\in R-\bigcup_{\m\in\mathit{\Lambda}_1\cup\cdots\cup\mathit{\Lambda}_n} \m
\]
such that, for every $i\in\{1,\ldots,n\}$ and for every $\m\in\mathit{\Lambda}_i$, we have
\[
Q
\equiv
P_i
\begin{pmatrix}
s_1 & 0 & \cdots & 0 & 0 \\
0 & \ddots & \ddots & \vdots & \vdots \\
\vdots & \ddots & \ddots & 0 & \vdots \\
0 & \cdots & 0 & s_{n-1} & 0 \\
0 & \cdots & \cdots & 0 & s_n \\
\end{pmatrix}
\textnormal{ (mod }\m\textnormal{)}.
\]
Next, we use the Chinese Remainder Theorem to find $r_1,\ldots,r_{n-1}\in R$ such that $r_i\equiv r_{\m,i}$ (mod $\m$) for every $i\in\{1,\ldots,n-1\}$ and for every $\m\in\mathit{\Lambda}\cap\Max(R)$, and we define
\[
U
:=\begin{pmatrix}
1 & 0 & \cdots & 0 & r_1 \\
0 & \ddots & \ddots & \vdots & \vdots \\
\vdots & \ddots & \ddots & 0 & \vdots \\
0 & \cdots & 0 & 1 & r_{n-1} \\
0 & \cdots & \cdots & 0 & 1 \\
\end{pmatrix}
\in \GL(n,R).
\]
Let $V:=UQ$.  Then 
\[
V
\equiv
\begin{pmatrix}
1 & 0 & \cdots & 0 & r_{\m,1} \\
0 & \ddots & \ddots & \vdots & \vdots \\
\vdots & \ddots & \ddots & 0 & \vdots \\
0 & \cdots & 0 & 1 & r_{\m,n-1} \\
0 & \cdots & \cdots & 0 & 1 \\
\end{pmatrix}
P_{\mathscr{L}_{\m}}
\begin{pmatrix}
s_1 & 0 & \cdots & 0 & 0 \\
0 & \ddots & \ddots & \vdots & \vdots \\
\vdots & \ddots & \ddots & 0 & \vdots \\
0 & \cdots & 0 & s_{n-1} & 0 \\
0 & \cdots & \cdots & 0 & s_n \\
\end{pmatrix}
\textnormal{ (mod }\m\textnormal{)}
\]
for every $\m\in\mathit{\Lambda}\cap\Max(R)$.  Now, Lemma~\ref{lemma:sur-L} tells us that the first $n-1$ components of $Vf:=(g_1,\ldots,g_n)^{\top}$ form a map $(g_1,\ldots,g_{n-1})^{\top}$ that is $(t,X,\m)$-surjective for every $\m\in\mathit{\Lambda}\cap\Max(R)$.
\end{proof}

In the next section, we complete our proof of the Surjective Lemma.

\section{A Proof of the Surjective Lemma}\label{sec:sur-lemma}

Throughout this section, we assume the hypotheses of the Surjective Lemma (Lemma~\ref{lemma:sur}), and we let $\mathit{\Lambda}$ be defined as in Lemma~\ref{lemma:sur-Lambda} with respect to $F:=Rf_1+\cdots+Rf_n$ and $X$.  Since $t$ and $X$ are understood, we may use the terms \textit{$\p$-surjective} and \textit{$Y$-surjective} for any $\p\in X$ and for any $Y\subseteq X$ without the risk of confusion.

In this section, we find a matrix $V\in \GL(n,R)$ such that the first $n-1$ components of $Vf:=(g_1,\ldots,g_n)^{\top}$ form a map $g:=(g_1,\ldots,g_{n-1})^{\top}$ that is $\mathit{\Lambda}$-surjective.  Lemma~\ref{lemma:sur-reduction} will then tell us that $g$ is $X$-surjective and, hence, that we have proved the Surjective Lemma.

\begin{proof}[Proof of the Surjective Lemma] Let $\q_1,\ldots,\q_m$ be the distinct members of $\mathit{\Lambda}-\Max(R)$, and arrange $\q_1,\ldots,\q_m$ so that, for every $\ell\in\{1,\ldots,m\}$, the prime $\q_{\ell}$ is a minimal member of the set $\{\q_1,\ldots,\q_{\ell}\}$.  We prove, by induction on $\ell\geqslant 0$, that there exists $V\in\GL(n,R)$ such that the first $n-1$ components of $Vf:=(g_1,\ldots,g_n)^{\top}$ form a map $(g_1,\ldots,g_{n-1})^{\top}$ that is $\p$-surjective for every $\p\in \mathit{\Lambda}-\{\q_{\ell+1},\ldots,\q_m\}$. 

Lemma~\ref{lemma:sur-Max} proves the case in which $\ell=0$.  Suppose then that $1\leqslant \ell\leqslant m$ and that there exists $A\in\GL(n,R)$ such that the first $n-1$ components of $f^*:=Af:=(f'_1,\ldots,f'_n)^{\top}$ form a map $f':=(f'_1,\ldots,f'_{n-1})^{\top}$ that is $\p$-surjective for every $\p\in \mathit{\Lambda}-\{\q_{\ell},\ldots,\q_m\}$.  If $f'$ happens to be $\q_{\ell}$-surjective as well, then we may set $V=A$ to finish the inductive step.  Suppose then that $f'$ is not $\q_{\ell}$-surjective.  Define every object in Definition~\ref{definition:sur-F} with respect to our current hypotheses, with $\q_{\ell}$ taking the place of $\p$.

Let
\[
J
:=\bigcap_{\p\in \mathit{\Lambda}-\{\q_{\ell},\ldots,\q_m\}} \p.
\]
It suffices to find $r_1,\ldots,r_{n-1}\in J$ such that, if
\[
U
:=\begin{pmatrix}
1 & 0 & \cdots & 0 & r_1 \\
0 & \ddots & \ddots & \vdots & \vdots \\
\vdots & \ddots & \ddots & 0 & \vdots \\
0 & \cdots & 0 & 1 & r_{n-1} \\
0 & \cdots & \cdots & 0 & 1 \\
\end{pmatrix}
\]
and if $Uf^*:=(g_1,\ldots,g_n)^{\top}$, then $G:=Rg_1+\cdots+Rg_{n-1}$ satisfies $\partial_{\q_{\ell}}(G)=\partial_{\q_{\ell}}(F)$:  Given such $r_1,\ldots,r_{n-1}\in J$, we will see that the first $n-1$ components of $Uf^*=UAf:=(g_1,\ldots,g_n)^{\top}$ form a map $(g_1,\ldots,g_{n-1})^{\top}$ that is not only $\q_{\ell}$-surjective but, by Nakayama's Lemma, also $\p$-surjective for every $\p\in\mathit{\Lambda}-\{\q_{\ell},\ldots,\q_m\}$.  Thus we will be able to take $V:=UA$ to finish the inductive step and, thus, the proof overall.  Before we find such $r_1,\ldots,r_{n-1}\in J$, though, we must complete some more preparatory work.

To simplify notation, let $\q:=\q_{\ell}$ from now on.  First we show that $\partial_{\q}(F')=\partial_{\q}(F)-1$ and that $\partial_{\q}(F)\leqslant n-1$.  By Lemma~\ref{lemma:sur-del-one} and by our assumption that $f'$ is not $\q$-surjective, we have
\[
\partial_{\q}(F)-1\leqslant \partial_{\q}(F')< \min\{n-1,t+\dim_X(\q)\}\leqslant \partial_{\q}(F),
\]
and so $\partial_{\q}(F')=\partial_{\q}(F)-1$.  Now, if $\partial_{\q}(F)=n$, then $\partial_{\q}(F')=\partial_{\q}(F)-1=n-1$, and so $f'$ is $\q$-surjective, a contradiction.  Hence $\partial_{\q}(F)\leqslant n-1$.

Let $d:=\partial_{\p}(F)$, and let $B$ be a $d\times n$ matrix with entries in $R$ such that $(Bf^*)_{\q}$ is surjective.  Let $C\in \GL(d,R_{\q})$ such that $CB$ can be represented by a matrix $(b_{i,j})$ with entries from $R$ and such that $\overline{CB}$ is in the following row echelon form with the nonzero entries clustered toward the top right corner of the matrix and with $s\in R-\q$:
\[
\overline{CB}
=\begin{pmatrix}
\overline{0} & \cdots & \overline{0} & \overline{s} & \vdots & \overline{0} & \vdots & \overline{0} & \vdots \\
\overline{0} & \cdots & \overline{0} & \overline{0} & \vdots & \overline{s} & \vdots & \overline{0} & \vdots \\ 
\hdotsfor{9} \\
\overline{0} & \cdots & \overline{0} & \overline{0} & \vdots & \overline{0} & \vdots & \overline{s} & \vdots \\
\end{pmatrix}.
\]
Here, the vertical and horizontal ellipses denote possible omissions of entries, and the zero columns on the left may not be present.  Now, for every $i\in\{1,\ldots,d\}$, let $j_i$ be the smallest number in the set $\{1,\ldots,n\}$ such that $\overline{b_{i,j_i}}\neq\overline{0}$.  We assume that, for every $i\in\{1,\ldots,d\}$, the entry $\overline{b_{i,j_i}}$ is the only nonzero entry in the $(j_i)$th column of $\overline{CB}$.  Let 
\[
B^*
:=(b^*_{i,j})
:=\begin{pmatrix}
0 & \cdots & 0 & s & \vdots & 0 & \vdots & 0 & \vdots \\
0 & \cdots & 0 & 0 & \vdots & s & \vdots & 0 & \vdots \\ 
\hdotsfor{9} \\
0 & \cdots & 0 & 0 & \vdots & 0 & \vdots & s & \vdots \\
\end{pmatrix}
\]
be a $d\times n$ matrix with entries in $R$ that satisfies the following conditions:
\begin{enumerate}
\item $\overline{B^*}=\overline{CB}$.
\item For every $i\in\{1,\ldots,d\}$ and for every $j\in\{1,\ldots,n\}$, if $\overline{b_{i,j}}=\overline{0}$, then $b^*_{i,j}=0$.
\item For every $i\in\{1,\ldots,d\}$, we have $b^*_{i,j_i}=s$.
\end{enumerate}
Hence $B^*f^*\in F^{\oplus d}$, and $\overline{B^*f^*}$ is surjective.  Nakayama's Lemma then tells us that $(B^*f^*)_{\q}$ is surjective.  Thus, we assume, without loss of generality, that $B=CB=(b_{i,j})$ and that $B$ already has the desirable form of $B^*$. 

Since $(Bf^*)_{\q}$ is surjective, there exists a finitely generated $R$-submodule $L$ of $M$ such that the restriction of $(Bf^*)_{\q}$ to $L_{\q}$ is surjective.  We may assume, then, without loss of generality, that $M_{\q}$ is a finitely generated $R_{\q}$-module.

Let $\mu:=\mu_{R_{\q}}(M_{\q})$, and let $\nu:=\mu_{R_{\q}}(N_{\q})$.  Since $\q\in\Supp_R(N)$, we see that $\nu\geqslant 1$.  Since $f$ is $\q$-surjective, $d\geqslant t\geqslant 1$, and so Nakayama's Lemma tells us that $\mu\geqslant d\nu\geqslant 1$.  In fact, without loss of generality, we may assume that $\mu=d\nu$.

Let $E:=(\varepsilon_1,\ldots,\varepsilon_{ d\nu})^{\top}$ be an ordered $ d\nu$-tuple of elements of $M_{\q}$ such that $\{\varepsilon_1,\ldots,\varepsilon_{ d\nu}\}$ is a minimal generating set for $M_{\q}$ over $R_{\q}$, and let $Z:=(\zeta_1,\ldots,\zeta_{\nu})^{\top}$ be an ordered $\nu$-tuple of elements of $N_{\q}$ such that $\{\zeta_1,\ldots,\zeta_{\nu}\}$ is a minimal generating set for $N_{\q}$ over $R_{\q}$.  For every $i\in \{1,\ldots,n\}$, let $\varphi'_i:=(f'_i)_{\q}$, and let $\mathit{\Phi}'_i$ be a $\nu\times  d\nu$ matrix with entries in $R_{\q}$ that represents $\varphi'_i$ with respect to $E$ and $Z$ in the following sense:  For every $j\in\{1,\ldots, d\nu\}$, if $\theta_{1,j},\ldots,\theta_{\nu,j}\in R_{\q}$ such that $\varphi'_i(\varepsilon_j)=\theta_{1,j}\zeta_1+\cdots+\theta_{\nu,j}\zeta_{\nu}$, then we may define the $j$th column of $\mathit{\Phi}'_i$ to be
\[
\begin{pmatrix}
\theta_{1,j} \\
\vdots \\
\theta_{\nu,j} \\
\end{pmatrix}.
\] Now let $\mathit{\Phi}^*$ be the $n\nu \times  d\nu$ matrix whose $i$th $\nu\times  d\nu$ block is $\mathit{\Phi}'_i$.  Hence
\[
\mathit{\Phi}^*=
\begin{pmatrix}
\mathit{\Phi}'_1 \\
\vdots \\
\mathit{\Phi}'_n \\
\end{pmatrix}.
\]

Finally, we let $\rank(\mathit{\Xi})$ denote the rank of a matrix $\mathit{\Xi}$ with entries in $\kappa(\q)$.

We now return to the task of finding $r_1,\ldots,r_{n-1}\in J$ that satisfy the criteria described earlier.  We consider two cases.\\

\setlength{\leftskip}{0.25in}

\underline{Case 1:  $j_d\leqslant n-1$.}\vspace{1mm}
In this case, $B$ has the following form:  
\[
B
=\begin{pmatrix}
\vdots & s & \vdots & 0 & \vdots & 0 & \vdots & b_{1,n} \\
\vdots & 0 	& \vdots & s & \vdots & 0 & \vdots & b_{2,n} \\ 
\hdotsfor{8} \\
\vdots & 0 & \vdots & 0 & \vdots & s & \vdots & b_{d,n} \\
\end{pmatrix}.
\]
Let $r_j=0\in J$ for every $j\in \{1,\ldots,n-1\}-\{j_1,\ldots,j_d\}$.

Let $i\in\{1,\ldots,d\}$, and suppose that we have defined $r_{j_1},\ldots,r_{j_{(i-1)}}\in J$.  Let
\[
B_i
:=\begin{pmatrix}
\vdots & s	& \vdots & 0 & \vdots & 0 & \vdots & 0 & \vdots & sr_{j_1} \\
\hdotsfor{10} \\
\vdots & 0 & \vdots & s & \vdots & 0 & \vdots & 0 & \vdots & sr_{j_{(i-1)}} \\ 
\vdots & 0 & \vdots & 0 & \vdots & s & \vdots & 0 & \vdots & 0 \\
\vdots & 0 	& \vdots & 0 & \vdots & 0 & \vdots & s & \vdots& b_{i+1,n} \\
\hdotsfor{10} \\
\end{pmatrix}
\]
be the $d\times n$ matrix obtained from $B$ by replacing $b_{1,n},\ldots,b_{i-1,n},b_{i,n}$ with $sr_{j_1},\ldots,sr_{j_{(i-1)}},$ $0$, respectively.  Let $\mathit{\Omega}_i:=(B_i\otimes I_{\nu})\mathit{\Phi}^*$, and let
\[
\mathit{\Omega}'_i
:=\begin{pmatrix}
0 \\
\vdots \\
0 \\
\mathit{\Phi}'_n \\
0 \\
\vdots \\
0 \\
\end{pmatrix}
\]
be the $d\nu\times d\nu$ matrix obtained by replacing the $i$th $\nu\times d\nu$ block of the zero $d\nu\times d\nu$ matrix with $\mathit{\Phi}'_n$.  Suppose that $\rank\left(\overline{\mathit{\Omega}_i+b_{i,n}\mathit{\Omega}'_i}\right)=d\nu$.  We will prove that there exists $r_{j_i}\in J$ such that $\rank\left(\overline{\mathit{\Omega}_i+sr_{j_i}\mathit{\Omega}'_i}\right)=d\nu$.  

Let $\mathscr{I}$ denote the ideal $(sJ+\q)/\q$ of $R/\q$.  Since $\q$ is a nonmaximal prime ideal of $R$, we see that $R/\q$ is an infinite domain.  Since $s\in R-\q$ and since $J\not\subseteq\q$, the ideal $\mathscr{I}$ is nonzero, hence infinite.

Let
\[
\mathscr{S}_i:=\left\{\sigma\in\kappa(\q):\rank\left(\overline{\mathit{\Omega}_i}+\sigma\overline{\mathit{\Omega}'_i}\right)\leqslant d\nu-1\right\}.
\]
We will show that $\mathscr{I}$ contains an element $\rho_i$ that avoids $\mathscr{S}_i$.   Let $\mathscr{D}_i(x)$ denote the determinant of $\overline{\mathit{\Omega}_i}+x\overline{\mathit{\Omega}'_i}$, where $x$ is a variable.  Since $\rank\left(\overline{\mathit{\Omega}_i+b_{i,n}\mathit{\Omega}'_i}\right)=d\nu$, we see that $\mathscr{D}_i\left(\overline{b_{i,n}}\right)\neq\overline{0}$.  Hence $\mathscr{D}_i(x)$ is a nonzero polynomial.  Since the degree of $\mathscr{D}_i(x)$ is at most $\nu$, we see that $|\mathscr{S}_i|\leqslant \nu$.  Since $\mathscr{I}$ is infinite, $\mathscr{I}$ must then contain an element $\rho_i$ that avoids $\mathscr{S}_i$.

Now let $r_{j_i}\in J$ such that $\overline{sr_{j_i}}=\rho_i$.  Then $\rank\left(\overline{\mathit{\Omega}_i+sr_{j_i}\mathit{\Omega}'_i}\right)=d\nu$, as promised.

By induction, then, we can define matrices $B_1,\mathit{\Omega}_1,\mathit{\Omega}'_1,\ldots,B_d,\mathit{\Omega}_d,\mathit{\Omega}'_d$ and $r_{j_1},\ldots,r_{j_d}\in J$ such that $\rank\left(\overline{\mathit{\Omega}_d+sr_{j_d}\mathit{\Omega}'_d}\right)=d\nu$.  Now, let $B'$ be the $d\times (n-1)$ matrix obtained by deleting the $n$th column of $B$; let
\[
U
:=\begin{pmatrix}
1 & 0 & \cdots & 0 & r_1 \\
0 & \ddots & \ddots & \vdots & \vdots \\
\vdots & \ddots & \ddots & 0 & \vdots \\
0 & \cdots & 0 & 1 & r_{n-1} \\
0 & \cdots & \cdots & 0 & 1 \\
\end{pmatrix};
\]
and let $\mathit{\Gamma}$ denote the $(n-1)\nu\times  d\nu$ matrix obtained by deleting the $n$th $\nu\times  d\nu$ block of $(U\otimes I_{\nu})\mathit{\Phi}^*$.  Then $(B'\otimes I_{\nu})\mathit{\Gamma}=\mathit{\Omega}_d+sr_{j_d}\mathit{\Omega}'_d$, and so $\rank\left[\overline{(B'\otimes I_{\nu})\mathit{\Gamma}}\right]=d\nu$.  Let $Uf^*:=(g_1,\ldots,g_n)^{\top}$, and let $G:=Rg_1+\cdots+Rg_{n-1}$.  Then, by Nakayama's Lemma, $(B'\otimes I_{\nu})\mathit{\Gamma}$ represents a surjection in $G^{\oplus d}_{\q}$ from $M_{\q}$ to $N_{\q}^{\oplus d}$, and so $\partial_{\q}(G)=d=\partial_{\q}(F)$, as desired. \\

\underline{Case 2:  $j_d=n$.}\vspace{1mm}
In this case, $B$ has the following form:
\[
B
=\begin{pmatrix}
\vdots & s & \vdots & 0 & \vdots & 0 \\
\vdots & 0 	& \vdots & s & \vdots & 0 \\ 
\hdotsfor{6} \\
\vdots & 0 & \vdots & 0 & \vdots & s \\
\end{pmatrix}.
\]
Since $d\leqslant n-1$ and since $j_d=n$, there is $k\in \{1,\ldots,n-1\}-\{j_1,\ldots,j_{d-1}\}$.
Let $\mathit{\Omega}:=(B\otimes I_{\nu})\mathit{\Phi}^*$, and let
\[
\mathit{\Omega}'
:=\begin{pmatrix}
0 \\
\vdots \\
0 \\
\mathit{\Phi}'_k \\
\end{pmatrix}
\]
be the $d\nu\times  d\nu$ matrix obtained by replacing the $d$th $\nu\times d\nu$ block of the zero $d\nu\times d\nu$ matrix with $\mathit{\Phi}'_k$.

Let $\mathscr{J}$ denote the ideal $(J+\q)/\q$ of $R/\q$.  Since $\mathscr{I}\subseteq\mathscr{J}$, we see that $\mathscr{J}$ is infinite.  

Let
\[
\mathscr{S}:=\left\{\sigma\in\kappa(\q):\rank\left(\overline{\mathit{\Omega}}+\sigma\overline{\mathit{\Omega}'}\right)\leqslant d\nu-1\right\}.
\]
We will show that $\mathscr{J}$ contains a nonzero element $\rho$ such that $\rho^{-1}$ avoids $\mathscr{S}$.  Let $\mathscr{D}(x)$ be the determinant of $\overline{\mathit{\Omega}}+x\overline{\mathit{\Omega}'}$, where $x$ is a variable.  Since $\rank\left(\overline{\mathit{\Omega}+0\mathit{\Omega}'}\right)=\rank\left(\overline{\mathit{\Omega}}\right)=d\nu$, we see that $\mathscr{D}\left(\overline{0}\right)\neq\overline{0}$.  Hence $\mathscr{D}(x)$ is a nonzero polynomial.  Since the degree of $\mathscr{D}(x)$ is at most $\nu$, we see that $|\mathscr{S}|\leqslant \nu$.  Since $\mathscr{J}$ is infinite, $\mathscr{J}$ must then contain a nonzero element $\rho$ such that $\rho^{-1}$ avoids $\mathscr{S}$.

Now let $r\in J-\q$ such that $\overline{r}=\rho$, and let $1/r$ denote the multiplicative inverse of the element $r/1$ of $R_{\q}$ so that $\overline{(1/r)}=\rho^{-1}$.

Let
\[
B_1=\begin{pmatrix}
\vdots & b_{1,k} & \vdots & 0 \\
\vdots & b_{2,k} & \vdots & 0 \\
\hdotsfor{4} \\
\vdots & b_{d-1,k} & \vdots & 0 \\ 
\vdots & 1/r & \vdots & s \\
\end{pmatrix}
\]
be the $d\times n$ matrix obtained from $B$ by replacing $b_{d,k}=0$ with $1/r$.

Note that $(B_1\otimes I_{\nu})\mathit{\Phi}^*=\mathit{\Omega}+(1/r)\mathit{\Omega}'$ so that $\rank\left[\overline{(B_1\otimes I_{\nu})\mathit{\Phi}^*}\right]=d\nu$.

Next, let
\[
B_2:=\begin{pmatrix}
1 & 0 & \cdots & 0 & -rb_{1,k} \\
0 & \ddots & \ddots & \vdots & -rb_{2,k} \\
\vdots & \ddots & \ddots & 0 & \vdots \\
0 & \cdots & 0 & 1 & -rb_{d-1,k} \\
0 & \cdots & \cdots & 0 & rs \\
\end{pmatrix}
\in\GL(d,R_{\q}).
\]
Then $\rank\left[\overline{(B_2B_1\otimes I_{\nu})\mathit{\Phi}^*}\right]=d\nu$.  Also,
\[
B_2B_1
=\begin{pmatrix}
\vdots & 0 & \vdots & s(-rb_{1,k}) \\
\vdots & 0 & \vdots & s(-rb_{2,k}) \\
\hdotsfor{4} \\
\vdots & 0 & \vdots & s(-rb_{d-1,k}) \\ 
\vdots & s & \vdots & s(rs) \\
\end{pmatrix},
\]
where the column $(0,0,\ldots,0,s)^{\top}$ displayed above is the $k$th column of $B_2B_1$.  Now permute the rows of $B_2B_1$ to yield a matrix $B_3$ such that $\overline{B_3}$ is in row echelon form.  Then $\rank\left[\overline{(B_3\otimes I_{\nu})\mathit{\Phi}^*}\right]=d\nu$, and so we have reduced to Case 1.\\

\setlength{\leftskip}{0in}

This completes the inductive step of our proof.
\end{proof}

Now that we have worked through our proof of the Surjective Lemma, we can reveal why we address the members of $\mathit{\Lambda}\cap\Max(R)$ separately in Section~\ref{sec:Max}.  Let $\q$ be defined as in the main inductive step of this section.  Since $\q$ is a nonmaximal prime ideal of $R$, we see that $R/\q$ is an infinite domain and, hence, that the nonzero ideals $\mathscr{I}$ and $\mathscr{J}$ of $R/\q$ are also infinite.  In Case 1, we find that, for every $i\in\{1,\ldots,d\}$, there is an element of $\mathscr{I}$ that avoids $\mathscr{S}_i$ since $|\mathscr{S}_i|\leqslant \mu_{R_{\q}}(N_{\q})<\infty$.  In Case 2, we find that $\mathscr{J}$ must have a nonzero element whose multiplicative inverse avoids $\mathscr{S}$ since $|\{\overline{0}\}\cup\mathscr{S}|\leqslant 1+\mu_{R_{\q}}(N_{\q})<\infty$.  These lines of reasoning, \textit{mutatis mutandis}, are not necessarily available for a given $\m\in\mathit{\Lambda}\cap\Max(R)$.  In particular, it is not always the case that $R/\m$ is infinite or even that $|R/\m|\geqslant 2+\mu_{R_{\m}}(N_{\m})$.  Hence, in general, we cannot mimic the method that we use on the members of $\mathit{\Lambda}-\Max(R)$ to treat the members of $\mathit{\Lambda}\cap\Max(R)$.  The possibility that $|R/\m|\leqslant 1+\mu_{R_{\m}}(N_{\m})$ for some $\m\in\mathit{\Lambda}\cap\Max(R)$ is what compelled us to find a special method for dealing with the members of $\mathit{\Lambda}\cap\Max(R)$, and it is this method that we present in Section~\ref{sec:Max}.

This is not the only method that works.  In fact, there is an alternative to the method of Sections~\ref{sec:Max} and~\ref{sec:sur-lemma} that automatically simplifies our proof of the Surjective Lemma in a special case:  With respect to Lemma~\ref{lemma:sur-L}, we can define
\[
\mathit{\Lambda}_i:=\{\m\in\mathit{\Lambda}\cap\Max(R):\mathscr{L}_{\m}=i\textnormal{ and }|R/\m|\leqslant 1+\mu_{R_{\m}}(N_{\m})\}
\]
for every $i\in\{1,\ldots,n\}$, use Lemmas~\ref{lemma:sur-L} and~\ref{lemma:sur-Q} to account for the members of $\mathit{\Lambda}_1\cup\cdots\cup\mathit{\Lambda}_n$ only, and then proceed by induction on the remaining members of $\mathit{\Lambda}$ as in Section~\ref{sec:sur-lemma}.  The benefit of this approach is that, if $|R/\m|\geqslant 2+\mu_{R_{\m}}(N_{\m})$ for every $\m\in\mathit{\Lambda}\cap\Max(R)$, then $\mathit{\Lambda}_1,\ldots,\mathit{\Lambda}_n$ are all empty, and so it suffices to use the method of Section~\ref{sec:sur-lemma} for the entirety of~$\mathit{\Lambda}$.  

In another special case, the proof of the Surjective Lemma does not go through as quickly, but the result of the inductive step is still comparable to that of Section~\ref{sec:sur-lemma}: If $\mu_{R_{\m}}(N_{\m})=1$ for every $\m\in\mathit{\Lambda}\cap\Max(R)$, then we can mimic the method used in the inductive step of the proof of~\cite[Lemma~3.8]{DSPY} for the members of $\mathit{\Lambda}\cap\Max(R)$, and we can apply the method of Section 5 of this paper for the remaining members of $\mathit{\Lambda}$.  This case is noteworthy in the following sense:  Let $\q_1,\ldots,\q_m$ be the distinct members of $\mathit{\Lambda}$, and arrange $\q_1,\ldots,\q_m$ so that, for every $\ell\in\{1,\ldots,m\}$, the prime $\q_{\ell}$ is a minimal member of the set $\{\q_1,\ldots,\q_{\ell}\}$.  Also, list the members of $\mathit{\Lambda}\cap\Max(R)$ first.  Let $\ell\in\{1,\ldots,m\}$, and let 
\[
J:=\bigcap_{i=1}^{\ell-1} \q_i.
\]
Suppose that there exists $A\in\GL(n,R)$ such that the first $n-1$ components of $f^*:=Af:=(f'_1,\ldots,f'_n)^{\top}$ form a map $(f'_1,\ldots,f'_{n-1})^{\top}$ that is $\q_i$-surjective for every $i\in\{1,\ldots,\ell-1\}$.  Then there exist $r_1,\ldots,r_{n-1}\in J$ such that, if
\[
U
:=\begin{pmatrix}
1 & 0 & \cdots & 0 & r_1 \\
0 & \ddots & \ddots & \vdots & \vdots \\
\vdots & \ddots & \ddots & 0 & \vdots \\
0 & \cdots & 0 & 1 & r_{n-1} \\
0 & \cdots & \cdots & 0 & 1 \\
\end{pmatrix},
\]
then the first $n-1$ components of $Uf^*:=(g_1,\ldots,g_n)^{\top}$ form a map $(g_1,\ldots,g_{n-1})^{\top}$ that is $\q_i$-surjective for every $i\in\{1,\ldots,\ell\}$.

We can combine the two special cases that we have mentioned to yield the following corollary of the Surjective Lemma:

\begin{corollary}\label{corollary:sur-res}
Assume the hypotheses of the Surjective Lemma.  Define $\mathit{\Lambda}$ as in Lemma~\ref{lemma:sur-Lambda} with respect to $F:=Rf_1+\cdots+Rf_n$ and $X$.  Suppose that, for every $\m\in\mathit{\Lambda}\cap\Max(R)$, one of the following conditions holds:
\begin{enumerate}
\item $|R/\m|\geqslant 2+\mu_{R_{\m}}(N_{\m})$.
\item $\mu_{R_{\m}}(N_{\m})=1$.
\end{enumerate}
(For example, we may suppose that every residue field of $R$ is infinite or that $N$ is a locally cyclic $R$-module.)  Then there exist $r_1,\ldots,r_{n-1}\in R$ such that $(f_1+r_1f_n,\ldots,f_{n-1}+r_{n-1}f_n)^{\top}$ is $(t,X,X)$-surjective.
\end{corollary}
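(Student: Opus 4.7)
The plan is to prove this strengthened form of the Surjective Lemma by a unified induction on the primes of $\mathit{\Lambda}$, in which the modifying matrix at each step has the restricted shape required by the corollary: the identity with last column $(r_1,\ldots,r_{n-1},1)^{\top}$, and no permutation or scaling. List the distinct members $\q_1,\ldots,\q_m$ of $\mathit{\Lambda}$ so that, for each $\ell$, $\q_\ell$ is minimal in $\{\q_1,\ldots,\q_\ell\}$. Inductively, suppose we have $r_1,\ldots,r_{n-1}\in R$ for which $(f_1+r_1f_n,\ldots,f_{n-1}+r_{n-1}f_n)^{\top}$ is $\q_i$-surjective for every $i<\ell$, and set $J:=\bigcap_{i<\ell}\q_i$. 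The inductive step will consist of finding $r'_1,\ldots,r'_{n-1}\in J$ such that, after replacing each $r_i$ by $r_i+r'_i$, we additionally gain $\q_\ell$-surjectivity; the $\q_i$-surjectivity for $i<\ell$ is preserved automatically by Nakayama's Lemma since $r'_i\in\q_i$.

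I would then split the inductive step into three cases. If $\q_\ell\notin\Max(R)$, the argument of Section~\ref{sec:sur-lemma} (its Case~1 or Case~2, depending on whether $j_d\leqslant n-1$ or $j_d=n$) carries over verbatim, since $R/\q_\ell$ is an infinite domain and the corresponding residue ideals $\mathscr{I},\mathscr{J}$ are infinite. If $\q_\ell=\m\in\Max(R)$ satisfies condition~(1), then the minimal-first ordering and the maximality of $\m$ force every earlier $\q_i$ to be incomparable to $\m$ and hence comaximal with it; the Chinese Remainder Theorem yields $J+\m=R$, so both $\mathscr{I}=(sJ+\m)/\m$ and $\mathscr{J}=(J+\m)/\m$ coincide with the entire field $R/\m$. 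Setting $\nu:=\mu_{R_{\m}}(N_\m)$, the bad sets $\mathscr{S}_i$ and $\mathscr{S}$ of Section~\ref{sec:sur-lemma} have cardinality at most $\nu$, while condition~(1) provides $|R/\m|\geqslant\nu+2$, which is strictly larger than $|\mathscr{S}_i|$ and than $1+|\mathscr{S}|$; thus we can still select $\rho_i\in\mathscr{I}$ avoiding $\mathscr{S}_i$ in Case~1, or a nonzero $\rho\in\mathscr{J}$ with $\rho^{-1}\notin\mathscr{S}$ in Case~2. If instead $\q_\ell=\m\in\Max(R)$ satisfies condition~(2), the residue field may be too small for the polynomial-degree argument; in that case, I would adapt the inductive step of the proof of~\cite[Lemma~3.8]{DSPY}, which exploits the cyclicity of $N_\m$ over $R_\m$ to produce the required $r'_i\in J$ by direct row reduction together with Nakayama's Lemma, with no counting over $R/\m$.

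The main obstacle is guaranteeing, in the maximal case under~(1), that $J$ is not contained in $\m$, so that $\mathscr{I}$ and $\mathscr{J}$ are genuinely all of $R/\m$ rather than zero. This is precisely what the minimal-first ordering supplies, via the elementary observation that ideals comaximal with a maximal ideal $\m$ intersect to an ideal still comaximal with $\m$. Once the induction terminates, the resulting tuple $(f_1+r_1f_n,\ldots,f_{n-1}+r_{n-1}f_n)^{\top}$ is $(t,X,\mathit{\Lambda})$-surjective, and Lemma~\ref{lemma:sur-reduction} upgrades this to $(t,X,X)$-surjectivity. The parenthetical examples follow at once: infinite residue fields satisfy~(1) vacuously, and local cyclicity of $N$ forces $\mu_{R_\m}(N_\m)\in\{0,1\}$ at every maximal ideal---but $\m\in\mathit{\Lambda}\cap\Max(R)\subseteq\Supp_R(N)$ rules out $\mu_{R_\m}(N_\m)=0$, so condition~(2) holds.
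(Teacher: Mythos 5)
Your proposal is correct and follows essentially the same route the paper sketches in the discussion preceding the corollary: a unified minimal-first induction over all of $\mathit{\Lambda}$ in which the modifying matrix at each step is unipotent with only the last column perturbed, using the Section~\ref{sec:sur-lemma} polynomial-counting argument both for the nonmaximal primes and (thanks to condition~(1), which makes $|\mathscr{I}|=|\mathscr{J}|=|R/\m|\geqslant 2+\nu$ after observing $J+\m=R$) for the maximal ones, and falling back to the method of~\cite[Lemma~3.8]{DSPY} at maximal ideals where only condition~(2) holds. Your observation that the minimal-first ordering alone guarantees $J\not\subseteq\q_\ell$---and hence $J+\m=R$ when $\q_\ell=\m$ is maximal---is exactly the point the paper exploits by listing the members of $\mathit{\Lambda}\cap\Max(R)$ first, so there is no genuine difference of approach.
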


The conclusion of this corollary can be compared to that of~\cite[Theorem~B]{EE}.  We leave it to the reader to spell out the ramifications of this corollary for Theorems~\ref{theorem:sur-Bas},~\ref{theorem:sur-Ser},~\ref{theorem:sur-prelim}, and~\ref{theorem:sur-Bas-var}.

Despite their benefits, the alternative approaches to the Surjective Lemma have an obvious drawback:  Conditions on the sizes of residue fields and minimal generating sets do not receive proper context until the middle of Section~\ref{sec:sur-lemma}.  For this reason, we decided to present a method that avoids specific reference to the sizes of residue fields and minimal generating sets when handling the members of $\mathit{\Lambda}\cap\Max(R)$.  We remain faithful to this decision in our presentation of the analogous results of the next section.

\section{Proofs of Theorems~\texorpdfstring{\ref{theorem:spl-Bas}}{0.5} and~\texorpdfstring{\ref{theorem:spl-Ser}}{0.9}}\label{sec:spl}

Throughout this section, let $R$ denote a commutative ring; let $S$ denote a module-finite $R$-algebra; let $M$ denote a right $S$-module; and let $N$ denote a finitely presented right $S$-module.  As before, we view every left and right $S$-module as a standard $R$-module in the natural way.

In this section, we prove Theorems~\ref{theorem:spl-Bas} and~\ref{theorem:spl-Ser}.  Since many of the techniques here are similar to those that we use in Sections~\ref{sec:sur-Ser}--\ref{sec:sur-lemma}, we do not provide as much detail here as before.  Still, we state all of the necessary definitions and lemmas, and we indicate the major differences between the proofs here and their earlier analogues.

Of note is the fact that there are only four results in this section that do not require $N$ to be finitely presented over $S$:  For Remark~\ref{remark:spl-infty}, Remark~\ref{remark:spl-nXp}, Lemma~\ref{lemma:spl-del-one}, and Lemma \ref{lemma:spl-L}, it suffices for $N$ to be finitely generated over $S$.  Every other result in this section ultimately relies on Lemma~\ref{lemma:spl-closed}, and Lemma~\ref{lemma:spl-closed} relies on the finite presentation of $N$ over $S$.

We begin with the following definitions and remarks:

\begin{definition}\label{definition:delta}
Let $F$ be an $R$-submodule of $\Hom_S(M,N)$, and let $\p\in\Spec(R)$.  We let $\delta(F)$ denote the supremum of the nonnegative integers $t$ such that there exists $f\in F^{\oplus t}\subseteq \Hom_S(M,N^{\oplus t})$ that is split surjective over $S$.  We let $\delta_{\p}(F)$ denote the supremum of the nonnegative integers $t$ such that there exists $f\in F^{\oplus t}$ with the property that $f_{\p}$ is split surjective over $S_{\p}$.
\end{definition}

\begin{remark}\label{remark:spl-infty}
Let $F$ be a finitely generated $R$-submodule of $\Hom_S(M,N)$.  We observe that $\delta(F)=\infty$ if and only if $N=0$:  Certainly, if $N=0$, then $\delta(F)=\infty$.  On the other hand, if $\delta(F)=\infty$, then $\partial(F)=\infty$, and so $N=0$ by Remark~\ref{remark:infty}.

Let $\p\in\Spec(R)$.  Then, by the preceding discussion, $\delta_{\p}(F)=\infty$ if and only if $\p\not\in\Supp_R(N)$.
\end{remark}

\begin{definition}\label{definition:spl-tXp}
Let $n,t$ be positive integers with $n\geqslant t$; let $\p\in X\subseteq\Spec(R)$; and let $f:=(f_1,\ldots,f_n)^{\top}\in\Hom_S(M,N^{\oplus n})$.  We say that $f$ is \textit{$(t,X,\p)$-split} if $\delta_{\p}(Rf_1+\cdots + Rf_n)\geqslant \min\{n,t+\dim_X(\p)\}$.

Let $Y\subseteq X$.  We say that $f$ is \textit{$(t,X,Y)$-split} if $f$ is $(t,X,\q)$-split for every $\q\in Y$.

When $t$ and $X$ are understood, we use the terms \textit{$\p$-split} and \textit{$Y$-split} in place of \textit{$(t,X,\p)$-split} and \textit{$(t,X,Y)$-split}, respectively. 
\end{definition}

\begin{remark}\label{remark:spl-nXp}
Maintaining the hypotheses in the previous definition, we see that $f$ is $(n,X,\p)$-split if and only if $f_{\p}$ is split surjective over $S_{\p}$.  The reasoning is basically the same as in Remark~\ref{remark:nXp}.
\end{remark}

We now state an analogue of the Surjective Lemma (Lemma~\ref{lemma:sur}).

\begin{lemma}[Splitting Lemma]\label{lemma:spl}
Let $n,t$ be positive integers with $n\geqslant 1+t$, and let $X$ be a subset of $\Supp_R(N)$ that is a basic set for $R$.  Let $f:=(f_1,\ldots,f_n)^{\top}\in\Hom_S(M,N^{\oplus n})$, and suppose that $f$ is $(t,X,X)$-split.  Then there exist $f'_1,\ldots,f'_{n-1}\in Rf_1+\cdots + Rf_n$ such that $f':=(f'_1,\ldots, f'_{n-1})^{\top}$ is $(t,X,X)$-split.
\end{lemma}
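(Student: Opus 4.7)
The plan is to mirror the proof of the Surjective Lemma step by step, with $\delta$ in place of $\partial$ and the word \emph{split} in place of \emph{surjective} throughout. First I would invoke the lower semicontinuity of $\delta$, namely Lemma~\ref{lemma:spl-closed}, whose hypothesis of finite presentation of $N$ over $S$ is what forces the split-surjective locus to be open. Using this, I would produce a finite set $\mathit{\Lambda}\subseteq X$ analogous to the one in Lemma~\ref{lemma:sur-Lambda} with the property that, for every $\p\in X-\mathit{\Lambda}$, there exists $\q\in\mathit{\Lambda}$ with $\q\subsetneq\p$ and $\delta_{\q}(F)=\delta_{\p}(F)$, where $F:=Rf_1+\cdots+Rf_n$.

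Next I would establish the splitting analogue of Lemma~\ref{lemma:sur-del-one}: for any $A\in\GL(n,R)$, the map $f'$ obtained by dropping the last row of $Af$ satisfies $\delta_{\p}(F')\geqslant\delta_{\p}(F)-1$. The matrix manipulation is verbatim that of the proof of Lemma~\ref{lemma:sur-del-one}: given a $d\times n$ matrix $B$ such that $(Bf)_{\p}$ is split surjective, clear the last column of $CBA^{-1}$ modulo $\p$ via a suitable $C\in\GL(d,R_{\p})$, and extract the upper-left $(d-1)\times(n-1)$ block $B'$. The key point is that $C$ preserves split surjectivity (compose the existing section with $C^{-1}$), and then passing from $\overline{B'f'}$ back to $(B'f')_{\p}$ is handled by Nakayama together with finite presentation of $N$. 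Combined with the $\mathit{\Lambda}$ above, this gives the reduction lemma: if the reduced $f'$ is $\mathit{\Lambda}$-split, then it is $X$-split, just as in Lemma~\ref{lemma:sur-reduction}.

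It then remains to produce a matrix $V\in\GL(n,R)$ so that the first $n-1$ components of $Vf$ form a $\mathit{\Lambda}$-split map. For $\m\in\mathit{\Lambda}\cap\Max(R)$, I would prove a splitting version of Lemma~\ref{lemma:sur-L}: working modulo $\m$, put $B$ into reduced row echelon form and read off elements $r_{\m,1},\ldots,r_{\m,n-1}\in R$ and $\mathscr{L}_{\m}\in\{1,\ldots,n\}$ that encode the required column and row adjustments. Here the Nakayama-style lift works identically because a map over $R_{\m}$ is split surjective iff its reduction modulo $\m$ is (again using finite presentation of $N$). Lemma~\ref{lemma:sur-Q} is statement-level independent of the capacity $\partial$ or $\delta$, so I would reuse it verbatim to glue the data for all $\m\in\mathit{\Lambda}\cap\Max(R)$ into a single matrix $V$.

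For the non-maximal primes $\q_1,\ldots,\q_m$ in $\mathit{\Lambda}$, I would argue by induction exactly as in Section~\ref{sec:sur-lemma}, left-multiplying by a unipotent matrix $U$ whose entries $r_1,\ldots,r_{n-1}$ live in the ideal $J:=\bigcap_{i<\ell}\q_i$. The polynomial-rank argument in Cases~1 and~2 carries over unchanged because the only properties of $\q_\ell$ it used were that $R/\q_\ell$ is an infinite domain (so $\mathscr{I}$ and $\mathscr{J}$ are infinite) and that $(B'f')_{\q_\ell}$ is split surjective once its reduction modulo $\q_\ell$ has maximal rank---and this last point is precisely where finite presentation of $N$ lets split surjectivity descend from $\kappa(\q_\ell)$ to $S_{\q_\ell}$. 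The main obstacle I expect is bookkeeping: I must verify at each stage that the lift from $\kappa(\p)$-rank conditions to $S_{\p}$-level split surjectivity is legitimate, since unlike in the surjective case the conclusion is not just a Nakayama statement about images but about the existence of a section. Once Lemma~\ref{lemma:spl-closed} and the finite presentation of $N$ are in hand, this descent proceeds uniformly across all $\p\in X$, and the rest of the proof is essentially a transcription of the arguments in Sections~\ref{sec:Lambda}--\ref{sec:sur-lemma}.
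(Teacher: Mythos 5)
There is a genuine gap, and it is at the heart of where the Splitting Lemma differs from the Surjective Lemma. Your plan is to run the arguments of Sections~\ref{sec:Lambda}--\ref{sec:sur-lemma} ``verbatim'' with $\delta$ replacing $\partial$, and you justify the key lift from $\kappa(\p)$-rank conditions to $S_\p$-level split surjectivity by appealing to finite presentation of $N$. But finite presentation does not do that work. Having full rank after tensoring with $\kappa(\q)$ only gives surjectivity of $(B'g)_\q$ via Nakayama; it does not produce a section. A concrete counterexample to the principle you invoke: $R=k[[x]]$, $M=R$, $N=R/(x)$, $g:R\to R/(x)$ the quotient map. The reduction $\overline{g}:k\to k$ is an isomorphism, hence ``split,'' and $N$ is finitely presented, yet $g$ is not split surjective. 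So the sentence ``a map over $R_\m$ is split surjective iff its reduction modulo $\m$ is'' is false, and the analogous claim for the nonmaximal primes $\q_\ell$ in Cases~1 and~2 fails for the same reason.

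The paper's actual proof sidesteps this with a reduction you omit. After choosing $B$ so that $(Bf^*)_\q$ is split surjective, one restricts $M$ along the section, and may therefore assume $M_\q=N_\q^{\oplus d}$ where $d:=\delta_\q(F)$. Once that is in place, any member of $G_\q^{\oplus d}$ that is merely surjective is an $S_\q$-linear surjective endomorphism of the finitely generated $R_\q$-module $N_\q^{\oplus d}$, hence an isomorphism, hence split. In other words, the reduction collapses $\delta_\q(G)=d$ to the statement $\partial_\q(G)=d$, which \emph{is} a Nakayama statement about images, and only then can the residue-field rank arguments of Cases~1 and~2 be transcribed literally. Without first reducing to $M_\q=N_\q^{\oplus d}$ there is no mechanism to obtain a section, and your inductive step does not close.

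Two secondary remarks. First, for the maximal-ideal half (the analogue of Lemma~\ref{lemma:sur-L}), your stated justification is wrong for the same reason, but the conclusion can be rescued by a different argument: the maps you compare agree modulo $\m$ with a map that is already known to be $\m$-split, and perturbing a split surjection $h$ by an element of $\m\cdot\Hom$ preserves split surjectivity because composing the perturbation with the existing section yields an $S_\m$-endomorphism of $N_\m^{\oplus d}$ congruent to the identity modulo $\m$, hence an isomorphism. That argument needs $N$ finitely generated, not finitely presented, and is not the general ``reduction mod $\m$ determines split'' principle you assert. Second, finite presentation of $N$ is used only in Lemma~\ref{lemma:spl-closed}, i.e.\ to get openness of the split locus so that $\mathit{\Lambda}$ exists; it plays no role in the Nakayama-style lifts, contrary to what your write-up suggests.
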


We defer the proof of the Splitting Lemma to the end of this section.  Assuming the truth of the Splitting Lemma, we could prove the following theorem at this point, but we omit the proof since it is basically the same as the proof of Theorem~\ref{theorem:sur-prelim}.  Still, we would like to make one note about the proof.  We use the finite presentation of $N$ over $S$ more than just through the use of Lemma~\ref{lemma:spl-closed}:  When applying Part~(1) of Theorem~\ref{theorem:spl-prelim} to prove Part~(2), we use the fact that a map $g\in\Hom_S(M,N^{\oplus t})$ is split surjective over $S$ if and only if $g_{\m}$ is split surjective over $S_{\m}$ for every $\m\in\Max(R)\cap\Supp_R(N)$.

\begin{theorem}\label{theorem:spl-prelim}
Let $L$ be an $S$-submodule of $M$; let $F$ be a finitely generated $R$-submodule of $\Hom_S(L,N)$; and let $G$ be an $R$-submodule of $\Hom_S(M,N)$.  Suppose that every member of $F$ can be extended to a member of $G$.  Let $X$ be a subset of $\Supp_R(N)$ that is a basic set for $R$, and suppose that $\dim(X)<\infty$.  Then the following statements hold:
\begin{enumerate}
\item Let $t$ be a positive integer, and suppose that $\delta_{\p}(F)\geqslant t+\dim_X(\p)$ for every $\p\in X$.  Then there exists $g\in G^{\oplus t}$ such that $g_{\p}$ is split surjective over $S_{\p}$ for every $\p\in X$.
\item Suppose that $\Max(R)\cap\Supp_R(N)\subseteq X$.  Then
\[
\delta(G)\geqslant\inf\{\delta_{\p}(F)-\dim_X(\p):\p\in X\}.
\]
\end{enumerate}
\end{theorem}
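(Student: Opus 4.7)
The plan is to mirror the proof of Theorem~\ref{theorem:sur-prelim} almost verbatim, substituting $\delta$ for $\partial$, ``split surjective'' for ``surjective'', the Splitting Lemma (Lemma~\ref{lemma:spl}) for the Surjective Lemma (Lemma~\ref{lemma:sur}), and Remarks~\ref{remark:spl-infty} and~\ref{remark:spl-nXp} for Remarks~\ref{remark:infty} and~\ref{remark:nXp}. The finite presentation of $N$ over $S$, which was not needed in Theorem~\ref{theorem:sur-prelim}, will be invoked only at the very end of Part~(2), where it powers the local-global principle highlighted in the paragraph preceding the theorem.

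For Part~(1), I would reduce to the case $X \neq \varnothing$, set $n := \mu_R(F)$, fix generators $f_1, \ldots, f_n$ of $F$, and form $f := (f_1, \ldots, f_n)^\top$. Given $\q \in X$, the hypothesis $\delta_\q(F) \geqslant t$ produces a $t \times n$ matrix $B$ over $R$ with $(Bf)_\q$ split surjective, hence surjective, over $S_\q$; tensoring with $\kappa(\q)$ forces $n \geqslant t$ exactly as in the proof of Theorem~\ref{theorem:sur-prelim}. Thus $f$ is $(t,X,X)$-split, and $n - t$ applications of the Splitting Lemma yield $f'_1, \ldots, f'_t \in F$ with $f' := (f'_1, \ldots, f'_t)^\top$ still $(t,X,X)$-split; Remark~\ref{remark:spl-nXp} then upgrades this to the statement that $f'_\p$ is split surjective over $S_\p$ for every $\p \in X$. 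I would extend each $f'_i$ to a member $g_i \in G$ and set $g := (g_1, \ldots, g_t)^\top$. The crucial observation here is that for every $\p \in X$, composing any section $N_\p^{\oplus t} \to L_\p$ of $f'_\p$ with the injection $L_\p \hookrightarrow M_\p$ produces a section of $g_\p$, so $g_\p$ inherits split surjectivity over $S_\p$ for every $\p \in X$.

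For Part~(2), I would set $t := \inf\{\delta_\p(F) - \dim_X(\p) : \p \in X\}$. The cases $t \leqslant 0$ and $t = \infty$ are disposed of as in the proof of Theorem~\ref{theorem:sur-prelim}, the latter using Remark~\ref{remark:spl-infty}. When $t$ is a positive integer, Part~(1) supplies $g \in G^{\oplus t}$ such that $g_\p$ is split surjective over $S_\p$ for every $\p \in X$. For every $\m \in \Max(R) \cap \Supp_R(N) \subseteq X$, this gives splitting at $\m$ directly; for every $\m \in \Max(R) \setminus \Supp_R(N)$, we have $N_\m = 0$, so $g_\m$ splits trivially.

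The one genuine obstacle, and the sole reason for imposing finite presentation on $N$, is the final step: passing from local splitting at every maximal ideal of $R$ to global splitting over $S$. Since $N^{\oplus t}$ is finitely presented over $S$ and $S$ is module-finite over $R$, the functor $\Ext^1_S(N^{\oplus t}, -)$ commutes with localization at primes of $R$. The obstruction to splitting $g$ is the image of $1_{N^{\oplus t}}$ under the connecting map $\Hom_S(N^{\oplus t}, N^{\oplus t}) \to \Ext^1_S(N^{\oplus t}, \ker g)$; this element vanishes globally iff it vanishes after localization at every maximal ideal of $R$, and we have just verified the latter. Hence $g$ is globally split surjective over $S$, giving $\delta(G) \geqslant t$ and completing the proof.
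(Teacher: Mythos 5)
Your proposal is correct and follows essentially the same route the paper takes: mirror the proof of Theorem~\ref{theorem:sur-prelim} with $\delta$, ``split surjective,'' the Splitting Lemma, and Remarks~\ref{remark:spl-infty} and~\ref{remark:spl-nXp} substituted for their surjective counterparts, and invoke at the end the local-global principle that $g$ is split surjective over $S$ if and only if $g_\m$ is split surjective over $S_\m$ for every $\m\in\Max(R)\cap\Supp_R(N)$ --- precisely the fact the paper highlights in the paragraph preceding the theorem. Your justification of Part~(1) (composing a local section of $f'_\p$ with the inclusion $L_\p\hookrightarrow M_\p$ to obtain a section of $g_\p$) is also the natural adaptation of the original's one-line argument.

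One small caution on the final paragraph: the assertion that $\Ext^1_S(N^{\oplus t},-)$ commutes with localization at primes of $R$ is stronger than what finite presentation of $N$ over $S$ delivers when $S$ need not be coherent (one would need $N$ to be $FP_2$, not merely $FP_1$). Fortunately, you do not need it. Since $N^{\oplus t}$ is finitely presented over $S$, the functor $\Hom_S(N^{\oplus t},-)$ commutes with localization, so the cokernel $C$ of $g_*\colon \Hom_S(N^{\oplus t},M)\to\Hom_S(N^{\oplus t},N^{\oplus t})$ satisfies $C_\m\cong\coker\bigl((g_\m)_*\bigr)$ for every $\m$. Letting $c\in C$ be the image of the identity map, the local split surjectivity of $g_\m$ gives $c_\m=0$ for every $\m\in\Max(R)$, and since $Rc$ is cyclic this forces $c=0$, i.e.\ $1_{N^{\oplus t}}\in\operatorname{im}(g_*)$ and $g$ splits. (Equivalently, for finitely presented $N$ one still gets an \emph{injective} comparison map $[\Ext^1_S(N,K)]_\m\to\Ext^1_{S_\m}(N_\m,K_\m)$, which is all your argument uses.) With this tightening, the proof is complete and matches the paper's intended argument.
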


We could prove Theorem~\ref{theorem:spl-Ser} at this point, but we omit the proof since it is similar to the proof of Theorem~\ref{theorem:sur-Ser}.

As with the Surjective Lemma, we can reduce the proof of the Splitting Lemma to a consideration of a finite subset $\mathit{\Lambda}$ of $X$.  The following lemma, which is analogous to Lemma~\ref{lemma:sur-closed}, helps us reach this goal.  As we mention above, this lemma marks the main point in this section that relies on the finite presentation of $N$ over $S$.

\begin{lemma}\label{lemma:spl-closed}
Let $F$ be an $R$-submodule of $\Hom_S(M,N)$, and let $t$ be a nonnegative integer.  Then the set $\{\p\in\Spec(R):\delta_{\p}(F)>t\}$ is open, and so the set $\{\p\in\Spec(R):\delta_{\p}(F)\leqslant t\}$ is closed.  Hence, for every subspace $X$ of $\Spec(R)$, the set $Y_t:=\{\p\in X:\delta_{\p}(F)\leqslant t\}$ is closed in $X$.
\end{lemma}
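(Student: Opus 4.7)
The plan is to mimic the proof of Lemma~\ref{lemma:sur-closed}, replacing the local vanishing of a cokernel by the local vanishing of an ``error term'' that measures the failure of a globally defined map to split $f$. The reason this section's version of the lemma requires finite presentation of $N$ over $S$, rather than just finite generation, is that I want to lift a local splitting $h'\in\Hom_{S_\p}(N_\p^{\oplus(t+1)},M_\p)$ of $f_\p$ to an $S$-linear map $h\colon N^{\oplus(t+1)}\to M$; this is exactly what the standard isomorphism $\Hom_S(N^{\oplus(t+1)},M)_\p\cong\Hom_{S_\p}(N_\p^{\oplus(t+1)},M_\p)$ for the finitely presented module $N^{\oplus(t+1)}$ will provide.

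First I would fix $\p\in\Spec(R)$ with $\delta_\p(F)>t$, choose $f\in F^{\oplus(t+1)}$ with $f_\p$ split surjective over $S_\p$, and pick a splitting $h'$ of $f_\p$. By the Hom-localization isomorphism mentioned above, I can write $h'=h_\p/s$ for some $h\in\Hom_S(N^{\oplus(t+1)},M)$ and some $s\in R-\p$. Then $(f\circ h)_\p=s\cdot 1_{N_\p^{\oplus(t+1)}}$, so the $S$-linear endomorphism $g:=f\circ h-s\cdot 1_{N^{\oplus(t+1)}}$ of $N^{\oplus(t+1)}$ satisfies $g_\p=0$.

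To promote this pointwise vanishing to a whole neighborhood, I would use that $N^{\oplus(t+1)}$ is finitely generated over $R$, which holds since $S$ is module-finite over $R$ and $N$ is finitely generated over $S$: applying the standard clearing-denominators argument to the values of $g$ on a finite $R$-generating set produces $s'\in R-\p$ with $s'g=0$, i.e., $s'(f\circ h)=ss'\cdot 1_{N^{\oplus(t+1)}}$. Setting $U:=\{\q\in\Spec(R):ss'\notin\q\}$ gives an open neighborhood of $\p$ such that, for every $\q\in U$, both $s$ and $s'$ are units in $R_\q$ and hence $(h/s)_\q$ splits $f_\q$ over $S_\q$. This proves $\delta_\q(F)>t$ for every $\q\in U$ and, therefore, the desired openness; the two remaining assertions of the lemma are formal consequences. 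The main obstacle I anticipate is keeping the Hom-localization step rigorous, since this is the one place where finite presentation of $N$ is genuinely needed and where the argument diverges substantively from Lemma~\ref{lemma:sur-closed}.
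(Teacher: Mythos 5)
Your proposal is correct and follows the same approach as the paper's proof. The paper simply states that finite presentation of $N$ over $S$ yields $g\in\Hom_S(L,M)$ and $s\in R-\p$ with $f\circ g=s\cdot 1_L$ and then takes $U=D(s)$; your argument just spells out why such a $g$ and $s$ exist (Hom-localization applied to the finitely presented $S$-module $N^{\oplus(t+1)}$, followed by clearing denominators on a finite $R$-generating set), so it fills in the same derivation rather than taking a different route.
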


\begin{proof}
Let $\p\in\Spec(R)$ such that $\delta_{\p}(F)>t$; let $f\in F^{\oplus (t+1)}$ such that $f_{\p}$ is split surjective over $S_{\p}$; and let $L=N^{\oplus (t+1)}$.  Since $N$ is finitely presented over $S$, there exist $g\in\Hom_S(L,M)$ and $s\in R-\p$ such that $f\circ g=s\cdot 1_L$.  Let $U:=\{\q\in\Spec(R):s\not\in\q\}$.  Then $f_{\q}$ is split surjective over $S_{\q}$ for every $\q\in U$. Hence $U$ is an open neighborhood of $\p$ such that $\delta_{\q}(F)>t$ for every $\q\in U$.  Thus the set $\{\p\in\Spec(R):\delta_{\p}(F)>t\}$ is open.  This proves the first claim of the lemma.  The last two claims of the lemma follow from the first claim.
\end{proof}

We state the following lemma with an eye toward Theorem~\ref{theorem:spl-Bas}.  We omit the proof of this lemma since it is similar to the proof of Lemma~\ref{lemma:sur-fg}.

\begin{lemma}\label{lemma:spl-fg}
Suppose that $M$ is a direct summand of a direct sum of finitely presented right $S$-modules.  Let $X$ be a Noetherian subspace of $\Supp_R(N)$, and suppose that $\dim(X)<\infty$.  Let $t$ be a positive integer, and suppose that $\spl_{S_{\p}}(M_{\p},N_{\p})\geqslant t+\dim(X)$ for every $\p\in X$.  Then there exists a finitely generated $R$-submodule $F$ of $\Hom_S(M,N)$ such that $\delta_{\p}(F)\geqslant t+\dim(X)$ for every $\p\in X$.
\end{lemma}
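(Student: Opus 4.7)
The plan is to follow the template of Lemma~\ref{lemma:sur-fg} essentially verbatim, replacing every occurrence of $\partial$ with $\delta$ and invoking Lemma~\ref{lemma:spl-closed} in place of Lemma~\ref{lemma:sur-closed}. Set $u := (t-1) + \dim(X)$, and let $\mathscr{G}$ denote the collection of all finitely generated $R$-submodules of $\Hom_S(M,N)$. For each $G \in \mathscr{G}$, define $Y(G) := \{\p \in X : \delta_\p(G) \leqslant u\}$, and set $\mathscr{Y} := \{Y(G) : G \in \mathscr{G}\}$. The objective is to show that $\varnothing \in \mathscr{Y}$, because then any corresponding $F$ satisfies $\delta_\p(F) \geqslant u+1 = t+\dim(X)$ for every $\p \in X$.

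I would proceed by contradiction. Suppose $\varnothing \notin \mathscr{Y}$, so $X \neq \varnothing$. By Lemma~\ref{lemma:spl-closed}, each $Y(G)$ is closed in $X$; since $X$ is Noetherian and nonempty, the family $\mathscr{Y}$ admits a minimal element $Y(G')$. Choose $\q \in Y(G')$. By hypothesis, $\spl_{S_\q}(M_\q, N_\q) \geqslant t + \dim(X) = u+1$, so there exists an $S_\q$-linear split surjection $M_\q \to N_\q^{\oplus(u+1)}$. The key step is to use the hypothesis that $M$ is a direct summand of a direct sum of finitely presented right $S$-modules to produce $h_1, \ldots, h_{u+1} \in \Hom_S(M, N)$ such that $h := (h_1, \ldots, h_{u+1})^\top$ has $h_\q$ split surjective over $S_\q$. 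Granting that, letting $H := G' + Rh_1 + \cdots + Rh_{u+1} \in \mathscr{G}$ gives $\delta_\q(H) \geqslant u+1$, so $\q \in Y(G') - Y(H)$ and hence $Y(H) \subsetneq Y(G')$, contradicting the minimality of $Y(G')$. The conclusion $\varnothing \in \mathscr{Y}$ then produces the desired $F$.

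The main obstacle will be the globalization step producing $h_1, \ldots, h_{u+1}$. In the surjective case (Lemma~\ref{lemma:sur-fg}), this was dispatched with the single phrase ``by our hypothesis on $M$,'' because for any $S_\q$-linear map $M_\q \to N_\q$, the decomposition of $M$ as a summand of finitely presented modules allows one to realize the map, up to clearing a denominator in $R - \q$, as the localization of a global $S$-linear map $M \to N$. For the splitting version, I must track that the \emph{section} also globalizes: given the local section $\psi \colon N_\q^{\oplus(u+1)} \to M_\q$, I would use that $N$, and hence $N^{\oplus(u+1)}$, is finitely presented to lift $\psi$ (after multiplying by some $s \in R - \q$) to a global $S$-linear map $\tilde\psi \colon N^{\oplus(u+1)} \to M$; after also clearing denominators in the surjection, the resulting composition $h \circ \tilde\psi$ becomes multiplication by a unit of $R_\q$ upon localizing at $\q$, so $h_\q$ is split surjective. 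The remainder of the proof is then identical to that of Lemma~\ref{lemma:sur-fg}.
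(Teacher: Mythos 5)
Your overall strategy is exactly the paper's: the paper omits the proof of this lemma precisely because it intends the Noetherian-induction argument of Lemma~\ref{lemma:sur-fg} to carry over verbatim, with $\partial$ replaced by $\delta$ and Lemma~\ref{lemma:sur-closed} replaced by Lemma~\ref{lemma:spl-closed}. You have correctly identified that the one place where new content is required is the globalization step at the chosen point $\q$, and you have correctly identified the new ingredient it needs (lift the section using the finite presentation of $N^{\oplus(u+1)}$ over $S$).

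However, your explanation of that step contains a genuine error. You assert that ``for any $S_\q$-linear map $M_\q \to N_\q$, the decomposition of $M$ as a summand of finitely presented modules allows one to realize the map, up to clearing a denominator in $R-\q$, as the localization of a global map,'' and you then build the split case on this by ``clearing denominators in the surjection.'' This claim is false as stated: when $M$ is a direct summand of an \emph{infinite} direct sum $\bigoplus_i P_i$ of finitely presented modules, the canonical map $\Hom_S(M,N)_\q \to \Hom_{S_\q}(M_\q,N_\q)$ need not be surjective (already for $S=R=\ZZ$, $M=\ZZ^{(\mathbb{N})}$, $N=\ZZ$, the tuple $(1,1/p,1/p^2,\ldots)$ lies in the target but not the image of the source localized at $(p)$). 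What actually works is the following: after lifting the section $\psi$ to $\tilde\psi\colon N^{\oplus(u+1)} \to M$ with $\tilde\psi_\q = s\psi$ for some $s \in R-\q$, note that $\tilde\psi$ has finitely generated image, hence its image lands inside $M \cap Q$ for some \emph{finite} sub-sum $Q$ of the $P_i$'s. Because $Q$ is finitely presented, $\Hom_S(Q,N^{\oplus(u+1)})_\q \cong \Hom_{S_\q}(Q_\q,N_\q^{\oplus(u+1)})$, so the composite $\phi \circ \pi_\q\vert_{Q_\q}$ (where $\pi$ is the projection onto $M$) lifts to a global $\tilde\phi\colon Q \to N^{\oplus(u+1)}$ with $\tilde\phi_\q = s'\phi\pi_\q\vert_{Q_\q}$. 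Setting $h := \tilde\phi\circ\pi_Q\vert_M$, one checks $h_\q\circ\pi_\q\circ j_\q\circ(\iota\tilde\psi)_\q = ss'\cdot\mathrm{id}$ (with $j\colon Q\hookrightarrow\bigoplus_i P_i$ and $\iota\colon M\hookrightarrow\bigoplus_i P_i$), so $h_\q$ is split surjective over $S_\q$, as required. The same ``factor through a finite $Q$'' device is the correct reading of the paper's phrase ``by our hypothesis on $M$'' in the proof of Lemma~\ref{lemma:sur-fg}; you should not paraphrase it as a $\Hom$-localization isomorphism for $M$ itself.
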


We would now be in a position to prove Theorem~\ref{theorem:spl-Bas}, modulo the Splitting Lemma.  We omit the proof since it is similar to the proof of Theorem~\ref{theorem:sur-Bas}.

We would now also be able to prove the following variations of Lemma~\ref{lemma:spl-fg} and Theorems~\ref{theorem:spl-prelim} and~\ref{theorem:spl-Bas}, but we omit the proofs.  As with Lemma~\ref{lemma:sur-fg-var} and Theorem~\ref{theorem:sur-Bas-var}, the following variations are noteworthy in the sense that they do not require $M$ to be a direct summand of a direct sum of finitely presented right $S$-modules.

\begin{lemma}\label{lemma:spl-fg-var}
Let $F$ be an $R$-submodule of $\Hom_S(M,N)$.  Let $X$ be a Noetherian subspace of $\Supp_R(N)$, and suppose that $\dim(X)<\infty$.  Let $t$ be a positive integer, and suppose that $\delta_{\p}(F)\geqslant t+\dim(X)$ for every $\p\in X$.  Then there exists a finitely generated $R$-submodule $F'$ of $F$ such that $\delta_{\p}(F')\geqslant t+\dim(X)$ for every $\p\in X$. 
\end{lemma}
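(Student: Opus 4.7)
The plan is to mimic the Noetherian-induction proof of Lemma~\ref{lemma:sur-fg}, but to run it inside the submodule $F$ rather than inside $\Hom_S(M,N)$, and to use the splitting version of semicontinuity (Lemma~\ref{lemma:spl-closed}) in place of Lemma~\ref{lemma:sur-closed}.  Set $u:=t-1+\dim(X)$, and let $\mathscr{G}$ denote the collection of all finitely generated $R$-submodules of $F$.  For each $G\in\mathscr{G}$ I would set
\[
Y(G):=\{\p\in X:\delta_{\p}(G)\leqslant u\},
\]
which is closed in $X$ by Lemma~\ref{lemma:spl-closed}.  Let $\mathscr{Y}:=\{Y(G):G\in\mathscr{G}\}$.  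The problem reduces to showing $\varnothing\in\mathscr{Y}$.

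I would argue this by contradiction.  If $\varnothing\not\in\mathscr{Y}$, then in particular $X$ is nonempty, and since $X$ is Noetherian there must exist $G_0\in\mathscr{G}$ such that $Y(G_0)$ is a minimal member of $\mathscr{Y}$.  Pick any $\q\in Y(G_0)$.  The hypothesis $\delta_{\q}(F)\geqslant t+\dim(X)=u+1$ lets me choose $h_1,\ldots,h_{u+1}\in F$ for which $h:=(h_1,\ldots,h_{u+1})^{\top}\in F^{\oplus(u+1)}$ has the property that $h_{\q}$ is split surjective over $S_{\q}$.  Setting $H:=G_0+Rh_1+\cdots+Rh_{u+1}$, I get $H\in\mathscr{G}$ together with $\delta_{\q}(H)\geqslant u+1$, so that $\q\in Y(G_0)-Y(H)$ and hence $Y(H)\subsetneq Y(G_0)$, contradicting the minimality of $Y(G_0)$.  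Consequently $\varnothing\in\mathscr{Y}$, and any corresponding finitely generated $F'\subseteq F$ with $Y(F')=\varnothing$ satisfies $\delta_{\p}(F')\geqslant u+1=t+\dim(X)$ for every $\p\in X$.

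The key difference from the proof of Lemma~\ref{lemma:sur-fg} is that the hypothesis requiring $M$ to be a direct summand of a direct sum of finitely presented right $S$-modules is not needed here: in that lemma, it was invoked precisely to produce the enlarging maps $h_1,\ldots,h_{u+1}$ at the chosen point $\q$, but in the present setting they are available directly inside $F$ thanks to the quantitative hypothesis $\delta_{\q}(F)\geqslant t+\dim(X)$.  The only nonformal ingredient is the closedness of $Y(G)$ in $X$, which is exactly the content of Lemma~\ref{lemma:spl-closed} and is the unique step where the finite presentation of $N$ over $S$ is used; everything else is bookkeeping with the Noetherian-descent argument.
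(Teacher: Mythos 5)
Your proof is correct and is essentially the argument the paper intends: the paper omits this proof, saying only that it is similar to Lemma~\ref{lemma:spl-fg} (which in turn mirrors Lemma~\ref{lemma:sur-fg}), and your Noetherian-descent on the closed sets $Y(G)$, with Lemma~\ref{lemma:spl-closed} supplying closedness and the pointwise bound $\delta_{\q}(F)\geqslant u+1$ supplying the enlarging tuple $h_1,\dots,h_{u+1}\in F$, is exactly that adaptation. You also correctly identify the role of the hypotheses: the direct-summand condition on $M$ is dropped because the maps now come from $F$ itself, while the finite presentation of $N$ over $S$ is still needed (via Lemma~\ref{lemma:spl-closed}) to make each $Y(G)$ closed.
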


\begin{theorem}\label{theorem:spl-Bas-var}
Let $L$ be an $S$-submodule of $M$; let $F$ be an $R$-submodule of $\Hom_S(L,N)$; and let $G$ be an $R$-submodule of $\Hom_S(M,N)$.  Suppose that every member of $F$ can be extended to a member of $G$.  Then the following statements hold:
\begin{enumerate}
\item Let $X$ be a subset of $\Supp_R(N)$ that is a basic set for $R$ with $\dim(X)<\infty$.  Let $t$ be a positive integer, and suppose that $\delta_{\p}(F)\geqslant t+\dim(X)$ for every $\p\in X$.  Then there exists $g\in G^{\oplus t}$ such that $g_{\p}$ is split surjective over $S_{\p}$ for every $\p\in X$.
\item Suppose that $Y:=\Max(R)\cap\Supp_R(N)$ is Noetherian with $\dim(Y)<\infty$.  Then the following statements hold:
\begin{enumerate}
\item Let $t$ be a positive integer, and suppose that $\delta_{\m}(F)\geqslant t+\dim(Y)$ for every $\m\in Y$.  Then $\delta(G)\geqslant t$.
\item If $\delta_{\m}(F)=\infty$ for every $\m\in Y$, then $\delta(G)=\infty$.  Hence $\delta(F)=\infty$ if and only if $\delta_{\p}(F)=\infty$ for every $\m\in Y$.
\item Suppose that $\delta_{\n}(F)<\infty$ for some $\n\in Y$.  Then
\[
\delta(G)\geqslant\min\{\delta_{\m}(F):\m\in Y\}-\dim(Y).
\]
\end{enumerate}
\end{enumerate}
\end{theorem}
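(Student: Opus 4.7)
The plan is to mirror the proof of Theorem~\ref{theorem:sur-Bas-var}, substituting the Splitting Lemma (Lemma~\ref{lemma:spl}) for the Surjective Lemma, $\delta$ for $\partial$, and invoking Lemma~\ref{lemma:spl-fg-var}, Lemma~\ref{lemma:spl-closed}, and Theorem~\ref{theorem:spl-prelim} at the appropriate points. The finite presentation of $N$ over $S$ enters precisely through Lemma~\ref{lemma:spl-closed} (semicontinuity of $\delta_{\p}$) and through the fact that $\Hom_S(N,-)$ commutes with localization.

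For Part~(1), I would first use Lemma~\ref{lemma:spl-fg-var} to extract a finitely generated $R$-submodule $F'\subseteq F$ with $\delta_{\p}(F')\geqslant t+\dim(X)\geqslant t+\dim_X(\p)$ for every $\p\in X$. Every member of $F'$ still extends to a member of $G$ since this was already true for $F$. Part~(1) of Theorem~\ref{theorem:spl-prelim} applied to $F'$ and $G$ then yields $g\in G^{\oplus t}$ such that $g_{\p}$ is split surjective over $S_{\p}$ for every $\p\in X$.

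For Part~(2)(a), I would take $X:=j\textnormal{-}\Spec(R)\cap\Supp_R(N)$. By the corollary to Theorem~1 in~\cite{Swa}, $X$ is Noetherian with $\dim(X)=\dim(Y)$, and by Example~\ref{example:sur-X}, $X$ is a basic set for $R$. For each $\p\in X$, the inclusion $\p\supseteq\Ann_R(N)$ forces any maximal ideal $\m\supseteq\p$ to lie in $Y$; a splitting of the surjection witnessing $\delta_{\m}(F)\geqslant t+\dim(Y)$ localizes further at $\p$ (using $R_{\p}=(R_{\m})_{\p}$) to give $\delta_{\p}(F)\geqslant t+\dim(X)$. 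Part~(1) then produces $g\in G^{\oplus t}$ with $g_{\m}$ split surjective over $S_{\m}$ for every $\m\in Y$. Because $N$ is finitely presented over $S$, the functor $\Hom_S(N^{\oplus t},-)$ commutes with localization, so the cokernel $C$ of $\Hom_S(N^{\oplus t},M)\to\Hom_S(N^{\oplus t},N^{\oplus t})$ satisfies $\Supp_R(C)\subseteq\Supp_R(N)$ and is killed at every $\m\in Y$ by construction (and trivially at every $\m\in\Max(R)-\Supp_R(N)$). Hence $C_{\m}=0$ for every $\m\in\Max(R)$, so $C=0$, the identity of $N^{\oplus t}$ is in the global image, and $g$ is split surjective. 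Thus $\delta(G)\geqslant t$.

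Parts~(2)(b) and~(2)(c) follow formally from Part~(2)(a): for~(b), apply~(a) with $t$ arbitrarily large, and with $G:=F$ to also obtain the ``hence'' claim (the converse is trivial); for~(c), set $t:=\min\{\delta_{\m}(F):\m\in Y\}-\dim(Y)$ and handle the case $t\leqslant 0$ trivially, invoking~(a) when $t$ is a positive integer. The main obstacle, and the only genuine departure from the surjective story, is the last step of Part~(2)(a): upgrading local split surjectivity at every maximal ideal of $\Supp_R(N)$ to a global split surjection. This is exactly where the finite presentation of $N$ over $S$ is indispensable, in contrast to the surjective setting, where surjectivity being a local property at maximal ideals is already guaranteed by the finite generation of $N$ over $R$.
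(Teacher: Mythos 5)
Your proof is correct and follows exactly the route the paper intends but omits, namely mirroring the chain Lemma~\ref{lemma:sur-fg-var}, Theorem~\ref{theorem:sur-prelim}, Theorem~\ref{theorem:sur-Bas} with $\delta$ in place of $\partial$ and the Splitting Lemma in place of the Surjective Lemma. In particular, you correctly identified both places where the finite presentation of $N$ over $S$ is genuinely used (semicontinuity via Lemma~\ref{lemma:spl-closed} inside Lemma~\ref{lemma:spl-fg-var}, and the local-to-global upgrade of split surjectivity at the end of Part~(2)(a)), and the transfer from $\m\in Y$ to $\p\in X$ by further localization of a section is exactly the step that makes Part~(2)(a) reduce to Part~(1).
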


Let $F$ be a finitely generated $R$-submodule of $\Hom_S(M,N)$, and let $X$ be a subset of $\Supp_R(N)$ that is a basic set for $R$.  The next lemma, which can be compared to Lemma~\ref{lemma:sur-Lambda}, shows that there is a finite subset $\mathit{\Lambda}$ of $X$ that completely determines the function on $X$ taking $\p$ to $\delta_{\p}(F)$.  We omit the proof since it is basically the same as the ones for~\cite[Lemmas~3.6 and~4.2]{DSPY}.

\begin{lemma}\label{lemma:spl-Lambda}
Let $F$ be a finitely generated $R$-submodule of $\Hom_S(M,N)$, and let $X$ be a subset of $\Supp_R(N)$ that is a basic set for $R$.  Then there exists a finite subset $\mathit{\Lambda}$ of $X$ such that, for every $\p\in X-\mathit{\Lambda}$, there exists $\q\in\mathit{\Lambda}$ with the properties that $\q\subsetneq\p$ and $\delta_{\q}(F)=\delta_{\p}(F)$.
\end{lemma}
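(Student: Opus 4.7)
The plan is to mimic the argument of Lemma~\ref{lemma:sur-Lambda} and of~\cite[Lemmas~3.6 and~4.2]{DSPY}, substituting Lemma~\ref{lemma:spl-closed} for Lemma~\ref{lemma:sur-closed} so that lower semicontinuity now applies to the splitting invariant $\delta$ rather than $\partial$. The engine of the proof is the combination of lower semicontinuity of $\p\mapsto \delta_{\p}(F)$ on $X$, the Noetherianness of $X$, and the unique generic point property of irreducible closed subsets of $X$ furnished by Proposition~\ref{proposition:basic}.

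First I would observe that $\delta_{\p}(F)$ takes only finitely many values on $X$. Since $F$ is finitely generated, say by $f_{1},\ldots,f_{n}$, any $f\in F^{\oplus t}$ has the form $f=B(f_{1},\ldots,f_{n})^{\top}$ for a $t\times n$ matrix $B$ with entries in $R$. If $f_{\p}$ is split surjective onto $N_{\p}^{\oplus t}$, then in particular $B_{\p}$ induces a surjection $N_{\p}^{\oplus n}\twoheadrightarrow N_{\p}^{\oplus t}$; reducing modulo $\p R_{\p}$ yields a surjection $(N_{\p}/\p N_{\p})^{n}\twoheadrightarrow(N_{\p}/\p N_{\p})^{t}$ of $\kappa(\p)$-vector spaces. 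Since $\p\in\Supp_{R}(N)$ and $N$ is finitely generated over $R$, we have $N_{\p}/\p N_{\p}\neq 0$, and dimension counting forces $t\leqslant n$. Hence $\delta_{\p}(F)\in\{0,1,\ldots,n\}$ for every $\p\in X$.

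Next, for each integer $t\geqslant 0$, let $Y_{t}:=\{\p\in X:\delta_{\p}(F)\leqslant t\}$. By Lemma~\ref{lemma:spl-closed}, every $Y_{t}$ is closed in $X$. Because $X$ is Noetherian, each $Y_{t}$ decomposes as a finite union of irreducible closed subsets (Proposition~\ref{proposition:basic}(1)), and each such component has a unique generic point in $X$ (Proposition~\ref{proposition:basic}(2)). Let $\mathit{\Lambda}_{t}$ be the finite set of these generic points, and set $\mathit{\Lambda}:=\mathit{\Lambda}_{0}\cup\mathit{\Lambda}_{1}\cup\cdots\cup\mathit{\Lambda}_{n}$. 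Since only these finitely many values of $t$ are relevant by the previous paragraph, $\mathit{\Lambda}$ is a finite subset of $X$.

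To verify the required property, fix $\p\in X-\mathit{\Lambda}$ and set $t:=\delta_{\p}(F)$. Then $\p\in Y_{t}$, so $\p$ lies in some irreducible component $Z$ of $Y_{t}$; let $\q$ be the generic point of $Z$. Then $\q\in\mathit{\Lambda}_{t}\subseteq\mathit{\Lambda}$, and since $Z=\overline{\{\q\}}\cap X$ and $\p\in Z$, we have $\q\subseteq\p$; the inclusion is strict because $\q\in\mathit{\Lambda}$ but $\p\notin\mathit{\Lambda}$. Finally, $\q\in Y_{t}$ gives $\delta_{\q}(F)\leqslant t$, while lower semicontinuity applied to the open set $\{\mathfrak{r}\in\Spec(R):\delta_{\mathfrak{r}}(F)>t-1\}$, which contains $\p$ and therefore every generization of $\p$, yields $\delta_{\q}(F)\geqslant t$; hence equality holds. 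The main subtlety lies in establishing the uniform bound on $\delta_{\p}(F)$ over $X$, as the remainder is a standard Noetherian-topology argument, and I do not anticipate serious obstacles beyond that.
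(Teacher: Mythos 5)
Your proof is correct and follows the standard approach that the paper is implicitly referring to via the citation of~\cite[Lemmas~3.6 and~4.2]{DSPY}: establish a uniform finite bound on $\delta_\p(F)$ over $X$, invoke lower semicontinuity (Lemma~\ref{lemma:spl-closed}) to make each level set $Y_t$ closed, use Noetherianness of $X$ and Proposition~\ref{proposition:basic} to extract finitely many generic points, and then exploit that open sets are stable under generization to pin down $\delta_\q(F)=\delta_\p(F)$. The explicit bound $\delta_\p(F)\leqslant\mu_R(F)$ for $\p\in X\subseteq\Supp_R(N)$ is the step you correctly flag as the one requiring care; it is needed because an ascending chain of closed sets in a Noetherian space need not stabilize, so some argument bounding the relevant $t$ is genuinely required, and your Nakayama/dimension-count argument supplies it cleanly.
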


Using this lemma, we can prove the following analogue of Corollary~\ref{corollary:sur-prelim}.  We omit the proof on account of its similarity with the proof of Corollary~\ref{corollary:sur-prelim}.

\begin{corollary}\label{corollary:spl-prelim}
We make the following improvements to Theorems~\ref{theorem:spl-prelim} and~\ref{theorem:spl-Ser}:
\begin{enumerate}
\item Assume the hypotheses of Part~(2) of Theorem~\ref{theorem:spl-prelim}, and let $\mathit{\Lambda}$ be defined as in Lemma~\ref{lemma:spl-Lambda} with respect to $F$ and $X$.  Then
\[
\delta(G)\geqslant\inf\{\delta_{\p}(F)-\dim_X(\p):\p\in\mathit{\Lambda}\}.
\]
\item Assume the hypotheses of Theorem~\ref{theorem:spl-Ser}, and let $\mathit{\Lambda}$ be defined as in Lemma~\ref{lemma:spl-Lambda} with respect to $F:=\Hom_S(M,N)$ and $X$.  Then
\[
\spl_S(M,N)\geqslant\inf\{\spl_{S_{\p}}(M_{\p},N_{\p})-\dim_X(\p):\p\in\mathit{\Lambda}\}.
\]
\end{enumerate}
\end{corollary}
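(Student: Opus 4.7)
The plan is to mirror the proof of Corollary~\ref{corollary:sur-prelim}, replacing $\partial$ with $\delta$, $\sur$ with $\spl$, Lemma~\ref{lemma:sur-Lambda} with Lemma~\ref{lemma:spl-Lambda}, Theorem~\ref{theorem:sur-prelim} with Theorem~\ref{theorem:spl-prelim}, and Theorem~\ref{theorem:sur-Ser} with Theorem~\ref{theorem:spl-Ser}. No substantively new ideas are needed; the only subtlety is that one must know Lemma~\ref{lemma:spl-Lambda} produces a $\q\in\mathit{\Lambda}$ strictly contained in any $\p\in X-\mathit{\Lambda}$, and that strict containment inside $X$ strictly increases the local dimension $\dim_X$.

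For Part~(1), I would set
\[
t:=\inf\{\delta_{\p}(F)-\dim_X(\p):\p\in X\}\qquad\text{and}\qquad u:=\inf\{\delta_{\p}(F)-\dim_X(\p):\p\in\mathit{\Lambda}\},
\]
and show that $t=u$. The inequality $t\leqslant u$ is immediate from $\mathit{\Lambda}\subseteq X$. For the reverse, if $t=\infty$ then $t\geqslant u$ trivially; otherwise $t$ is an integer and $X\neq\varnothing$, so the infimum is attained at some $\p_0\in X$. I would argue by contradiction that $\p_0\in\mathit{\Lambda}$: if not, Lemma~\ref{lemma:spl-Lambda} delivers $\q_0\in\mathit{\Lambda}$ with $\q_0\subsetneq\p_0$ and $\delta_{\q_0}(F)=\delta_{\p_0}(F)$, and prepending $\q_0$ to any saturated chain in $\Var(\p_0)\cap X$ starting at $\p_0$ shows $\dim_X(\q_0)>\dim_X(\p_0)$; then
\[
t=\delta_{\p_0}(F)-\dim_X(\p_0)>\delta_{\q_0}(F)-\dim_X(\q_0)\geqslant t,
\]
a contradiction. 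Hence $\p_0\in\mathit{\Lambda}$, so $u\leqslant t$ and $t=u$. Part~(2) of Theorem~\ref{theorem:spl-prelim} then gives $\delta(G)\geqslant t=u$, which is exactly the claim.

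For Part~(2), I would specialize by taking $L=M$ and $G=F=\Hom_S(M,N)$. Under the hypotheses of Theorem~\ref{theorem:spl-Ser}, the ring $R$ is Noetherian and $S$ is module-finite over $R$, so $S$ is Noetherian as an $R$-module; consequently $N$ is finitely presented over $S$ (as needed throughout this section), and $F=\Hom_S(M,N)$ is finitely generated over $R$. Example~\ref{example:sur-X} then shows $X=j$-$\Spec(R)\cap\Supp_R(N)$ is a basic set for $R$ contained in $\Supp_R(N)$, and $\Max(R)\cap\Supp_R(N)\subseteq X$, so Part~(1) applies. Using the canonical identifications $\delta(G)=\spl_S(M,N)$ and $\delta_{\p}(F)=\spl_{S_{\p}}(M_{\p},N_{\p})$ (valid because $N$ is finitely presented over $S$), Part~(1) yields
\[
\spl_S(M,N)\geqslant\inf\{\spl_{S_{\p}}(M_{\p},N_{\p})-\dim_X(\p):\p\in\mathit{\Lambda}\}.
\]

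There is no real obstacle here: the argument is essentially a transcription of Corollary~\ref{corollary:sur-prelim}. The only step demanding any attention is verifying that Lemma~\ref{lemma:spl-Lambda} is actually available in Part~(2), i.e.~that $F=\Hom_S(M,N)$ is finitely generated over $R$; this follows at once from $R$ being Noetherian, $S$ being module-finite over $R$, and $M$, $N$ being finitely generated over $S$.
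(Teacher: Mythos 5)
Your proof is correct and reproduces the argument the paper has in mind: the paper explicitly omits the proof of Corollary~\ref{corollary:spl-prelim}, saying only that it is analogous to Corollary~\ref{corollary:sur-prelim}, and your write-up is exactly that translation (replace $\partial$ by $\delta$, $\sur$ by $\spl$, and cite Lemma~\ref{lemma:spl-Lambda} and Theorem~\ref{theorem:spl-prelim} in place of their surjective counterparts). One small inaccuracy in a parenthetical: the identification $\delta_{\p}(F)=\spl_{S_{\p}}(M_{\p},N_{\p})$ for $F=\Hom_S(M,N)$ rests on $[\Hom_S(M,N)]_{\p}\cong\Hom_{S_{\p}}(M_{\p},N_{\p})$, which is guaranteed by $M$ being finitely presented over $S$ (a consequence of $R$ Noetherian, $S$ module-finite, and $M$ finitely generated), not by $N$ being finitely presented; the finite presentation of $N$ is what makes Lemma~\ref{lemma:spl-closed} and hence the machinery of Section~\ref{sec:spl} available, but it is not the reason for that particular identification.
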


We now return to the task of reducing the proof of the Splitting Lemma to the study of a finite subset $\mathit{\Lambda}$ of $X$.  To this end, we present the following analogues of Lemmas~\ref{lemma:sur-del-one} and~\ref{lemma:sur-reduction}.  We omit the proofs.

\begin{lemma}\label{lemma:spl-del-one}
Let $n\in\ZZ$ with $n\geqslant 2$; let $\p\in\Supp_R(N)$; let $f:=(f_1,\ldots,f_n)^{\top}\in \Hom_S(M,N^{\oplus n})$; and let $A\in \textnormal{\textbf{GL}}(n,R)$.  Then, with respect to Definition~\ref{definition:sur-F}, we have $\delta_{\p}(F')\geqslant \delta_{\p}(F)-1$.
\end{lemma}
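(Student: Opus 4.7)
The plan is to parallel the proof of Lemma~\ref{lemma:sur-del-one}, upgrading the final use of Nakayama's Lemma so that it yields a two-sided inverse rather than merely a surjection. Setting $d := \delta_\p(F)$, the trivial case $d \leqslant 1$ gives $\delta_\p(F') \geqslant 0 \geqslant d-1$, so I would assume $d \geqslant 2$ and mimic the selection in the surjective proof: fix a $d \times n$ matrix $B$ over $R$ with $(Bf)_\p$ split surjective over $S_\p$, and fix $C \in \GL(d,R_\p)$ so that $H := CBA^{-1}$ has entries in $R$ with the first $d-1$ entries of its last column lying in $\p$. Decomposing $H$ into blocks with a $(d-1) \times (n-1)$ upper-left piece $B'$ and an upper-right column $b$ whose entries lie in $\p$, one has $B'f' \in (F')^{\oplus(d-1)}$, and the top $d-1$ rows of $Hf^*$ equal $B'f' + bf'_n$.

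Since $C_\p$ is invertible, $(Hf^*)_\p = C_\p(Bf)_\p$ is split surjective, so I would fix an $S_\p$-linear section $g \colon N_\p^{\oplus d} \to M_\p$ of $(Hf^*)_\p$ and restrict it to the first $d-1$ coordinates to obtain $g' \colon N_\p^{\oplus(d-1)} \to M_\p$. Projecting the identity $(Hf^*)_\p \circ g = \mathrm{id}$ onto the top $d-1$ components produces an equation of the form $(B'f')_\p \circ g' = \mathrm{id} - T'$ on $N_\p^{\oplus(d-1)}$, where $T' := (bf'_n)_\p \circ g'$. Because the entries of $b$ lie in $\p$, the image of $T'$ is contained in $\p R_\p \cdot N_\p^{\oplus(d-1)}$.

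The heart of the argument is to show that $\mathrm{id} - T'$ is an $S_\p$-linear automorphism of $N_\p^{\oplus(d-1)}$; granting this, the map $g' \circ (\mathrm{id} - T')^{-1}$ is an $S_\p$-linear section of $(B'f')_\p$, forcing $\delta_\p(F') \geqslant d-1$ as required. Nakayama's Lemma immediately makes $\mathrm{id} - T'$ surjective on the finitely generated $R_\p$-module $N_\p^{\oplus(d-1)}$, since it reduces to the identity modulo $\p R_\p$. The classical determinant-trick result---that a surjective endomorphism of a finitely generated module over a commutative ring is automatically injective---then upgrades surjectivity to bijectivity, and a quick check shows that the inverse of an $S_\p$-linear isomorphism is again $S_\p$-linear. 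I expect the only real subtlety here to be this last upgrade from a surjection to a two-sided $S_\p$-linear inverse, which in the surjective analogue was unnecessary because Nakayama alone sufficed.
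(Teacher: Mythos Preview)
Your proof is correct and follows the route the paper has in mind: the paper omits the proof entirely, stating only that it is analogous to Lemma~\ref{lemma:sur-del-one}, and your argument is precisely that analogue with the one necessary upgrade at the end. The extra step you flag---that $\mathrm{id}-T'$ is not just surjective but an $S_\p$-linear automorphism, so that $g'\circ(\mathrm{id}-T')^{-1}$ furnishes a genuine $S_\p$-section of $(B'f')_\p$---is exactly the point where the splitting version diverges from the surjective one, and your handling of it via Nakayama plus the surjective-endomorphism-is-bijective fact is the standard and correct way to close the gap.
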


\begin{lemma}\label{lemma:spl-reduction}
Assume the hypotheses of the Splitting Lemma, and define $\mathit{\Lambda}$ as in Lemma~\ref{lemma:spl-Lambda} with respect to $F:=Rf_1+\cdots+Rf_n$ and $X$.  Let $A\in \textnormal{\textbf{GL}}(n,R)$, and suppose that, with respect to Definition~\ref{definition:sur-F}, we have that $f'$ is $(t,X,\mathit{\Lambda})$-split.  Then $f'$ is $(t,X,X)$-split.
\end{lemma}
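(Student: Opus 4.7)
The plan is to transcribe the proof of Lemma~\ref{lemma:sur-reduction} with $\partial$ replaced by $\delta$ and ``surjective'' replaced by ``split'' throughout. Since $t$ and $X$ are fixed, I will use the terms \emph{$\p$-split} and \emph{$Y$-split} without risk of ambiguity. It suffices to show that $f'$ is $(X-\mathit{\Lambda})$-split, because the hypothesis already gives $\mathit{\Lambda}$-splitness, and the two together yield $X$-splitness.

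Fix $\p \in X - \mathit{\Lambda}$. First I invoke Lemma~\ref{lemma:spl-del-one}, the splitting analogue of Lemma~\ref{lemma:sur-del-one}, to obtain $\delta_{\p}(F') \geqslant \delta_{\p}(F) - 1$. Next, Lemma~\ref{lemma:spl-Lambda} produces some $\q \in \mathit{\Lambda}$ with $\q \subsetneq \p$ and $\delta_{\q}(F) = \delta_{\p}(F)$. Since $f$ is $(t,X,X)$-split by the Splitting Lemma's hypotheses and $\q \in X$, the tuple $f$ is $\q$-split, meaning $\delta_{\q}(F) \geqslant \min\{n,\, t + \dim_X(\q)\}$. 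Using $\dim_X(\q) \geqslant 1 + \dim_X(\p)$, which holds because $\Var(\q) \cap X$ is an irreducible closed subspace of $X$ (by Proposition~\ref{proposition:basic}(2)) that strictly contains the closed subspace $\Var(\p) \cap X$, I can chain these to get
\[
\delta_{\p}(F') \,\geqslant\, \delta_{\p}(F) - 1 \,=\, \delta_{\q}(F) - 1 \,\geqslant\, \min\{n,\, t + \dim_X(\q)\} - 1 \,\geqslant\, \min\{n-1,\, t + \dim_X(\p)\},
\]
which is exactly what it means for $f'$ to be $\p$-split.

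There is essentially no obstacle. The argument is a line-by-line transcription of the proof of Lemma~\ref{lemma:sur-reduction}, and each ingredient is already in place: Lemma~\ref{lemma:spl-del-one} supplies the ``drop by one'' estimate for $\delta$, and Lemma~\ref{lemma:spl-Lambda} supplies the reduction from the (possibly infinite) set $X$ to the finite set $\mathit{\Lambda}$, in exactly the same form as Lemma~\ref{lemma:sur-Lambda} does for $\partial$. No new ideas are required beyond systematically replacing $\partial$ with $\delta$ throughout.
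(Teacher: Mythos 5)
Your proof is correct and matches exactly the route the paper intends: the paper explicitly omits the proof of Lemma~\ref{lemma:spl-reduction} because it is a line-by-line transcription of the proof of Lemma~\ref{lemma:sur-reduction} with $\partial$ replaced by $\delta$, which is precisely what you did, including the key use of Lemmas~\ref{lemma:spl-del-one} and~\ref{lemma:spl-Lambda} and the inequality $\dim_X(\q)\geqslant 1+\dim_X(\p)$. Your added justification of that dimension inequality via Proposition~\ref{proposition:basic}(2) is a welcome explicit touch that the paper leaves implicit.
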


For the rest of this section, we assume the hypotheses of the Splitting Lemma, and we let $\mathit{\Lambda}$ be defined as in Lemma~\ref{lemma:spl-Lambda} with respect to $F:=Rf_1+\cdots+Rf_n$ and $X$.  Since $t$ and $X$ are understood, we can use the terms \textit{$\p$-split} and \textit{$Y$-split} for any $\p\in X$ and for any $Y\subseteq X$ without the risk of confusion.

Given the next two lemmas, we can find a matrix $V\in\GL(n,R)$ such that the first $n-1$ components of $Vf:=(g_1,\ldots,g_n)^{\top}$ form a map $(g_1,\ldots,g_{n-1})^{\top}$ that is $\m$-split for every $\m\in\mathit{\Lambda}\cap\Max(R)$.  The proofs of the following two lemmas are basically the same as the proofs of Lemmas~\ref{lemma:sur-L} and~\ref{lemma:sur-Max}, and so we omit them.  We make one note, however:  In the proof of Lemma~\ref{lemma:spl-Max} below, we use Lemma~\ref{lemma:sur-Q} by defining 
\[
\mathit{\Lambda}_i:=\{\m\in\mathit{\Lambda}\cap\Max(R):\mathscr{L}_{\m}=i\}
\]
for every $i\in\{1,\ldots,n\}$, where we can find an appropriate choice of $\mathscr{L}_{\m}$ for every $\m\in\mathit{\Lambda}\cap\Max(R)$ by using Lemma~\ref{lemma:spl-L}.

\begin{lemma}\label{lemma:spl-L}
Let $\m\in\mathit{\Lambda}\cap\Max(R)$.  Then there exist elements $r_{\m,1},\ldots,r_{\m,n-1}\in R$ and a number $\mathscr{L}_{\m}\in\{1,\ldots,n\}$ with the following property:  For all $s_1,\ldots,s_n\in R-\m$ and for every $n\times n$ matrix $V$ with entries in $R$ such that 
\[
V
\equiv
\begin{pmatrix}
1 & 0 & \cdots & 0 & r_{\m,1} \\
0 & \ddots & \ddots & \vdots & \vdots \\
\vdots & \ddots & \ddots & 0 & \vdots \\
0 & \cdots & 0 & 1 & r_{\m,n-1} \\
0 & \cdots & \cdots & 0 & 1 \\
\end{pmatrix}
P_{\mathscr{L}_{\m}}
\begin{pmatrix}
s_1 & 0 & \cdots & 0 & 0 \\
0 & \ddots & \ddots & \vdots & \vdots \\
\vdots & \ddots & \ddots & 0 & \vdots \\
0 & \cdots & 0 & s_{n-1} & 0 \\
0 & \cdots & \cdots & 0 & s_n \\
\end{pmatrix}
\textnormal{ (mod }\m\textnormal{)},
\]
the first $n-1$ components of $Vf:=(g_1,\ldots,g_n)^{\top}$ form a map $(g_1,\ldots,g_{n-1})^{\top}$ that is $\m$-split.
\end{lemma}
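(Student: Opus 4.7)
The plan is to follow the three-case proof of Lemma~\ref{lemma:sur-L} verbatim, replacing surjective by split surjective everywhere and substituting a split-surjective analogue of Nakayama's lemma. That analogue says: if $L$ is a finitely generated $R_\m$-module and $\phi_1, \phi_2 \colon M_\m \to L$ are $S_\m$-linear maps satisfying $\phi_1 - \phi_2 \in \m_\m \Hom_{S_\m}(M_\m, L)$, and if $\phi_1$ is split surjective with section $\sigma$, then $\phi_2 \circ \sigma = 1_L + \alpha$ for some $\alpha \in \m_\m \operatorname{End}_{S_\m}(L)$; by Nakayama applied to the finitely generated module $L$, the endomorphism $1_L + \alpha$ is an automorphism, so $\sigma \circ (1_L + \alpha)^{-1}$ is a section for $\phi_2$. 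Crucially, this uses only that $L$ is finitely generated over $R_\m$, which is why the lemma holds under the weaker assumption that $N$ is merely finitely generated over $S$.

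The same reduction as in the proof of Lemma~\ref{lemma:sur-L} allows us to assume $s_1 = \cdots = s_n = 1$, since replacing $f_i$ by $s_i f_i$ does not change $F$ after localization at $\m$. I would next set $d := \delta_\m(F)$, pick a $d \times n$ matrix $B$ over $R$ with $(Bf)_\m$ split surjective, and choose $C \in \GL(d, R_\m)$ so that $CB$ can be represented by $(b_{i,j})$ with entries in $R$ and $\overline{CB}$ is in reduced row echelon form with pivots $\overline{1}$ at positions $(i, j_i)$ for $j_1 < \cdots < j_d$; the map $(CBf)_\m = C_\m (Bf)_\m$ remains split surjective, so this replacement is harmless.

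The three-case analysis is then identical to Lemma~\ref{lemma:sur-L}: if $f' := (f_1, \ldots, f_{n-1})^\top$ is already $\m$-split, take $\mathscr{L}_\m = n$ and $r_{\m, j} = 0$ for all $j$; if $j_d \le n - 1$, take $\mathscr{L}_\m = n$, $r_{\m, j_i} = b_{i, n}$, and $r_{\m, j} = 0$ otherwise; and if $j_d = n$ and $f'$ is not $\m$-split, then Lemma~\ref{lemma:spl-del-one} combined with Remark~\ref{remark:spl-nXp} forces $d \le n - 1$ (otherwise $\delta_\m(F') \ge n - 1$ would make $f'$ $\m$-split), so there exists $k \in \{1, \ldots, n-1\} \setminus \{j_1, \ldots, j_{d-1}\}$, and we take $\mathscr{L}_\m = k$, $r_{\m, j_i} = b_{i, k}$ for $i < d$, and $r_{\m, j} = 0$ otherwise. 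In each case the same linear-algebra manipulations as in Lemma~\ref{lemma:sur-L} produce a $d \times (n-1)$ matrix $B'$ over $R$ whose action on $g := (g_1, \ldots, g_{n-1})^\top$ reduces mod $\m$ to a suitable split surjection (either $\overline{CBf}$ in the first two cases, or the row-swapped variant in the third); the split-surjective Nakayama then promotes this congruence to a split surjection at $\m$, yielding $\delta_\m(Rg_1 + \cdots + Rg_{n-1}) \ge d \ge \min\{n-1, t + \dim_X(\m)\}$, and so $g$ is $\m$-split.

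The main obstacle I anticipate is showing that splitting (rather than mere surjectivity) survives the perturbations introduced by $V$ and by the row-swap in the third case; this is exactly what the split-surjective Nakayama argument handles, and it constitutes the only substantive departure from the proof of Lemma~\ref{lemma:sur-L}.
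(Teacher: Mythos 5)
Your proposal is correct and matches the paper's intended proof: the paper omits the argument, stating only that it is ``basically the same'' as the proof of Lemma~\ref{lemma:sur-L}, and your plan is exactly that, with a split-surjective version of Nakayama's lemma replacing ordinary Nakayama at each step of the three-case analysis. Your split-Nakayama lemma is correctly formulated (perturbing a split surjection by an element of $\m_\m \Hom_{S_\m}(M_\m,L)$ preserves splitting because the resulting endomorphism $1_L + \alpha$ of the finitely generated $R_\m$-module $L$ is surjective by Nakayama, hence an $S_\m$-automorphism), and it correctly isolates why finite generation of $N$ over $S$ suffices here, consistent with the paper's remark at the start of Section~\ref{sec:spl}.
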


\begin{lemma}\label{lemma:spl-Max}
There exists a matrix $V\in \textnormal{\textbf{GL}}(n,R)$ such that the first $n-1$ components of $Vf:=(g_1,\ldots,g_n)^{\top}$ form a map $(g_1,\ldots,g_{n-1})^{\top}$ that is $\m$-split for every $\m\in \mathit{\Lambda}\cap\Max(R)$.
\end{lemma}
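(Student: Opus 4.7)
The plan is to mirror the proof of Lemma~\ref{lemma:sur-Max} almost verbatim, substituting Lemma~\ref{lemma:spl-L} for Lemma~\ref{lemma:sur-L} at the one place where splitting is distinguished from surjectivity. The crucial point is that Lemma~\ref{lemma:sur-Q} is a purely ring-theoretic/matrix-theoretic fact about $\GL(n,R)$: it does not involve $M$ or $N$ at all, and therefore it can be invoked unchanged here. So I expect no genuine obstacle; the only thing to verify is that the data produced by Lemma~\ref{lemma:spl-L} have exactly the same format as the data produced by Lemma~\ref{lemma:sur-L}, which they do (the same $r_{\m,i}$, the same $\mathscr{L}_{\m}\in\{1,\ldots,n\}$, the same governing congruence).

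First, for each $\m\in\mathit{\Lambda}\cap\Max(R)$, I would apply Lemma~\ref{lemma:spl-L} to obtain elements $r_{\m,1},\ldots,r_{\m,n-1}\in R$ and a number $\mathscr{L}_{\m}\in\{1,\ldots,n\}$ with the stated property. Next, for each $i\in\{1,\ldots,n\}$, I would set
\[
\mathit{\Lambda}_i:=\{\m\in\mathit{\Lambda}\cap\Max(R):\mathscr{L}_{\m}=i\},
\]
so that $\mathit{\Lambda}_1,\ldots,\mathit{\Lambda}_n$ are finite, pairwise disjoint subsets of $\Max(R)$. Lemma~\ref{lemma:sur-Q} then furnishes a matrix $Q\in\GL(n,R)$ and elements $s_1,\ldots,s_n\in R-\bigcup_{\m\in\mathit{\Lambda}_1\cup\cdots\cup\mathit{\Lambda}_n}\m$ such that, for every $i\in\{1,\ldots,n\}$ and every $\m\in\mathit{\Lambda}_i$, the matrix $Q$ is congruent modulo $\m$ to the product of $P_i$ with the diagonal matrix built from $s_1,\ldots,s_n$.

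Second, I would apply the Chinese Remainder Theorem to the finite set $\mathit{\Lambda}\cap\Max(R)$ to obtain $r_1,\ldots,r_{n-1}\in R$ with $r_i\equiv r_{\m,i}\pmod\m$ for every $i\in\{1,\ldots,n-1\}$ and every $\m\in\mathit{\Lambda}\cap\Max(R)$, and then form the unipotent upper-triangular matrix
\[
U:=\begin{pmatrix}
1 & 0 & \cdots & 0 & r_1 \\
0 & \ddots & \ddots & \vdots & \vdots \\
\vdots & \ddots & \ddots & 0 & \vdots \\
0 & \cdots & 0 & 1 & r_{n-1} \\
0 & \cdots & \cdots & 0 & 1 \\
\end{pmatrix}\in\GL(n,R).
\]
Setting $V:=UQ\in\GL(n,R)$, the congruence on $Q$ and the choice of $r_1,\ldots,r_{n-1}$ together force $V$ to satisfy, for each $\m\in\mathit{\Lambda}\cap\Max(R)$, precisely the congruence required by Lemma~\ref{lemma:spl-L} (with $\mathscr{L}_{\m}$ dictating which permutation matrix appears, and with $s_1,\ldots,s_n$ serving as the units modulo $\m$). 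Applying Lemma~\ref{lemma:spl-L} then yields that the first $n-1$ components of $Vf=:(g_1,\ldots,g_n)^{\top}$ form a map that is $\m$-split for every such $\m$, which is the desired conclusion. The only thing worth double-checking during the write-up is that the exponents/indices in the congruence line up exactly as in Lemma~\ref{lemma:sur-Max}; since Lemma~\ref{lemma:spl-L} was stated to match Lemma~\ref{lemma:sur-L} word-for-word (apart from replacing ``surjective'' by ``split''), this is essentially automatic.
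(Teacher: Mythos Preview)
Your proposal is correct and follows essentially the same approach as the paper: the paper explicitly says the proof of Lemma~\ref{lemma:spl-Max} is the same as that of Lemma~\ref{lemma:sur-Max}, invoking Lemma~\ref{lemma:sur-Q} with $\mathit{\Lambda}_i:=\{\m\in\mathit{\Lambda}\cap\Max(R):\mathscr{L}_{\m}=i\}$ and obtaining $\mathscr{L}_{\m}$ from Lemma~\ref{lemma:spl-L}, which is exactly what you do.
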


Lastly, we must find a matrix $V\in \GL(n,R)$ such that the first $n-1$ components of $Vf:=(g_1,\ldots,g_n)^{\top}$ form a map $g:=(g_1,\ldots,g_{n-1})^{\top}$ that is $\mathit{\Lambda}$-split.  Lemma~\ref{lemma:spl-reduction} will then tell us that $g$ is $X$-split and, hence, that we have proved the Splitting Lemma.

\begin{proof}[Proof of the Splitting Lemma]
The beginning of the proof is basically the same as the proof of the Surjective Lemma up to, and including, the point where we reduce to the case in which a matrix $B$ has a certain desirable form with $(Bf^*)_{\q}$ surjective.  Of course, here, we need $(Bf^*)_{\q}$ to be not only surjective but also split surjective over $S_{\q}$.

Let $d:=\delta_{\q}(F)$.  Since $(Bf^*)_{\q}$ is split surjective over $S_{\q}$, there exists an $S$-submodule $L$ of $M$ such that the restriction of $(Bf^*)_{\q}$ to $L_{\q}$ is an isomorphism.  We may assume, then, without loss of generality, that $M_{\q}=N_{\q}^{\oplus d}$.

It remains to find $r_1,\ldots,r_{n-1}\in J$ such that, if
\[
U
:=\begin{pmatrix}
1 & 0 & \cdots & 0 & r_1 \\
0 & \ddots & \ddots & \vdots & \vdots \\
\vdots & \ddots & \ddots & 0 & \vdots \\
0 & \cdots & 0 & 1 & r_{n-1} \\
0 & \cdots & \cdots & 0 & 1 \\
\end{pmatrix}
\]
and if $Uf^*:=(g_1,\ldots,g_n)^{\top}$, then $G:=Rg_1+\cdots+Rg_{n-1}$ satisfies $\delta_{\q}(G)=d$.  On the other hand, since we have reduced to the case in which $M_{\q}=N_{\q}^{\oplus d}$, it suffices to verify that $\partial_{\q}(G)=d$, for this will imply that $\delta_{\q}(G)=d$.  Thus, from here, we may proceed once more as in the proof of the Surjective Lemma.
\end{proof}

As with the Surjective Lemma, there is a special case that admits a stronger version of the Splitting Lemma.  The reasoning is virtually identical to the discussion preceding Corollary~\ref{corollary:sur-res}, so we simply state the result here.

\begin{corollary}\label{corollary:spl-res}
Assume the hypotheses of the Splitting Lemma.  Define $\mathit{\Lambda}$ as in Lemma~\ref{lemma:spl-Lambda} with respect to $F:=Rf_1+\cdots+Rf_n$ and $X$.  Suppose that, for every $\m\in\mathit{\Lambda}\cap\Max(R)$, one of the following conditions holds:
\begin{enumerate}
\item $|R/\m|\geqslant 2+\mu_{R_{\m}}(N_{\m})$.
\item $\mu_{R_{\m}}(N_{\m})=1$.
\end{enumerate}
(For example, we may suppose that every residue field of $R$ is infinite or that $N$ is a locally cyclic $R$-module.)  Then there exist $r_1,\ldots,r_{n-1}\in R$ such that $(f_1+r_1f_n,\ldots,f_{n-1}+r_{n-1}f_n)^{\top}$ is $(t,X,X)$-split.
\end{corollary}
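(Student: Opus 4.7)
The plan is to re-run the proof of the Splitting Lemma while sidestepping Lemma~\ref{lemma:spl-Max} entirely: at every stage we will only ever apply elementary matrices of the shape
\[
U
:=\begin{pmatrix}
1 & 0 & \cdots & 0 & r_1 \\
0 & \ddots & \ddots & \vdots & \vdots \\
\vdots & \ddots & \ddots & 0 & \vdots \\
0 & \cdots & 0 & 1 & r_{n-1} \\
0 & \cdots & \cdots & 0 & 1 \\
\end{pmatrix}
\in\GL(n,R).
\]
Matrices of this shape are closed under multiplication (their top-right $(n-1)\times 1$ blocks simply add, since the bottom-right entry of the left factor is $1$ and the bottom-left block of the right factor is zero), so a product $V$ of such matrices is again of this shape, and $Vf$ has the form $(f_1+r_1f_n,\ldots,f_{n-1}+r_{n-1}f_n,f_n)^{\top}$.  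Thus if we can produce such a $V$ with the additional property that $Vf$ is $(t,X,\mathit{\Lambda})$-split, Lemma~\ref{lemma:spl-reduction} will upgrade this to $(t,X,X)$-splitness and the corollary will follow.

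To produce $V$, list the distinct members $\q_1,\ldots,\q_m$ of $\mathit{\Lambda}$ so that $\q_\ell$ is minimal in $\{\q_1,\ldots,\q_\ell\}$ for every $\ell$, and induct on $\ell$.  At stage $\ell$ we already have an elementary $A_{\ell-1}$ of the above shape such that the first $n-1$ entries of $A_{\ell-1}f$ are $\q_i$-split for every $i<\ell$, and we need to find $r_{\ell,1},\ldots,r_{\ell,n-1}\in J_\ell:=\bigcap_{i<\ell}\q_i$ so that the corresponding elementary $U_\ell$ makes the first $n-1$ entries of $U_\ell A_{\ell-1}f$ additionally $\q_\ell$-split.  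The splitness obtained at earlier primes persists by Nakayama's Lemma, since the new scalars land in $J_\ell\subseteq\q_i$ for each $i<\ell$.

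The inductive step splits into three subcases.  If $\q_\ell\notin\Max(R)$, run the Case~1 / Case~2 argument from the proof of the Splitting Lemma verbatim; its only input is that $R/\q_\ell$ is an infinite domain.  If $\q_\ell=\m$ is maximal with $|R/\m|\geqslant 2+\mu_{R_\m}(N_\m)$, the same Case~1 / Case~2 argument still works: minimality of $\m$ in $\{\q_1,\ldots,\q_\ell\}$ forces $J_\ell\not\subseteq\m$, so the ideals $\mathscr{I}:=(sJ_\ell+\m)/\m$ and $\mathscr{J}:=(J_\ell+\m)/\m$ fill the entire field $R/\m$, and the hypothesis $|R/\m|\geqslant 2+\mu_{R_\m}(N_\m)$ is exactly the size bound needed to pick an element avoiding the bad set $\mathscr{S}_i$ (of size at most $\mu_{R_\m}(N_\m)$ in Case~1) or $\{\overline{0}\}\cup\mathscr{S}$ (of size at most $1+\mu_{R_\m}(N_\m)$ in Case~2).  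If $\q_\ell=\m$ is maximal with $\mu_{R_\m}(N_\m)=1$, invoke instead the cyclic-case technique from the inductive step of the proof of~\cite[Lemma~3.8]{DSPY}, which likewise delivers the desired $r_{\ell,1},\ldots,r_{\ell,n-1}\in J_\ell$ realizing the required elementary transformation.

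The main obstacle is the middle subcase: one must check that after reducing to the splitting setting (and in particular to $M_\m=N_\m^{\oplus d}$), the row-echelon manipulation of $B$ and the polynomial-counting argument behind Cases~1 and~2 still function over $R_\m$ even though $R/\m$ is now merely a field rather than an infinite domain, and that the Nakayama lifting from $\kappa(\m)$ to $R_\m$ still promotes the full-rank condition on $\overline{(B'\otimes I_\nu)\mathit{\Gamma}}$ to surjectivity and hence, because $M_\m=N_\m^{\oplus d}$, to split surjectivity.  The cyclic subcase is then more of a citation than a calculation, though one must verify that the scalars produced by the DSPY construction can indeed be taken in $J_\ell$.
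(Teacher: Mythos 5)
Your proposal carries out exactly the strategy the paper itself sketches (the paper defers this corollary to "the discussion preceding Corollary~\ref{corollary:sur-res}," which describes precisely the two special-case techniques you combine): fold all of $\mathit{\Lambda}$ into a single induction using only upper-unitriangular elementary matrices, handle nonmaximal primes and maximal ideals satisfying $|R/\m|\geqslant 2+\mu_{R_\m}(N_\m)$ by the Case~1/Case~2 avoidance argument, and handle maximal ideals with $\mu_{R_\m}(N_\m)=1$ by importing the inductive step of DSPY's Lemma~3.8. Your size count is correct (the exclusion sets have at most $\mu_{R_\m}(N_\m)$ and $1+\mu_{R_\m}(N_\m)$ elements, respectively, and $J_\ell\not\subseteq\m$ makes $\mathscr{I}=\mathscr{J}=R/\m$), the passage from $\partial_\m(G)=d$ to $\delta_\m(G)=d$ via the reduction $M_\m=N_\m^{\oplus d}$ matches the paper's Splitting Lemma proof, and the closure of upper-unitriangular matrices under multiplication is what justifies the specific form $(f_1+r_1f_n,\ldots,f_{n-1}+r_{n-1}f_n)^{\top}$; the only cosmetic difference from the paper's hint is that you do not list the maximal ideals of $\mathit{\Lambda}$ first, but this is harmless since the argument at each stage uses only $J_\ell\not\subseteq\q_\ell$ and $J_\ell\subseteq\q_i$ for $i<\ell$, both of which already follow from minimality.
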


It would now be possible to state improved versions of Theorems~\ref{theorem:spl-Bas},~\ref{theorem:spl-Ser},~\ref{theorem:spl-prelim}, and~\ref{theorem:spl-Bas-var} in the special case acknowledged by the previous corollary, but we omit the details.

In the next section, we consider another special case in which we can improve upon our previous results.

\section{Finitely Generated Modules over Dedekind Domains}\label{sec:char}

In this section, we characterize global surjective and splitting capacities of finitely generated modules over Dedekind domains.  We define a \textit{Dedekind domain} to be an integral domain in which every ideal is projective.  Hence we consider a field to be a Dedekind domain.  Since every ideal of a Dedekind domain is projective, every ideal is finitely generated~\cite[pages~760--762]{DF}, and so a Dedekind domain is always Noetherian.

A \textit{fractional ideal} of a Dedekind domain $R$ is an $R$-submodule $I$ of the fraction field of $R$ such that there exists a nonzero $a\in R$ with $aI$ an ideal of $R$.  Hence every fractional ideal of $R$ is isomorphic to an ideal of $R$.  We define an equivalence relation $\sim$ on the set $\mathscr{F}$ of all nonzero fractional ideals of $R$ by letting $I\sim J$ if and only if there exist nonzero $a,b\in R$ such that $aI=bJ$.  The set of all equivalence classes of $\mathscr{F}$ with respect to $\sim$ forms an abelian group under multiplication with $[I][J]:=[IJ]$ for all nonzero ideals $I$ and $J$ of $R$.  This group, denoted $\Pic(R)$, is called the \textit{Picard group of $R$} or the \textit{class group of $R$}, and its identity is $[R]$, the class of all principal fractional ideals of $R$.  See~\cite[pages~457--460]{AB};~\cite[pages~760--762]{DF}; or~\cite[pages~253--258]{Eis} for more details.

For every module $M$ over a Dedekind domain $R$, we define $\Tor_R(M)$ to be the torsion $R$-submodule of $M$.  We omit the subscript $R$ if the underlying ring is understood.  Indeed, in this section, we will often omit subscripts referring to rings in notation such as $\Ass_R(N)$, $\Supp_R(N)$, $\sur_R(M,N)$, and $\spl_{R_{\p}}(M_{\p},N_{\p})$ since, in many cases, the underlying ring will be understood.  In particular, we do not consider arbitrary module-finite algebras over Dedekind domains here.

We now recall the structure theorem for finitely generated modules over Dedekind domains.  The various parts of this theorem can be found in~\cite[pages~771 and~774]{DF}.  Alternatively, Parts~(1),~(3), and~(4) can be found in~\cite[page~463]{AB} or~\cite[pages~484--485]{Eis}, and Part~(2) can be deduced by applying the Chinese Remainder Theorem to results from~\cite[page~458]{AB} or~\cite[page~258]{Eis}.

\begin{theorem}\label{theorem:str}
Let $M$ be a finitely generated module over a Dedekind domain $R$.  Then the following statements hold:
\begin{enumerate}
\item $M\cong \Tor(M) \oplus M/\Tor(M)$.
\item $\Tor(M)$ is a direct sum of $R$-modules, each of which has the form $R/\m^i$ for some $\m\in\Spec(R)-\{0\}$ and some positive integer $i$.  This decomposition is unique up to a permutation of factors.
\item There is an alternative decomposition of $\Tor(M)$ as $(R/I_1) \oplus\cdots\oplus (R/I_u)$ for some nonzero proper ideals $I_1,\ldots,I_u$ of $R$ such that $I_1\subseteq\cdots\subseteq I_u$.  This decomposition is unique.
\item Suppose that $M\neq\Tor(M)$.  Then there is a nonzero ideal $I$ of $R$ such that $M/\Tor(M)$ $\cong R^{\oplus (r-1)}\oplus I$, where $r:=\rank(M)$.  The ideal $I$ is unique up to isomorphism. 
\end{enumerate}
\end{theorem}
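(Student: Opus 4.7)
The plan is to handle the four parts in order: first reduce to the torsion and torsion-free summands; then analyze each piece separately using the well-behaved nature of modules over a Dedekind domain.

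For Part~(1), I would first verify that $M/\Tor(M)$ is finitely generated and torsion-free over $R$. The key fact to invoke is that every finitely generated torsion-free module over a Dedekind domain is projective; this follows because such a module embeds into its rationalization $K^{\oplus r}$ (where $K$ is the fraction field of $R$), and a standard induction on rank using the structure of finitely generated submodules of $K$ shows projectivity. Once $M/\Tor(M)$ is projective, the short exact sequence
\[
0 \to \Tor(M) \to M \to M/\Tor(M) \to 0
\]
splits, yielding the decomposition.

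For Part~(4), I would proceed by induction on the rank $r$ of a finitely generated torsion-free module. The base case $r=1$ is the observation that every such module embeds in $K$ and so is isomorphic to a nonzero fractional ideal, which in turn is isomorphic to an ideal of $R$. For the inductive step, I would produce a surjection onto a rank-one quotient, use projectivity of the quotient to split off a rank-one summand, and then iterate. This yields a decomposition $I_1 \oplus \cdots \oplus I_r$ into nonzero ideals. Next, I would apply Steinitz's theorem, $I \oplus J \cong R \oplus IJ$ for nonzero ideals $I,J$ of a Dedekind domain, to absorb all but one ideal factor into free summands, giving $R^{\oplus (r-1)} \oplus I$ with $[I] = [I_1 \cdots I_r] \in \Pic(R)$. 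Uniqueness of $I$ up to isomorphism follows from the fact that $[I]$ is the Steinitz class of $M/\Tor(M)$, which is a well-defined invariant; concretely, $\wedge^r (M/\Tor(M)) \cong I$ as $R$-modules.

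For Parts~(2) and~(3), I would turn to the torsion part. Since $\Tor(M)$ is finitely generated and torsion, its support in $\Spec(R)-\{0\}$ is a finite set of maximal ideals $\m_1,\ldots,\m_k$, and there is a canonical primary decomposition $\Tor(M) = \bigoplus_{i=1}^k \Gamma_{\m_i}(\Tor(M))$, where $\Gamma_{\m}$ denotes the $\m$-torsion submodule. For each $\m$, the summand $\Gamma_{\m}(\Tor(M))$ is naturally a module over $R_{\m}$, which is a DVR, hence a PID. Applying the classical structure theorem for finitely generated modules over a PID, each localization decomposes as a direct sum of cyclic modules $R_{\m}/\m^i R_{\m} \cong R/\m^i$; this gives Part~(2), with uniqueness inherited from the PID case. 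Part~(3) then follows from Part~(2) by grouping primary summands across different primes via the Chinese Remainder Theorem: repeatedly combine one cyclic summand from each prime into a single cyclic summand $R/I$ where $I$ is the intersection of the relevant prime powers, and arrange the resulting ideals to be nested. Uniqueness of the nested chain $I_1 \subseteq \cdots \subseteq I_u$ follows from uniqueness in Part~(2).

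The main obstacle will be Part~(4), specifically the application of Steinitz's theorem and the verification that the resulting ideal $I$ is unique up to isomorphism. Proving Steinitz's theorem itself requires a nontrivial argument exploiting that any two nonzero ideals in a Dedekind domain can be generated by two elements with prescribed leading structure, and the uniqueness argument requires identifying $[I]$ intrinsically as the Steinitz class $\det(M/\Tor(M)) \in \Pic(R)$. All of the remaining parts reduce to either standard projectivity arguments over Dedekind domains or to the classical PID structure theorem applied locally, so the genuinely new input beyond the PID case is entirely concentrated in the torsion-free summand.
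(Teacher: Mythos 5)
The paper does not actually prove Theorem~\ref{theorem:str}: immediately after stating it, the author simply cites Dummit--Foote, Auslander--Buchsbaum, and Eisenbud for the various parts, since this is the classical structure theorem over a Dedekind domain. There is therefore no ``paper proof'' to compare against. Your outline is a correct reconstruction of the standard textbook argument, and the logical dependencies you identify are the right ones: Part~(1) hinges on the fact that a finitely generated torsion-free module over a Dedekind domain is projective, Part~(4) hinges on the rank-one splitting plus Steinitz's theorem $I \oplus J \cong R \oplus IJ$ with the class in $\Pic(R)$ (equivalently $\wedge^r$) as the uniqueness invariant, and Parts~(2)--(3) reduce to the DVR/PID structure theorem via primary decomposition followed by CRT. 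One small point worth being explicit about in Part~(4): when you ``produce a surjection onto a rank-one quotient,'' you want to arrange for that quotient to land inside the fraction field $K$ (e.g.\ as the image of a $K$-linear projection $K^{\oplus r} \to K$ restricted to $M/\Tor(M)$), so that it is visibly a nonzero fractional ideal and hence projective; then the kernel has rank $r-1$ and you can iterate. Similarly, in Part~(3), to get the nested chain you should define $I_j := \prod_{\m} \m^{e_{\m}^{(j)}}$ where $e_{\m}^{(j)}$ is the $j$-th largest exponent occurring among the $\m$-primary elementary divisors; this automatically yields $I_1 \subseteq \cdots \subseteq I_u$, and uniqueness is inherited from Part~(2). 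With those clarifications your argument is complete.
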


Part~(2) of the preceding theorem gives the \textit{elementary divisor decomposition} of $\Tor(M)$, and Part~(3) gives the \textit{invariant factor decomposition} of $\Tor(M)$.  If $M\neq\Tor(M)$, then the class $[I]$ of the ideal $I$ from Part~(4) is called the \textit{Steinitz class of $M$} and is denoted by $[M]$.  See~\cite[page~773]{DF}.

We collect a few more properties of Dedekind domains in the following lemma.

\begin{lemma}\label{lemma:RIJ}
Let $R$ be a Dedekind domain.  Then $R$ satisfies the following properties:
\begin{enumerate}
\item Let $I$ and $J$ be nonzero ideals of $R$.  Then $I\oplus J \cong R \oplus IJ$.
\item Let $I$ and $J$ be nonzero ideals of $R$.  Then there is a nonzero ideal $K$ of $R$ such that $I\cong KJ$.
\item Let $I$ be a nonzero ideal of $R$, and let $M$ be a cyclic torsion $R$-module.  Then there is a surjective $R$-linear map from $I$ to $M$.
\item Let $M$ be a finitely generated torsion $R$-module, and let $u$ be a positive integer.  Suppose that $\mu_{R_{\m}}(M_{\m})\leqslant u$ for every $\m\in\Ass(M)$.  Then $\mu_R(M)\leqslant u$.
\end{enumerate}
\end{lemma}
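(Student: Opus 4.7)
The four parts are largely independent, though Part~(2) serves as a convenient warm-up; I would prove them in the stated order.

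For Part~(2), since $J$ is a nonzero ideal of the Dedekind domain $R$, it is invertible, and the fractional ideal $IJ^{-1}$ admits a nonzero $d\in J$ with $dIJ^{-1}\subseteq R$. Setting $K:=dIJ^{-1}$ yields an integral ideal satisfying $KJ=dI$, and multiplication by $d$ furnishes the $R$-isomorphism $I\xrightarrow{\sim}dI=KJ$.

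For Part~(1), my plan is to reduce to the coprime case $I+J=R$ and then to split a short exact sequence. When $I+J=R$, the identity $(I\cap J)(I+J)=IJ$ (verifiable locally, since at each prime $\p$ one has $I_\p=(\pi^a)$, $J_\p=(\pi^b)$ and $\min(a,b)+\max(a,b)=a+b$) gives $I\cap J=IJ$, and the surjection $I\oplus J\to R$, $(x,y)\mapsto x+y$, has kernel isomorphic to $IJ$; since $R$ is projective, the sequence splits, yielding $I\oplus J\cong R\oplus IJ$. To reduce the general case, I would produce $\alpha\in I^{-1}$ with $\alpha\notin\p I^{-1}$ for each of the finitely many primes $\p_1,\ldots,\p_n$ dividing $J$; existence follows from the isomorphism $I^{-1}/(\prod_i\p_i)I^{-1}\cong\bigoplus_iI^{-1}/\p_iI^{-1}$ obtained from the Chinese Remainder Theorem applied to the $R$-module $I^{-1}$, each summand being a $1$-dimensional $R/\p_i$-vector space. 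Then $I':=\alpha I$ is an integral ideal, $I'\cong I$ via multiplication by $\alpha$, and $I'+J=R$; the coprime case applied to $I'$ and $J$ gives $I\oplus J\cong I'\oplus J\cong R\oplus I'J\cong R\oplus IJ$. The hardest part will be this coprime-representative construction.

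For Part~(3), write $M\cong R/J$ with $J$ a nonzero ideal, and factor $J=\prod_{i=1}^k\m_i^{e_i}$. The Chinese Remainder Theorem gives $R/J\cong\prod_i R/\m_i^{e_i}$, and I would compute
\[
I/IJ\cong I\otimes_R R/J\cong\bigoplus_i I\otimes_R R/\m_i^{e_i}.
\]
Since $I_{\m_i}$ is free of rank $1$ over the DVR $R_{\m_i}$, each factor satisfies $I\otimes_R R/\m_i^{e_i}\cong I_{\m_i}/\m_i^{e_i}I_{\m_i}\cong R/\m_i^{e_i}$, so $I/IJ\cong R/J\cong M$, and the canonical surjection $I\twoheadrightarrow I/IJ$ supplies the desired map.

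For Part~(4), apply Part~(3) of Theorem~\ref{theorem:str} to obtain the invariant factor decomposition $M\cong R/I_1\oplus\cdots\oplus R/I_v$ with $I_1\subseteq\cdots\subseteq I_v$ nonzero proper ideals; evidently $\mu_R(M)\leq v$. Since $I_v$ is proper, pick any maximal ideal $\m$ containing $I_v$; then $\m\supseteq I_j$ for every $j$, so $M_\m\cong\bigoplus_j R_\m/I_{j,\m}$ has $v$ nonzero cyclic summands, forcing $\mu_{R_\m}(M_\m)=v$. Finally, $\m\supseteq I_v$ implies $\m\in\Ass_R(R/I_v)\subseteq\Ass_R(M)$, and hence $u\geq\mu_{R_\m}(M_\m)=v\geq\mu_R(M)$.
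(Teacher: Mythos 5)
Your argument for each part is correct. For Parts (1) and (4) the paper simply delegates to references or to ``this follows from the invariant factor decomposition,'' so your contribution there is to supply the details the paper elides; both arguments are sound. In Part (1) your reduction-to-coprime-representative step (choosing $\alpha\in I^{-1}$ with $v_\p(\alpha)=-v_\p(I)$ at each prime $\p$ dividing $J$, via the module-theoretic Chinese Remainder Theorem applied to $I^{-1}$) is the standard ``move an ideal class to be coprime to a fixed ideal'' device, and the split short exact sequence $0\to IJ\to I\oplus J\to R\to 0$ in the coprime case is exactly right. In Part (4) your observation that choosing $\m\supseteq I_v$ forces all $v$ cyclic summands to survive localization, so that $\mu_{R_\m}(M_\m)=v$, and that such $\m$ lies in $\Ass(M)$, completes the argument cleanly. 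For Part (2) your construction $K=dIJ^{-1}$ with $d\in J$ is just the Picard-group argument of the paper made explicit; nothing really different. The only genuine divergence is in Part (3): you compute $I/IJ\cong I\otimes_R R/J$ and use the Chinese Remainder Theorem together with local freeness of $I$ to conclude $I\otimes_R R/\m_i^{e_i}\cong R/\m_i^{e_i}$, whereas the paper writes $I$ and $J$ as explicit products of prime powers, sets $K=IJ$, and invokes the filtration isomorphisms $\m_i^{v_i}/\m_i^{v_i+w_i}\cong R/\m_i^{w_i}$ factor by factor. Both prove $I/IJ\cong R/J$; your route is slightly more conceptual and avoids appealing to the explicit structure of $\m^a/\m^{a+b}$, at the small cost of invoking right-exactness of $\otimes$ and the identification of a module supported at a single maximal ideal with its localization there.
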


\begin{proof}
(1)  See~\cite[pages~461--462]{AB};~\cite[page~769]{DF}; or~\cite[page~484]{Eis}.

(2)  Since $\Pic(R)$ is a group, there is an ideal $K$ of $R$ such that $[I]=[K][J]=[KJ]$.  Hence $I\cong KJ$. 

(3)  The result is obvious if $M=0$, so suppose that $M\neq 0$.  Then there is a nonzero proper ideal $J$ of $R$ such that $M\cong R/J$.  From~\cite[page~765]{DF}, we learn that $I$ and $J$ can be written as $I=\m_1^{v_1}\cdots\m_u^{v_u}$ and $J=\m_1^{w_1}\cdots\m_u^{w_u}$, where $\m_1,\ldots,\m_u$ are distinct nonzero prime ideals of $R$ and where $v_1,\ldots,v_u,w_1,\ldots,w_u$ are nonnegative integers.  Now let $K:=\m_1^{v_1+w_1}\cdots\m_u^{v_u+w_u}$.  By results from~\cite[page~768]{DF}, we see that $\m_i^{v_i}/\m_i^{v_i+w_i}\cong R/\m_i^{w_i}$ for every $i\in\{1,\ldots,u\}$ and, hence, that $I/K\cong R/J$.

(4)  This follows from the invariant factor decomposition of $\Tor(M)$ in Theorem~\ref{theorem:str}.
\end{proof}

Here is our first main result on Dedekind domains:

\begin{proposition}\label{proposition:sur-Ded}
Let $M$ and $N$ be finitely generated modules over a Dedekind domain $R$; let $X:=\Ass(N)-\{0\}$; and let $t$ be a positive integer.  Then $\sur(M,N)\geqslant t$ if and only if $\sur(M_{\m},N_{\m})\geqslant t$ for every $\m\in X$ and one of the following conditions holds:
\begin{enumerate}
\item $\rank(N)=0$.
\item $\rank(M)\geqslant 1+t\cdot\rank(N)$.
\item $\rank(M)=t\cdot\rank(N)\geqslant t$, and $[M]=[N]^t$.
\end{enumerate}
Moreover, if $\sur(M,N)\geqslant t$ and (3) holds, then we have that $\sur(\Tor(M),\Tor(N))\geqslant t$ and $\sur(M,N)=~t$.
\end{proposition}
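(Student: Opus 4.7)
My plan is to establish both directions using the structure theorem for finitely generated modules over Dedekind domains (Theorem~\ref{theorem:str}) together with Theorem~\ref{theorem:sur-Bas}(1). For the forward direction, I assume $\sur(M,N) \geqslant t$ and fix a surjection $\phi \colon M \twoheadrightarrow N^{\oplus t}$. Localizing $\phi$ at each $\m \in X$ gives the local surjectivity, and tensoring with the fraction field of $R$ gives $\rank(M) \geqslant t\rank(N)$. The delicate step is showing that the boundary case $\rank(M) = t\rank(N) \geqslant t$ forces $[M] = [N]^t$. To do this I compose $\phi$ with the projection $N^{\oplus t} \twoheadrightarrow N^{\oplus t}/\Tor(N^{\oplus t})$; since the target is torsion-free, the composite factors through $M/\Tor(M)$, producing a surjection between two finitely generated projective $R$-modules of the same rank. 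Its kernel is a torsion-free submodule of rank $0$, hence zero, so $M/\Tor(M) \cong N^{\oplus t}/\Tor(N^{\oplus t})$. Taking Steinitz classes and using Lemma~\ref{lemma:RIJ}(1) iteratively to see $[N^{\oplus t}] = [N]^t$ then yields $[M] = [N]^t$.

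For the reverse direction, Case~(1) is immediate: with $\rank(N) = 0$ the module $N$ is torsion, so $Y := \Max(R) \cap \Supp(N) = \Ass(N) = X$ is finite with $\dim(Y) = 0$, and Theorem~\ref{theorem:sur-Bas}(1) applies directly. For Cases~(2) and~(3), my strategy is to peel the torsion-free part of $N^{\oplus t}$ off $M$. Writing $M/\Tor(M) \cong R^{\oplus(\rank(M)-1)} \oplus I_M$ with $[I_M] = [M]$ and $N^{\oplus t}/\Tor(N^{\oplus t}) \cong R^{\oplus(t\rank(N)-1)} \oplus I_N^{t}$ with $[I_N] = [N]$ via Theorem~\ref{theorem:str}(4) and iterated application of Lemma~\ref{lemma:RIJ}(1), I use Lemma~\ref{lemma:RIJ}(2) to produce an ideal $J$ of $R$ with $[J] = [M] \cdot [N]^{-t}$. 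Setting $r_K := \rank(M) - t\rank(N)$ and $K := R^{\oplus(r_K - 1)} \oplus J$ (with $K = 0$ when $r_K = 0$), a further application of Lemma~\ref{lemma:RIJ}(1) verifies $M/\Tor(M) \cong K \oplus (N^{\oplus t}/\Tor(N^{\oplus t}))$. Combined with Theorem~\ref{theorem:str}(1) this gives $M \cong \Tor(M) \oplus K \oplus (N^{\oplus t}/\Tor(N^{\oplus t}))$, so it suffices to produce a surjection $\Tor(M) \oplus K \twoheadrightarrow \Tor(N)^{\oplus t}$; pairing this with the identity on $N^{\oplus t}/\Tor(N^{\oplus t})$ yields the desired surjection $M \twoheadrightarrow N^{\oplus t}$.

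To build this auxiliary surjection I invoke Theorem~\ref{theorem:sur-Bas}(1) once more with the pair $(\Tor(M) \oplus K, \Tor(N))$ and $Y = X$, again finite and $0$-dimensional. The main technical obstacle, and the only step that genuinely uses the local hypothesis, is checking that $\sur((\Tor(M))_\m \oplus K_\m, (\Tor(N))_\m) \geqslant t$ for each $\m \in X$. I fix such an $\m$ and choose any surjection $\phi_\m \colon M_\m \twoheadrightarrow N_\m^{\oplus t}$ guaranteed by the hypothesis; composing with the projection onto the free summand $R_\m^{\oplus t\rank(N)}$ of $N_\m^{\oplus t}$ gives a split surjection, producing $M_\m \cong R_\m^{\oplus t\rank(N)} \oplus K_{0,\m}$ where $K_{0,\m}$ is the kernel. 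Since $K_\m$ is free of rank $r_K$ over the DVR $R_\m$, the structure theorem over $R_\m$ gives $K_{0,\m} \cong R_\m^{\oplus r_K} \oplus (\Tor(M))_\m \cong K_\m \oplus (\Tor(M))_\m$. A short diagram chase then shows that $\phi_\m$ restricts to a surjection $K_{0,\m} \twoheadrightarrow (\Tor(N))_\m^{\oplus t}$ onto the complementary torsion summand of $N_\m^{\oplus t}$, which is exactly what is required.

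For the ``Moreover'' clause: when (3) holds, the construction above has $r_K = 0$ and $K = 0$, so the same argument produces a surjection $\Tor(M) \twoheadrightarrow \Tor(N)^{\oplus t}$, giving $\sur(\Tor(M), \Tor(N)) \geqslant t$. Finally, if one had $\sur(M, N) \geqslant t+1$, the forward direction would force $\rank(M) \geqslant (t+1)\rank(N) > t\rank(N) = \rank(M)$, a contradiction; hence $\sur(M, N) = t$.
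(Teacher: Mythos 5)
Your proof is correct, and the necessity direction and the ``Moreover'' clause follow essentially the same lines as the paper: factor the surjection through the torsion-free quotient, observe that a surjection between torsion-free modules of equal rank must be an isomorphism, and then read off the Steinitz class identity $[M]=[N]^t$ via iterated applications of Lemma~\ref{lemma:RIJ}(1). Where you genuinely diverge is in the sufficiency direction for conditions~(2) and~(3). The paper writes the given local surjection $e(\m)$ as an upper-triangular $2\times 2$ block matrix with respect to the decompositions $M_\m\cong\Tor(M_\m)\oplus R_\m^{\oplus r}$ and $N_\m^{\oplus t}\cong\Tor(N_\m)^{\oplus t}\oplus R_\m^{\oplus st}$, invokes the Snake Lemma to bound $\mu_{R_\m}(\coker f(\m))\leqslant r-st$, lifts generators of these local cokernels to a global torsion submodule $C$, controls $\mu_R(C)$ with Lemma~\ref{lemma:RIJ}(4), and surjects the peeled-off torsion-free block $R^{\oplus(r-st-1)}\oplus K$ onto $C$ by hand using Lemma~\ref{lemma:RIJ}(3) --- an explicit, hands-on construction. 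You instead reduce to producing a surjection $\Tor(M)\oplus K\twoheadrightarrow\Tor(N)^{\oplus t}$, verify the local capacity at each $\m\in X$ by splitting off the free summand of $N_\m^{\oplus t}$ and applying Krull--Schmidt over the DVR $R_\m$, and then hand the gluing problem back to Theorem~\ref{theorem:sur-Bas}(1) on a zero-dimensional Noetherian $Y$. This unifies cases~(2) and~(3) in a single argument and avoids the generator-lifting bookkeeping, at the cost of depending on the paper's main globalization theorem rather than constructing the surjection from the ground up; the paper's more explicit route has the side benefit of making the gluing mechanism visible inside the Dedekind-domain setting. Both arguments are valid, and the trade-off is self-containment versus brevity.
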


\begin{proof}
Let $r:=\rank(M)$, and let $s:=\rank(N)$.

Suppose first that $\sur(M,N)\geqslant t$.  Certainly $\sur(M_{\m},N_{\m})\geqslant t$ for every $\m\in X$.  Suppose that neither (1) nor (2) holds.  Then $r=st\geqslant t$, and so $\sur(M,N)=t$.  Let $I,J$ be ideals of $R$ that represent $[M],[N]$, respectively.  Then $N^{\oplus t}/\Tor(N^{\oplus t})\cong R^{\oplus (st-1)}\oplus J^t$ by Part~(1) of Lemma~\ref{lemma:RIJ}.  Let $e$ be a surjective $R$-linear map from $M$ to $N^{\oplus t}$.  Note that $\Hom_R(\Tor(M),R^{\oplus (st-1)} \oplus J^t)=0$.  Hence there exist $R$-linear maps
\[
\begin{array}{rclcrl}
f & : & \Tor(M) & \rightarrow & \Tor(N)^{\oplus t} & , \\
g & : & R^{\oplus (st-1)} \oplus I & \rightarrow & \Tor(N)^{\oplus t} & , \\
h & : & R^{\oplus (st-1)} \oplus I & \rightarrow & R^{\oplus (st-1)} \oplus J^t & \\
\end{array}
\]
such that the following matrix represents $e$:
\[
\bordermatrix{
& \Tor(M) & R^{\oplus (st-1)} \oplus I \cr
\Tor(N)^{\oplus t} & f & g  \cr
R^{\oplus (st-1)} \oplus J^t & 0 & h  \cr
}.
\]
Clearly, $h$ is surjective.  Since $R^{\oplus (st-1)} \oplus I$ is torsion-free, $\ker h$ is torsion-free.  Since $\rank(\ker h)=\rank(M)-\rank(N^{\oplus t})=0$, we see that $\ker h=0$.  Hence $h$ is an isomorphism, and so Theorem~\ref{theorem:str} tells us that $I\cong J^t$.  Thus $[M]=[N]^t$, proving (3).  As a result, the Snake Lemma tells us that $f$ is surjective.  Hence $\sur(\Tor(M),\Tor(N))\geqslant t$.

Next suppose that $\sur(M_{\m},N_{\m})\geqslant t$ for every $\m\in X$.  If (1) holds, then clearly it is the case that $\sur(M,N)\geqslant t$. 

Suppose then that (1) does not hold but that (2) does hold.  Let $I,J$ be ideals of $R$ that represent $[M],[N]$, respectively.  By Part~(2) of Lemma~\ref{lemma:RIJ}, there exists a nonzero ideal $K$ of $R$ such that $I\cong KJ^t$.  By Part~(1) of Lemma~\ref{lemma:RIJ}, we may then write $M/\Tor(M)$ as
\[
R^{\oplus (r-1)} \oplus I \cong R^{\oplus (r-st-1)}\oplus K \oplus R^{(st-1)} \oplus J^t.
\]
If $X=\varnothing$, then immediately we see that $\sur(M,N)\geqslant t$ since $N^{\oplus t}/\Tor(N^{\oplus t})\cong R^{\oplus (st-1)}\oplus J^t$ by Part~(1) of Lemma~\ref{lemma:RIJ}.  Suppose then that $X\neq\varnothing$.  Let $\m\in X$, and let $e(\m)$ be a surjective $R$-linear map from $M_{\m}$ to $N_{\m}^{\oplus t}$.  Note that $\Hom_R(\Tor(M_{\m}),R_{\m}^{\oplus st})=0$. Hence there exist $R$-linear maps
\[
\begin{array}{rclcrl}
f(\m) & : & \Tor(M_{\m}) & \rightarrow & \Tor(N_{\m})^{\oplus t} & , \\
g(\m) & : & R_{\m}^{\oplus r} & \rightarrow & \Tor(N_{\m})^{\oplus t} & , \\
h(\m) & : & R_{\m}^{\oplus r} & \rightarrow & R_{\m}^{\oplus st} & \\
\end{array}
\]
such that the following matrix represents $e(\m)$:
\[
\bordermatrix{
& \Tor(M_{\m}) & R_{\m}^{\oplus r} \cr
\Tor(N_{\m})^{\oplus t} & f(\m) & g(\m) \cr
R_{\m}^{\oplus st} & 0 & h(\m)  \cr
}.
\]
Clearly, $h(\m)$ is surjective.  Since $R_{\m}^{\oplus st}$ is free over $R_{\m}$, we see that $\ker h(\m)\cong R_{\m}^{\oplus (r-st)}$.  As a result, the Snake Lemma tells us that $\mu_{R_{\m}}(\coker f(\m))\leqslant r-st$.  Lift a generating set of $\coker f(\m)$ to a subset $\mathscr{C}(\m)$ of $\Tor(N_{\m})^{\oplus t}\subseteq\Tor(N)^{\oplus t}$.  Let $\m_1,\ldots,\m_u$ be the distinct members of $X$, and let $C$ be the $R$-module generated by $\mathscr{C}(\m_1)\cup\cdots\cup \mathscr{C}(\m_u)$.  Then, by Part~(4) of Lemma~\ref{lemma:RIJ}, we see that $\mu_R(C)\leqslant r-st$.  Hence, by Part~(3) of Lemma~\ref{lemma:RIJ}, there is a surjective $R$-linear map from $R^{\oplus (r-st-1)}\oplus K$ to $C$.  As a result, there is a surjective $R$-linear map from $\Tor(M)\oplus R^{\oplus (r-st-1)}\oplus K$ to $\im f(\m_1) + \cdots + \im f(\m_u) + C = \Tor(N)^{\oplus t}$.  Altogether, then, we find that $\sur(M,N)\geqslant t$.

Finally, suppose that (3) holds.  If $X=\varnothing$, then immediately we see that $\sur(M,N)=t$.  Suppose then that $X\neq\varnothing$.  Let $\m\in X$, and let $e(\m)$ be a surjective $R$-linear map from $M_{\m}$ to $N_{\m}^{\oplus t}$.  Note that $\Hom_R(\Tor(M_{\m}),R_{\m}^{\oplus st})=0$. Hence there exist $R$-linear maps
\[
\begin{array}{rclcrl}
f(\m) & : & \Tor(M_{\m}) & \rightarrow & \Tor(N_{\m})^{\oplus t} & , \\
g(\m) & : & R_{\m}^{\oplus st} & \rightarrow & \Tor(N_{\m})^{\oplus t} & , \\
h(\m) & : & R_{\m}^{\oplus st} & \rightarrow & R_{\m}^{\oplus st} & \\
\end{array}
\]
such that the following matrix represents $e(\m)$:
\[
\bordermatrix{
& \Tor(M_{\m}) & R_{\m}^{\oplus st} \cr
\Tor(N_{\m})^{\oplus t} & f(\m) & g(\m) \cr
R_{\m}^{\oplus st} & 0 & h(\m)  \cr
}.
\]
Clearly, $h(\m)$ is surjective.  Since $R_{\m}^{\oplus st}$ is finitely generated over $R_{\m}$, we see that $h(\m)$ is an isomorphism.  As a result, the Snake Lemma tells us that $f(\m)$ is surjective.  Hence $\sur(\Tor(M_{\m}),\Tor(N_{\m}))\geqslant t$, and so $\sur(\Tor(M),\Tor(N))\geqslant t$.  Moreover, since $\rank(M)=\rank(N^{\oplus t})$ and since $[M]=[N]^t$, Theorem~\ref{theorem:str} and Part~(1) of Lemma~\ref{lemma:RIJ} tell us that $M/\Tor(M)\cong N^{\oplus t}/\Tor(N^{\oplus t})$.  Altogether then, $\sur(M,N)= t$.
\end{proof}

Using the previous proposition, we can characterize the global surjective capacity of a finitely generated module $M$ with a respect to a finitely generated module $N$ over a Dedekind domain $R$:  To see this, let $u$ be a positive integer.  Then $\sur(M,N)=u$ if and only if $\sur(M,N)\geqslant u$ and $\sur(M,N)<u+1$.  We can apply the previous proposition with $t=u$ to characterize the statement that $\sur(M,N)\geqslant u$, and we can apply the previous proposition with $t=u+1$ to characterize the statement that $\sur(M,N)<u+1$.  We also have the following corollary:

\begin{corollary}\label{corollary:sur-Ded}
Let $M$ and $N$ be finitely generated modules over a Dedekind domain $R$, and let $X:=\Ass(N)-\{0\}$.  Then $\sur(M,N)=0$ if and only if one of the following conditions holds:
\begin{enumerate}
\item $\sur(M_{\m},N_{\m})=0$ for some $\m\in X$.
\item $\rank(N)\geqslant 1+\rank(M)$.
\item $\rank(M)=\rank(N)\geqslant 1$, and $[M]\neq [N]$.
\end{enumerate}
\end{corollary}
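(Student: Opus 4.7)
The plan is to derive the corollary directly from Proposition~\ref{proposition:sur-Ded} applied with $t=1$, by taking the contrapositive and unpacking the resulting disjunction. Since $\sur(M,N)=0$ is equivalent to $\sur(M,N)\not\geqslant 1$, we negate the biconditional given by the proposition. The proposition says that $\sur(M,N)\geqslant 1$ is equivalent to the conjunction of two assertions: (A) $\sur(M_{\m},N_{\m})\geqslant 1$ for every $\m\in X$, and (B) at least one of the three conditions (1$'$) $\rank(N)=0$, (2$'$) $\rank(M)\geqslant 1+\rank(N)$, or (3$'$) $\rank(M)=\rank(N)\geqslant 1$ and $[M]=[N]$ holds. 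Consequently, $\sur(M,N)=0$ is equivalent to the statement that (A) fails or (B) fails.

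The failure of (A) is precisely condition~(1) of the corollary. It remains to argue that the failure of (B) is equivalent to condition~(2) or condition~(3) of the corollary. Assume (B) fails. Then $\rank(N)\geqslant 1$, we have $\rank(M)\leqslant\rank(N)$, and if $\rank(M)=\rank(N)$ then $[M]\neq [N]$. If $\rank(M)<\rank(N)$, this gives condition~(2); if $\rank(M)=\rank(N)$, then $\rank(N)\geqslant 1$ combined with $[M]\neq[N]$ gives condition~(3). Conversely, condition~(2) rules out each of (1$'$), (2$'$), (3$'$) by direct inspection of ranks, and condition~(3) does the same (with (3$'$) failing because of the Steinitz class). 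Thus (2) or (3) of the corollary is equivalent to the failure of (B), completing the equivalence.

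Since this is a purely formal manipulation of the biconditional supplied by Proposition~\ref{proposition:sur-Ded}, I do not anticipate any serious obstacle. The only step that demands a little care is bookkeeping the case $\rank(M)=\rank(N)=0$: if $\rank(N)=0$, then (1$'$) holds so (B) cannot fail on account of ranks alone, and neither (2) nor (3) of the corollary applies. In this situation $\sur(M,N)=0$ can occur only through condition~(1), which is consistent with Proposition~\ref{proposition:sur-Ded} since (B) is automatic and only (A) can fail. After this check, the proof is a short paragraph invoking the proposition and performing the case split on $\rank(M)$ versus $\rank(N)$ described above.
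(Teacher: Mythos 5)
Your proposal is correct and matches the intent of the paper exactly: the paper presents Corollary~\ref{corollary:sur-Ded} as an immediate consequence of Proposition~\ref{proposition:sur-Ded}, and your argument is precisely the formal negation with $t=1$ that the paper's surrounding remarks indicate. The case bookkeeping is handled properly, including the observation that failure of condition~(B) forces $\rank(N)\geqslant 1$, so the $\rank(M)=\rank(N)=0$ case cannot contribute via conditions~(2) or~(3).
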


Now recall the example given in the introduction to this paper:  If $R$ is a Dedekind domain with a nonprincipal ideal $I$, then $\sur(R_{\p},I_{\p})=\sur(I_{\p},R_{\p})=1$ for every $\p\in\Spec(R)$, and yet $\sur(R,I)=\sur(I,R)=0$.  Corollary~\ref{corollary:sur-Ded} extends this example to a complete characterization of the condition that $\sur(M,N)=0$ when $M$ and $N$ are finitely generated modules over a Dedekind domain $R$.

We can give a result analogous to Proposition~\ref{proposition:sur-Ded} for splitting capacities:

\begin{proposition}\label{proposition:spl-Ded}
Let $M$ and $N$ be finitely generated modules over a Dedekind domain $R$; let $X:=\Ass(N)-\{0\}$; and let $t$ be a positive integer.  Then $\spl(M,N)\geqslant t$ if and only if $\spl(M_{\m},N_{\m})\geqslant t$ for every $\m\in X$ and one of the following conditions holds:
\begin{enumerate}
\item $\rank(N)=0$.
\item $\rank(M)\geqslant 1+t\cdot\rank(N)$.
\item $\rank(M)=t\cdot\rank(N)\geqslant t$, and $[M]=[N]^t$.
\end{enumerate}
Moreover, if $\spl(M,N)\geqslant t$ and (3) holds, then $\spl(\Tor(M),\Tor(N))\geqslant t$, and $\spl(M,N)=t$.
\end{proposition}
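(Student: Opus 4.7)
The plan is to run the proof in parallel with that of Proposition~\ref{proposition:sur-Ded}, upgrading every surjection to a split surjection by appealing to the structure theorem (Theorem~\ref{theorem:str}) for finitely generated modules over a Dedekind domain. For the \emph{forward direction}, I assume $\spl(M,N) \geqslant t$, so that $M \cong N^{\oplus t} \oplus K$ for some finitely generated $R$-module $K$. Localization immediately gives $\spl(M_{\m},N_{\m}) \geqslant t$ for every $\m$, and the rank identity $\rank(M) = t\cdot\rank(N) + \rank(K)$ partitions the situation: if $\rank(N) = 0$ we are in case (1); if $\rank(K) \geqslant 1$, in case (2); otherwise $\rank(N) \geqslant 1$ with $K$ torsion, placing us in case (3). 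In case (3), the torsion-free parts give $M/\Tor(M) \cong N^{\oplus t}/\Tor(N^{\oplus t})$, and combining Theorem~\ref{theorem:str}(4) with iterated use of Part~(1) of Lemma~\ref{lemma:RIJ} yields the Steinitz class equality $[M] = [N]^t$; the torsion parts give $\Tor(M) \cong \Tor(N)^{\oplus t} \oplus \Tor(K)$, so $\spl(\Tor(M),\Tor(N)) \geqslant t$.

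For the \emph{backward direction}, I would build the required split surjection along the canonical decompositions $M = \Tor(M) \oplus M/\Tor(M)$ and $N^{\oplus t} = \Tor(N)^{\oplus t} \oplus N^{\oplus t}/\Tor(N^{\oplus t})$ supplied by Theorem~\ref{theorem:str}. For the torsion-free factor: case (1) is trivial as the codomain is zero; in case (2), letting $I, J$ represent $[M], [N]$ and choosing $K'$ with $I \cong K'J^t$ via Part~(2) of Lemma~\ref{lemma:RIJ}, Part~(1) of the same lemma gives
\[
M/\Tor(M) \cong R^{\oplus(r-st-1)} \oplus K' \oplus R^{\oplus(st-1)} \oplus J^t,
\]
in which the final two summands form a copy of $N^{\oplus t}/\Tor(N^{\oplus t})$; and in case (3), the equality $[M] = [N]^t$ forces $I \cong J^t$, so $M/\Tor(M) \cong N^{\oplus t}/\Tor(N^{\oplus t})$ directly. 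For the torsion factor, the hypothesis $\spl(M_{\m},N_{\m}) \geqslant t$ for every $\m \in X = \Ass(\Tor(N))$ restricts to $\spl(\Tor(M_{\m}),\Tor(N_{\m})) \geqslant t$ over $R_{\m}$; since each $\m$-primary torsion $R$-module is intrinsically an $R_{\m}$-module with the same $R$- and $R_{\m}$-module structure, the local splittings glue along the primary decomposition $\Tor(M) = \bigoplus_{\m} \Tor(M_{\m})$ to give $\spl(\Tor(M), \Tor(N)) \geqslant t$. Assembling the two pieces produces the global split surjection $M \to N^{\oplus t}$.

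The \emph{moreover} clause in case (3) records the torsion-part splitting already constructed; the upper bound $\spl(M,N) \leqslant t$ follows by applying Proposition~\ref{proposition:sur-Ded} with $t+1$ in place of $t$ and noting that, under $\rank(M) = t\cdot\rank(N) \geqslant t$ and $[M] = [N]^t$, none of the three alternatives of that proposition can hold, giving $\sur(M,N) \leqslant t$ and hence $\spl(M,N) \leqslant \sur(M,N) \leqslant t$.

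I expect the main obstacle to lie in the gluing step for the torsion parts: verifying rigorously that the local splittings over each $R_{\m}$ assemble into a single split surjection of $R$-modules. This hinges on the coincidence of $R$- and $R_{\m}$-module structures on $\m$-primary torsion modules, together with the canonical primary decomposition over a Dedekind domain. Once this step is secured, the torsion-free contributions fall out directly from the Picard group calculus of Lemma~\ref{lemma:RIJ}, and the remainder is routine bookkeeping.
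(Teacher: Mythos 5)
Your proposal is correct and follows essentially the same route as the paper: decompose $M$ and $N^{\oplus t}$ into torsion and torsion-free parts via Theorem~\ref{theorem:str}, handle the torsion-free factor through the Picard group calculus of Lemma~\ref{lemma:RIJ}, and handle the torsion factor by gluing the local splittings along the $\m$-primary decomposition. The only small divergence is the final bound $\spl(M,N)\leqslant t$ in case~(3), which you obtain by applying Proposition~\ref{proposition:sur-Ded} with $t+1$ whereas the paper reads it off directly from the rank equality; and you make explicit the torsion-gluing step that the paper's proof compresses into a single citation of Theorem~\ref{theorem:str}.
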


\begin{proof}
Let $r:=\rank(M)$, and let $s:=\rank(N)$.

Suppose first that $\spl(M,N)\geqslant t$.  Certainly $\spl(M_{\m},N_{\m})\geqslant t$ for every $\m\in X$.  Suppose that neither (1) nor (2) holds.  Then $r=st\geqslant t$, and so $\spl(M,N)=t$.  Let $I,J$ be ideals of $R$ that represent $[M],[N]$, respectively.  Then, by Theorem~\ref{theorem:str} and Part~(1) of Lemma~\ref{lemma:RIJ}, we see that $I\cong J^t$, and so $[M]=[N]^t$, proving (3).  Theorem~\ref{theorem:str} also implies that $\spl(\Tor(M),\Tor(N))\geqslant t$.

Next suppose that $\spl(M_{\m},N_{\m})\geqslant t$ for every $\m\in X$.  If (1) holds, then it must be the case that $\spl(M,N)\geqslant t$. 

Suppose then that (1) does not hold but that (2) does hold.  Let $I,J$ be ideals of $R$ that represent $[M],[N]$, respectively.  By Part~(2) of Lemma~\ref{lemma:RIJ}, there exists a nonzero ideal $K$ of $R$ such that $I\cong KJ^t$.  By Part~(1) of Lemma~\ref{lemma:RIJ}, we may write  $M/\Tor(M)$ as
\[
R^{\oplus (r-1)} \oplus I \cong R^{\oplus (r-st-1)}\oplus K \oplus R^{(st-1)} \oplus J^t.
\]
Now Theorem~\ref{theorem:str} implies that $\spl(M,N)\geqslant t$ since $N^{\oplus t}/\Tor(N^{\oplus t})\cong R^{\oplus (st-1)}\oplus J^t$ by Part~(1) of Lemma~\ref{lemma:RIJ}.

Finally, suppose that (3) holds.  Then Theorem~\ref{theorem:str} and Part~(1) of Lemma~\ref{lemma:RIJ} immediately imply that $\spl(\Tor(M),\Tor(N))\geqslant t$ and that $\spl(M,N)=t$.\end{proof}

In light of the previous proposition, we could now characterize global splitting capacities of finitely generated modules over a Dedekind domain in a manner similar to the case for global surjective capacities.  We omit the details.

Given the wealth of results that we have obtained on surjective and splitting capacities, we propose the following injective analogue of these concepts:

\begin{definition}\label{definition:inj}
Let $R$ be a commutative ring, $S$ an $R$-algebra, and $M$ and $N$ right $S$-modules.  We let $\inj_S(M,N)$ denote the supremum of the nonnegative integers $t$ such that there exists an injective $S$-linear map from $N^{\oplus t}$ to $M$ (the order of the modules here is correct), and we refer to $\inj_S(M,N)$ as the \textit{global injective capacity of $M$ with respect to $N$ over $S$}.

Let $\p\in\Spec(R)$.  We refer to $\inj_{S_{\p}}(M_{\p},N_{\p})$ as the \textit{local injective capacity of $M$ with respect to $N$ over $S$ at $\p$}.
\end{definition}

As with surjective and splitting capacities, we can always get an upper bound on a global injective capacity in terms of local injective capacities:
\[
\inj_S(M,N)\leqslant\inf\{\inj_{S_{\m}}(M_{\m},N_{\m}):\m\in\Max(R)\cap\Supp_R(N)\}.
\]
Can we also get a lower bound in terms of local injective capacities?  Can we characterize a given global injective capacity using local injective capacities?  We have not been able to find any answers to these questions in the existing literature, and we cannot provide results that are as far-reaching as our results regarding surjective and splitting capacities.  However, in the case of finitely generated modules over a Dedekind domain, we can provide an answer.  As with surjective and splitting capacities, we can write $\inj(M,N)$ and $\inj(M_{\p},N_{\p})$ for every $\p\in\Spec(R)$ without causing confusion concerning the underlying ring.

\begin{proposition}\label{proposition:inj-Ded}
Let $M$ and $N$ be finitely generated modules over a Dedekind domain $R$, and let $X:=\Ass(N)$.  Then
\[
\inj(M,N)=\inf\{\inj(M_{\p},N_{\p}):\p\in X\}.
\]
\end{proposition}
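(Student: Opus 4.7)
My plan is as follows. For the inequality $\inj(M,N)\leqslant\inf\{\inj(M_{\p},N_{\p}):\p\in X\}$, any injection $N^{\oplus t}\hookrightarrow M$ localizes at $\p$ to an injection $N_{\p}^{\oplus t}\hookrightarrow M_{\p}$, so $\inj(M,N)\leqslant\inj(M_{\p},N_{\p})$ for every $\p\in\Spec(R)$. For the reverse inequality, let $t:=\inf\{\inj(M_{\p},N_{\p}):\p\in X\}$. If $N=0$, then $X=\varnothing$ and both sides equal $\infty$; if $t=0$, the inequality is trivial. So assume $N\neq 0$ and $t\geqslant 1$, and aim to construct an injection $N^{\oplus t}\hookrightarrow M$.

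The idea is to invoke the structure theorem (Theorem~\ref{theorem:str}) to write $M=\Tor(M)\oplus G$ and $N=\Tor(N)\oplus F$ with $F,G$ torsion-free, and then embed each summand of $N^{\oplus t}$ into the corresponding summand of $M$. If $s:=\rank(N)\geqslant 1$, then $0\in X$ and $t\leqslant\inj(M_{(0)},N_{(0)})=\lfloor r/s\rfloor$ where $r:=\rank(M)$, so $st\leqslant r$. By Theorem~\ref{theorem:str} and Part~(1) of Lemma~\ref{lemma:RIJ}, $F^{\oplus t}\cong R^{\oplus(st-1)}\oplus J^{t}$ and $G\cong R^{\oplus(r-1)}\oplus I$ for some nonzero ideals $I,J$. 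Since $J^{t}$ embeds in $R$ and $G$ contains a free submodule of rank $r\geqslant st$ (take $R^{\oplus(r-1)}\oplus Ra$ for any nonzero $a\in I$), the module $F^{\oplus t}$ embeds in $G$. When $s=0$ this step is vacuous.

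The maximal ideals in $X$ are exactly the members of $\Ass(\Tor(N))$. For each such $\m$, an injection $\varphi_{\m}:N_{\m}^{\oplus t}\hookrightarrow M_{\m}$ exists since $t\leqslant\inj(M_{\m},N_{\m})$. Restricting $\varphi_{\m}$ to the torsion submodule yields an injection $\Tor(N)_{\m}^{\oplus t}\hookrightarrow\Tor(M)_{\m}$, using that $\Tor$ commutes with localization and that torsion elements map to torsion elements. The primary decomposition of torsion modules over a Dedekind domain (Theorem~\ref{theorem:str}) supplies internal direct-sum decompositions $\Tor(N)=\bigoplus_{\m\in\Ass(\Tor(N))}\Tor(N)_{\m}$ and likewise for $\Tor(M)$, so the local maps assemble into an injection $\Tor(N)^{\oplus t}\hookrightarrow\Tor(M)$. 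Combining the two constructions via $N^{\oplus t}=\Tor(N)^{\oplus t}\oplus F^{\oplus t}\hookrightarrow\Tor(M)\oplus G=M$ finishes the proof. The main subtlety is this final gluing step; it works precisely because over a Dedekind domain $\Tor(N)$ splits off as a summand and its primary components are supported at pairwise distinct maximal ideals, so no compatibility condition is needed between the local choices at different primes, or between the torsion and torsion-free constructions.
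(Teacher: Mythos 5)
Your proof is correct and follows essentially the same route as the paper's: decompose $M$ and $N$ via Theorem~\ref{theorem:str} into torsion and torsion-free parts, embed $J^t$ into $I$ (you route through $R$ and a principal submodule $Ra\subseteq I$, the paper multiplies $J^t$ directly into $I$ by $a$), and glue the local torsion embeddings using the primary decomposition. The paper leaves the torsion-level gluing implicit by simply citing Theorem~\ref{theorem:str}, whereas you spell it out, but the underlying argument is the same.
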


\begin{proof}
Let $r:=\rank(M)$; let $s:=\rank(N)$; and let
\[
t:=\inf\{\inj(M_{\p},N_{\p}):\p\in X\}.
\]
It is clear that $\inj(M,N)\leqslant t$, and so it remains to prove that $\inj(M,N)\geqslant t$.  If $t=0$, then there is nothing to prove.  If $t=\infty$, then $X=\varnothing$, and so $\inj(M,N)=t$.  Suppose then that $t$ is a positive integer so that $X\neq\varnothing$.  If $s=0$, then Theorem~\ref{theorem:str} immediately tells us that $\inj(M,N)\geqslant t$.  Suppose then that $s\geqslant 1$ so that $r\geqslant st\geqslant 1$.  Let $I,J$ be ideals of $R$ that represent $[M],[N]$, respectively.  Let $a\in I-\{0\}$, and let $f:J^t\rightarrow I$ be the $R$-linear map defined by letting $f(x)=ax$ for every $x\in J^t$. Then $f$ is injective, and so $\inj(I,J^t)=1$.  Thus Theorem~\ref{theorem:str} and Part~(1) of Lemma~\ref{lemma:RIJ} tell us that $\inj(M,N)\geqslant t$.
\end{proof}

To close, we mention a few more cases in which we can characterize global surjective, splitting, and injective capacities.  We have already covered some of these cases.  For example, a characterization of an infinite global surjective capacity can be found in Part~(2) of Theorem~\ref{theorem:sur-Bas}, and Part~(2) of Theorem~\ref{theorem:spl-Bas} offers a characterization of an infinite global splitting capacity.  Under the hypotheses of Part~(3) of Theorem~\ref{theorem:sur-Bas}, if $\dim(Y)=0$, then
\[
\sur_S(M,N)=\min\{\sur_{S_{\m}}(M_{\m},N_{\m}):\m\in Y\}
\]
since we always have 
\[
\sur_S(M,N)\leqslant\inf\{\sur_{S_{\m}}(M_{\m},N_{\m}):\m\in Y\}.
\]
We can draw an analogous conclusion from Part~(3) of Theorem~\ref{theorem:spl-Bas} when $\dim(Y)=0$:  In this case,
\[
\spl_S(M,N)=\min\{\spl_{S_{\m}}(M_{\m},N_{\m}):\m\in Y\}.
\]
In particular, if our underlying commutative ring is \textit{quasisemilocal} (the ring has only finitely many maximal ideals), then global surjective capacities are completely determined by local surjective capacities, and we can say the same for splitting capacities.

We now observe that, whenever we have a result concerning surjective, splitting, or injective capacities over finitely many commutative rings $R_1,\ldots,R_u$, we have an analogous result for the direct product $R_1\times\cdots\times R_u$ of these rings.  For example, since a \textit{commutative Noetherian hereditary ring} is a direct product of finitely many Dedekind domains, we can characterize global surjective, splitting, and injective capacities of finitely generated modules over any commutative Noetherian hereditary ring, given Propositions~\ref{proposition:sur-Ded},~\ref{proposition:spl-Ded}, and~\ref{proposition:inj-Ded}.  In light of our additional results on global surjective and splitting capacities over commutative quasisemilocal rings, we can characterize global surjective and splitting capacities over any direct product of finitely many commutative quasisemilocal rings and Dedekind domains.  One historically significant example of such a direct product is given by Hungerford's Theorem~\cite[Theorem~1]{Hun}:  A \textit{commutative principal ideal ring} (that is, a commutative ring in which every ideal is principal) is a direct product of finitely many quotients of principal ideal domains.  Direct products thus provide a way to extend some of the results of this paper to larger classes of rings.

\bibliographystyle{amsplain}
\bibliography{COHM-bib}

\end{document}